\newcommand{\nc}{\newcommand}
\nc{\one}{\mbox{\bf 1}}
\nc{\invtensor}{\underset{\leftarrow}{\otimes}}
\nc{\const}{\operatorname{const}}
\nc{\ad}{\operatorname{ad}}
\nc{\tr}{\operatorname{tr}}
\nc{\tp}{\operatorname{top}}
\nc{\rank}{\operatorname{rank}}
\nc{\corank}{\operatorname{corank}}
\nc{\codim}{\operatorname{codim}}
\nc{\sdim}{\operatorname{sdim}}
\nc{\mult}{\operatorname{mult}}
\nc{\fspn}{\operatorname{span}}
\nc{\Sym}{\operatorname{Sym}}
\nc{\sym}{\operatorname{sym}}
\nc{\id}{\operatorname{id}}
\nc{\Id}{\operatorname{Id}}
\nc{\Ree}{\operatorname{Re}}
\nc{\htt}{\operatorname{ht}}
\nc{\str}{\operatorname{str}}
\nc{\Ker}{\operatorname{Ker}}
\nc{\rker}{\operatorname{rKer}}
\nc{\im}{\operatorname{Im}}
\nc{\osp}{\mathfrak{osp}}
\nc{\sgn}{\operatorname{sgn}}
\nc{\F}{\operatorname{F}}
\nc{\Mod}{\operatorname{Mod}}
\nc{\DS}{\operatorname{DS}}
\nc{\Soc}{\operatorname{Soc}}
\nc{\Inj}{\operatorname{Inj}}
\nc{\Hom}{\operatorname{Hom}}
\nc{\End}{\operatorname{End}}
\nc{\supp}{\operatorname{supp}}
\nc{\Card}{\operatorname{Card}}
\nc{\Ann}{\operatorname{Ann}}
\nc{\Ind}{\operatorname{Ind}}
\nc{\COind}{\operatorname{Coind}}
\nc{\wt}{\operatorname{wt}}
\nc{\ch}{\operatorname{ch}}
\nc{\sch}{\operatorname{sch}}
\nc{\Stab}{\operatorname{Stab}}
\nc{\Sch}{{\mathcal S}\mbox{\em ch}}
\nc{\Irr}{\operatorname{Irr}}
\nc{\fspec}{\operatorname{Spec}}
\nc{\Res}{\operatorname{Res}}
\nc{\Aut}{\operatorname{Aut}}
\nc{\Ext}{\operatorname{Ext}}
\nc{\Fract}{\operatorname{Fract}}
\nc{\gr}{\operatorname{gr}}
\nc{\deff}{\operatorname{def}}
\nc{\HC}{\operatorname{HC}}
\nc{\Snow}{\operatorname{Snow}}
\nc{\red}{\operatorname{red}}
\nc{\wdchi}{\widetilde{\chi}}
\nc{\wdH}{\widetilde{H}}
\nc{\wdN}{\widetilde{N}}
\nc{\wdM}{\widetilde{M}}
\nc{\wdO}{\widetilde{O}}
\nc{\wdR}{\widetilde{R}}
\nc{\wdV}{\widetilde{V}}
\nc{\wdC}{\widetilde{C}}
\nc{\Obj}{\operatorname{Obj}}
\nc{\Dglie}{\operatorname{{\mathcal D}glie}}
\nc{\Fin}{\operatorname{{\mathcal F}in}}
\nc{\Adm}{\operatorname{\mathcal{A}dm}}
\nc{\Sg}{{\cS(\fg)}}
\nc{\Shg}{{\cS(\fhg)}}
\nc{\Ug}{{\cU(\fg)}}
\nc{\Uhg}{{\cU(\fhg)}}
\nc{\Sh}{{\cS(\fh)}}
\nc{\Uh}{{\cU(\fh)}}
\nc{\Uhh}{{\cU(\fhh)}}
\nc{\Zg}{{{\mathcal{Z}}(\fg)}}
\nc{\Vir}{{\mathcal{V}ir}}
\nc{\NS}{{\mathcal{N}S}}
\nc{\tZg}{{\widetilde{\mathcal Z}({\mathfrak g})}}
\nc{\Zk}{{\mathcal Z}({\mathfrak k})}
\newcommand{\dD}{\mathcal{D}}
\nc{\Up}{{\mathcal U}({\mathfrak p})}
\nc{\Ah}{{\mathcal A}({\mathfrak h})}
\nc{\Ag}{{\mathcal A}({\mathfrak g})}
\nc{\Ap}{{\mathcal A}({\mathfrak p})}
\nc{\Zp}{{\mathcal Z}({\mathfrak p})}
\nc{\cR}{\mathcal R}
\nc{\cS}{\mathcal S}
\nc{\cT}{\mathcal{T}}
\nc{\cC}{\mathcal C}
\nc{\cA}{\mathcal A}
\nc{\cB}{\mathcal B}
\nc{\cU}{\mathcal U}
\nc{\cZ}{\mathcal Z}
\nc{\cM}{\mathcal M}
\nc{\cL}{\mathcal L}
\nc{\cF}{\mathcal F}
\nc{\fg}{\mathfrak g}
\nc{\cK}{\mathcal{K}}
\nc{\CO}{\mathcal O}
\nc{\CR}{\mathcal R}
\nc{\cW}{\mathcal{W}}
\nc{\bM}{\mathbf{M}}
\nc{\bL}{\mathbf{L}}
\nc{\bN}{\mathbf{N}}
\nc{\zq}{\mathpzc q}
\nc{\fo}{\mathfrak o}
\nc{\fl}{\mathfrak l}
\nc{\fn}{\mathfrak n}
\nc{\fm}{\mathfrak m}
\nc{\fp}{\mathfrak p}
\nc{\fh}{\mathfrak h}
\nc{\ft}{\mathfrak t}
\nc{\fk}{\mathfrak k}
\nc{\fb}{\mathfrak b}
\nc{\fs}{\mathfrak s}
\nc{\fB}{\mathfrak B}
\nc{\vareps}{\varepsilon}
\nc{\varesp}{\varepsilon}
\nc{\veps}{\varepsilon}
\nc{\fsl}{\mathfrak{sl}}
\nc{\fgl}{\mathfrak{gl}}
\nc{\fso}{\mathfrak{so}}
\nc{\fosp}{\mathfrak{osp}}
\nc{\fsp}{\mathfrak{sp}}
\nc{\fq}{\mathfrak q}
\nc{\fsq}{\mathfrak{sq}}
\nc{\fpsl}{\mathfrak{psl}}
\nc{\fhg}{\hat{\fg}}
\nc{\fhn}{\hat{\fn}}
\nc{\fhh}{\hat{\fh}}
\nc{\fhb}{\hat{\fb}}
\nc{\hrho}{\hat{\rho}}
\nc{\hsl}{\hat{\fsl}}
\nc{\fpo}{\mathfrak{po}}
\nc{\dirlim}{\underset{\rightarrow}{\lim}\,}
\nc{\nen}{\newenvironment}
\nc{\ol}{\overline}
\nc{\ul}{\underline}
\nc{\ra}{\rightarrow}
\nc{\lra}{\longrightarrow}
\nc{\Lra}{\Longrightarrow}
\nc{\bo}{\bar{1}}
\nc{\Lla}{\Longleftarrow}
\nc{\Llra}{\Longleftrightarrow}
\nc{\thla}{\twoheadleftarrow}
\nc{\lang}{(}
\nc{\rang}{)}
\nc{\hra}{\hookrightarrow}
\nc{\iso}{\overset{\sim}{\lra}}
\nc{\ssubset}{\underset{\not=}{\subset}}
\nc{\vac}{|0\rangle}
\nc{\Thm}[1]{Theorem~\ref{#1}}
\nc{\Prop}[1]{Proposition~\ref{#1}}
\nc{\Lem}[1]{Lemma~\ref{#1}}
\nc{\Cor}[1]{Corollary~\ref{#1}}
\nc{\Conj}[1]{Conjecture~\ref{#1}}
\nc{\Claim}[1]{Claim~\ref{#1}}
\nc{\Defn}[1]{Definition~\ref{#1}}
\nc{\Exa}[1]{Example~\ref{#1}}
\nc{\Rem}[1]{Remark~\ref{#1}}
\nc{\Note}[1]{Note~\ref{#1}}
\nc{\Quest}[1]{Question~\ref{#1}}
\nc{\Hyp}[1]{Hypoth\`ese~\ref{#1}}
\begin{document}
\setcounter{section}{-1}
\setcounter{tocdepth}{1}

\title[Snowflake modules]{Snowflake modules and  Enright functor
for Kac-Moody superalgebras}
\author{Maria Gorelik,  Vera Serganova }

\address[]{Dept. of Mathematics, The Weizmann Institute of Science,Rehovot 7610001, Israel}
\email{maria.gorelik@weizmann.ac.il}

\address[]{Dept. of Mathematics,
University of California at Berkeley, Berkeley CA 94720}
\email{serganov@math.berkeley.edu}

\thanks{Supported in part by BSF Grant 2012227. The second author was
  partially supported by NSF grant DMS-1701532.}

\begin{abstract} We introduce a  class of
modules over Kac-Moody superalgebras; we call these modules
  snowflake. These modules are characterized by invariance property
of their characters with respect to a certain subgroup of the Weyl
group.  Examples of snowflake modules appear as admissible modules
in representation theory
of affine vertex algebras and in classification of bounded weight
modules. Using these modules we prove Arakawa's Theorem for the Lie superalgebra
$\mathfrak{osp}(1|2\ell)^{(1)}$.

\end{abstract}
\maketitle

\section{Introduction}
The main goal of this paper is introduction of certain class of
modules over Kac-Moody superalgebras; we call these modules {\em
  snowflake}. These modules are characterized by invariance property
of their characters with respect to a suitable subgroup of the Weyl
group. Examples of snowflake modules appear as ``admissible modules''
in representation theory
of affine vertex algebras and in classification of bounded weight
modules over $\mathfrak{osp}(m|2n)$~\cite{GG}.
According to Arakawa's Theorem~\cite{AM},\cite{A}
if $k$ is an admissible level, then
the simple vertex affine algebra $V_k(\fg)$
is ``rational in the category $\CO$''; in this case
all $V_k(\fg)$-modules in the category $\CO$
are certain snowflake modules. One of the results of our paper is
a proof of Arakawa's Theorem for the Lie superalgebra
$\mathfrak{osp}(1|2\ell)^{(1)}$.

Recall that the Kac-Moody superalgebras $\fg(A)$ with an
indecomposable Cartan matrix $A$
can be divided in two classes:
those, where $a_{ii}=2$ for each $i$ (we call them
{\em non-isotropic type}) and others (which we call
{\em isotropic type}). The main difference between these two classes
is that the superalgebras of isotropic type
have several Cartan matrices and the Weyl group
should be extended to the Weyl groupoid using so-called odd
reflections. In~\cite{H} the superalgebras of
isotropic type are classified; in the symmetrizable case
these algebras are either finite-dimensional or (twisted)
affinizations of finite-dimensional superalgebras.
In Section~\ref{sect1} we recall the construction
of Kac-Moody superalgebra
and results of~\cite{DGK} and~\cite{KK}, which provide a description of simple modules
in the blocks of the category $\CO$ in the symmetrzable case (see~\ref{blocks}).

In this paper we study snowflake modules for symmetrizable
non-isotropic type superalgebras. In this case we prove
an analogue of Weyl-Kac character formula for non-critical
simple snowflake modules and show that the corresponding category
is semisimple.

Our main tool is the Enright functor introduced in~\cite{En}
and adapted to superalgebra setting in~\cite{IK}. Our approach to Enright functor is similar to~\cite{D};
another approach is developed in~\cite{J}.
We consider an arbitrary superalgebra $\fg$ containing
$\fsl_2$ which acts locally finitely on $\fg$. We fix the standard basis
$\{e,h,f\}$ of $\fsl_2$ and denote by $\cM(a)^e$ the category of
$\fg$-modules on which $e$ acts locally finitely and $h$ acts
diagonally with eigenvalues in $a+\mathbb Z$. The Enright functor
$\cC_a:\cM(a)^e\to \cM(-a)^e$ is the composition of twisted
localization with respect to $f$ and the Zuckerman functor which maps
a module to its  $e$-locally finite part. (In this way we avoid
superbinomial coefficients introduced in~\cite{IK}.) The Enright
functor commutes with the restriction to any subalgebra of $\fg$
containing $\fsl_2$. If $a\not\in\mathbb Z$, then $\cC_a$ is an
equivalence of categories. Moreover, in this case the character of $N$
and $\cC_a(N)$ are related by a simple formula, see~\Thm{thmchar}.
Let us note that all our results hold for supercharacters, see~\ref{Nnu}.

In the last section we discuss snowflake modules for isotropic type
superalgebras.
We hope that snowflake modules might provide a suitable framework for generalization
of
Arakawa's Theorem for these superalgebras.

The authors are grateful to T.~Arakawa, J.~Bernstein, A.~Joseph
and V.~Kac for helpful discussions.

\section{Kac--Moody Lie superalgebras}\label{sect1}
\subsection{Construction}\label{contra}
We start from the following data: an $\ell\times\ell$  complex matrix $A$
and a parity map $p:\{1,\ldots,\ell\}\to\mathbb{Z}_2$ with the following condition:

(A00) $a_{ij}=0$ implies $a_{ji}=0$;

(A0) if $p(i)=0$, then $a_{ii}=2$ and $a_{ij}\in\mathbb{Z}_{\leq 0}$ for $j\not=i$;

(A1) if $p(i)=1$, then either $a_{ii}=2$ and $a_{ij}\in
2\mathbb{Z}_{\leq 0}$ for $j\not=i$
or $a_{ii}=0$.

Let $(\fh,\Sigma,\Sigma^{\vee})$ be the realization of $A$ in the
sense of~\cite{Kbook2}, Ch. 1: $\fh$ is an even vector space with
$\dim\fh=2\ell- rank A$,
$\Sigma=\{\alpha_i\}_{i=1}^{\ell}\subset \fh^*$
and $ \Sigma^{\vee}=\{\alpha_i^{\vee}\}_{i=1}^{\ell}\subset\fh$
are linearly independent sets satisfying $\alpha_j(\alpha_i^{\vee})=a_{ij}$
for $i,j=1,\ldots,\ell$.
We set $p(\alpha_i):=p(i)$  and
extend $p:\Sigma \to\mathbb{Z}_2$ to $p:\mathbb{Z}\Sigma\to\mathbb{Z}_2$
by linearity; we  call $\nu \in \mathbb{Z}\Sigma$ even (resp., odd)
if $p(\nu)=0$ (resp., $p(\nu)=1$).

For each $i$ such that $p(i)=1$ and $a_{ii}=0$ we define an
{\it odd reflection} of our data: a  new set
$\Sigma'=r_{\alpha_i}(\Sigma):=\{\alpha'_1,\dots,\alpha_n'\}$ by the formula
$$\alpha_j'=\begin{cases}\alpha_j\,\text{if}\, j\neq i,\,a_{ij}=0\\
  \alpha_i+\alpha_j\,\text{if}\, j\neq i,\,a_{ij}\neq 0\\ -\alpha_j\,\text{if}\, j= i,\end{cases}$$
 a new set $(\Sigma')^\vee$ by
 $$(\alpha_j')^\vee=\begin{cases}\alpha_j^\vee\,\text{if}\,\,a_{ij}=0\\
   a_{ij}\alpha_j^\vee+a_{ji}\alpha_i^\vee\,\text{if}\,\,a_{ij}\neq 0,\,a_{jj}=2,\,\,a_{ji}=-1\\
    \frac{a_{ij}\alpha_j^\vee+a_{ji}\alpha_i^\vee}{a_{ij}a_{ji}}\,\text{if}\,\,a_{ij}\neq 0,\,a_{jj}=0\\
   \frac{a_{ij}\alpha_j^\vee+a_{ji}\alpha_i^\vee}{a_{ij}(a_{ji}+1)}\,\text{if}\,\,a_{ij}\neq 0,\,a_{jj}=2,\,a_{ji}\neq -1,\end{cases}$$
a new matrix $A'=r_i(A)$ by $a_{jk}' =\alpha_k'((\alpha_j')^{\vee})$,
and a new parity map
$p'$ by
$$p'(j)=\begin{cases}p(j)\,\,\text{if}\,\, a_{ij}=0\\ p'(j)+1\,\,\text{if}\,\,   a_{ij}\not=0.\end{cases}$$

We say that $(A,p)$ is a {\em Cartan supermatrix} if all pairs obtained from $(A,p)$
by chains of odd reflections satisfy (A00), (A0), (A1). In what follows we always assume that
$(A,p)$ is a Cartan supermatrix. Two supermatrices $(A,p)$ and $(A',p')$ are equivalent if one is
obtained from another by a chain of odd reflections.

Let $\tilde{\fg}(A)=\tilde{\fn}_+\oplus\fh\oplus\tilde{\fn}_-$ be a Lie superalgebra
generated by $\fh, e_i,f_i$ with $i=1,\ldots,\ell$ with $p(e_i)=p(f_i)=p(i)$
subject to the relations
$$[e_i,f_j]=\delta_{ij}\alpha_i^{\vee},\  [h,e_i]=\alpha_i(h)e_i,\
[h,f_i]=-\alpha_i(h)f_i$$
and $[e_i,e_i]=[f_i,f_i]=0$ if $p(i)=1$ with $a_{ii}=0$. We set $e_{\alpha_i}:=e_i$
and $f_{\alpha_i}:=f_i$.

A {\it Kac--Moody superalgebra} $\fg(A)$ is the quotient of
$\tilde{\fg}(A)$
by the maximal ideal which intersects $\fh$ trivially.
We identify $\fh$ with its image in $\fg(A)$ and call $\fh$ the Cartan subalgebra of $\fg(A)$.

Denote by $\cB$ the set of all bases obtained from $\Sigma$ by
odd reflections.  If $\Sigma'\in\cB$ and $A'$ is the corresponding matrix, then
$\fg(A)$ is isomorphic to $\fg(A')$, \cite{S3}.
(In other words, equivalent supermatrices define isomorphic
superalgebras.)

We denote by $\Delta$ the set of roots of $\fg(A)$ and by
$\Delta_{\ol{0}}$ (resp., by $\Delta_{\ol{1}}$) the set of even (resp., odd) roots.
{For $\Sigma\in\cB$ and $\alpha\in\Sigma$ with $a_{\alpha\alpha}=0$
we denote by $r_{\alpha}\Sigma$ a new base obtained from $\Sigma$
by the odd reflection with respect to $\alpha$ ($r_{\alpha}\Sigma\subset \Delta$).}
If $\Delta^+(\Sigma')$ is the set of positive roots for $\Sigma'\in\cB$,
then
$$\Delta^+(r_{\alpha}\Sigma)=(\Delta^+(\Sigma)\setminus\{\alpha\})\cup\{-\alpha\},\ \ \ \
\Delta^+(\Sigma')\cap \Delta_{\ol{0}}=\Delta^+(\Sigma)\cap \Delta_{\ol{0}}.$$

\subsubsection{}
For a base $\Sigma$ we write $\Sigma=\Sigma_1\coprod\Sigma_2$ if $a_{ij}=a_{ji}=0$ for every pair
$\alpha_i\in\Sigma_1$, $\alpha_j\in\Sigma_2$; in this case
$\fg(A)=\fg(A_1)\times\fg(A_2)$ and $\Delta=\Delta_1\coprod \Delta_2$,
where $A_1,A_2$ are Cartan matrices corresponding to $\Sigma_1,\Sigma_2$.
A base $\Sigma$ is called connected if  $\Sigma$ can not
be decomposed as $\Sigma_1\coprod\Sigma_2$ with the above condition.
A subset $\Sigma'\subset\Sigma$ is called a connected component if
$\Sigma'$ is connected and $\Sigma=\Sigma'\coprod (\Sigma\setminus\Sigma')$
(i.e. $a_{ij}=a_{ji}=0$ for every pair
$\alpha_i\in\Sigma'$, $\alpha_j\not\in\Sigma'$). If $\Sigma$ is
connected, then any base obtained
from $\Sigma$ by odd reflections
is connected; in this case we call
the Cartan supermatrix $(A,p)$ and
the corresponding algebra $\fg(A)$ {\em indecomposable}.

\subsection{The Lie algebra $\fg_{pr}$}\label{fgB}
A root $\alpha\in\Delta_{\ol{0}}$ is called {\em principal} if
$\alpha\in\Sigma$ or $\alpha/2\in\Sigma$ for some $\Sigma\in\cB$.
For $\alpha\in \Pi_{pr}$ the root spaces $\fg_{\pm \alpha}$ are one-dimensional
and they generate  a subalgebra $\fsl_2$; in particular,
$\alpha^{\vee}$ is well-defined (it is independent on the choice
of $\Sigma'$ containing  $\alpha$).

Consider the subalgebra $\fg_{pr}$ generated by $\fg_{\pm\beta}$ with
$\beta\in\Pi_{pr}$ and $\fh$. Clearly, $\fg_{pr}\subset\fg_{\ol{0}}$.
By~\cite{S3}, Lem. 3.7, there exists a  homomorphism
$\fg_{pr}\to \fg(B)/\mathfrak{c}$, where $\fg(B)$ is the Kac-Moody algebra
corresponding to the base $\Pi_{pr}$ (with the Cartan matrix given by
$b_{ij}:=\beta_j(\beta^{\vee}_i)$) and $\mathfrak{c}$ is a subspace of the centre
of $[\fg(B),\fg(B)]$.

\subsubsection{Example: $\mathfrak{osp}(2|4)^{(2)}$}\label{CDbad}
Consider $\fg=\mathfrak{osp}(2|4)^{(2)}$ with the Dynkin diagram.
$$
\overset{\delta-\vareps_{1}}{\bullet}\Longleftarrow
\overset{\vareps_{1}-\vareps_2}{\circ}
\Longrightarrow\overset{\vareps_2}{\bullet}$$

The principal roots are
$\{2\delta-2\vareps_{1},\vareps_{1}-\vareps_2,2\vareps_2\}$
and the Lie algebra $\fg_{pr}$  is isomorphic to
$\mathfrak{sp}_4^{(1)}$ with the Dynkin diagram
$$
\overset{2\delta-2\vareps_{1}}{\circ}\Longrightarrow
\overset{\vareps_{1}-\vareps_2}{\circ}
\Longleftarrow\overset{2\vareps_2}{\circ}.$$

The minimal imaginary root of $\fg_{pr}$ is $2\delta$, but
 $\delta\in\Delta_{\ol{0}}$, so
$\fg_{pr}\not=\fg_{\ol{0}}$.

\subsubsection{Partial order}\label{parord}
We set  $\Delta^{++}:=\Delta\cap \mathbb{Q}_{\geq 0}\Pi_{pr}$
and introduce {  a partial order on $\fh^*$ by
$$\nu\geq \mu\ \text{ if } (\nu-\mu)\in\mathbb{Z}_{\geq 0}\Delta^{++}.$$
Note that $\nu\geq 0$ implies $\nu\in \mathbb{Z}_{\geq 0}\Sigma$
for  every $\Sigma\in\cB$.}

\subsection{Weyl group}\label{Weylgroup}
For $\alpha\in\Pi_{pr}$  we  introduce
$r_{\alpha}\in GL(\fh^*),\ r_{\alpha}^{\vee}\in GL(\fh)$ by the formulae
$$r_{\alpha}(\lambda):=\lambda-\lambda(\alpha^{\vee})\alpha,\ \
\  r_{\alpha}^{\vee}(h):=h-\alpha(h){\alpha}^{\vee}.$$
Then $r_{\alpha}^2=Id$, $r_{\alpha}\Delta_{\ol{i}}=\Delta_{\ol{i}}$
for $=0,1$ and
$\dim\fg_{\gamma}=\dim \fg_{r_{\alpha}\gamma}$.

We denote by $W$ (resp., by $W^{\vee}$) the Weyl group of
$\fg(A)$, which is the subgroup of $GL(\fh^*)$ (resp., of
$GL(\fh)$)
generated by the $r_{\alpha}$ (reps., by $r_{\alpha}^{\vee}$) for
$\alpha\in\Pi_{pr}$.  The homomorphism $\fg_{pr}\to
\fg(B)/\mathfrak{c}$ (see~\ref{fgB}) induces a surjective homomorphism
$W\to W(\fg(B))$; since the generators of $W$ satisfy
the Coxeter relations and $W(\fg(B))$ is a Coxeter group
(see~\cite{Kbook2}, 3.13)
this homomorphism is an isomorphism, i.e., $W\cong W(\fg(B))$ is a Coxeter group.
Since
$r_{\alpha}\lambda(h)=\lambda(r_{\alpha}^{\vee}h)$, the correspondence
$r_{\alpha}\mapsto r_{\alpha}^{\vee}$ gives a group isomorphism $W\iso W^{\vee}$
and
$$\forall \lambda\in\fh^*,h\in\fh\ \  (w^{-1}\lambda)(h)=\lambda(w^{\vee}h).$$
We identify $W$ and $W^{\vee}$.
A root $\alpha$ is called {\it real} if there exist $\Sigma\in\cB$ and $w\in W$ such that $w\alpha$ or $w\alpha/2$ lies in $\Sigma$. Otherwise a root is called
  {\it imaginary}. We denote by $\Delta_{re}$ (resp., $\Delta_{im}$) the set of real (resp., imaginary) roots. By $\ol{\Delta}_{re}$ we denote the set of all
  real roots $\alpha$ such that $\alpha/2$ is not a root. Note that
  $$\ol{\Delta}_{re}:=W(\displaystyle\cup_{\Sigma\in\cB} \Sigma),\quad \Delta_{re}:=\ol{\Delta}_{re}\cup W\Pi_{pr}.$$

\subsubsection{}\label{wbetavee}
By~\cite{Kbook2}, Lem. 3.8,  for $\alpha\in\Pi_{pr}$ there exists
$r_{\alpha}^{ad}\in Aut \fg$ satisfying $r_{\alpha}^{\ad}|_{\fh}=r_{\alpha}^{\vee}$
and $r_{\alpha}^{ad}\fg_{\beta}=\fg_{r_{\alpha}\beta}$ for every
$\beta\in\Delta$.

Take $\alpha\in {\Delta}_{re}$. The roots spaces $\fg_{\pm\alpha}$
are one-dimensional
and the subalgebra generated by these roots spaces is
$\fsl_2$,  $\mathfrak{osp}(1|2)$ or $\fsl(1|1)$; in the first
two cases
$\alpha^{\vee}$ is well-defined; in the last case $\alpha^{\vee}$ is defined
up to a non-zero scalar. Note that $\fg_{\pm\alpha}$
 act locally nilpotently in the adjoint representation. We say
that $\alpha$ is {\em isotropic} (resp., {\em non-isotropic}) if $\alpha(\alpha^{\vee})=0$
(resp., $\alpha(\alpha^{\vee})=2$); $\alpha$ is isotropic if and only
if  $\fg_{\pm\alpha}$ generate $\fsl(1|1)$.
For $w\in W,\alpha\in {\Delta}_{re}$ one has
$(w\alpha)^{\vee}=w\alpha^{\vee}$.
For $c\in\mathbb{C}^*,\alpha\in {\Delta}_{re}$ we set $(c\alpha)^{\vee}:=c^{-1}\alpha^{\vee}$.

\subsubsection{}
We will use the following standard lemma {(see~\ref{parord} for notations)}.

\begin{lem}{lemmaxinorb}
For any $\gamma\in\fh^*$ one has
$$\begin{array}{l}
\gamma(\alpha^{\vee})\in\mathbb{Z}_{\geq 0}\text{  for each }\alpha\in\Pi_{pr}
\ \Longrightarrow \forall y\in W\ \ y\gamma\leq \gamma;\\
\gamma(\alpha^{\vee})\in\mathbb{Z}_{>0}\text{  for each }\alpha\in\Pi_{pr}
\ \Longrightarrow \forall y\in W\setminus\{Id\}\  \ y\gamma<\gamma.
\end{array}
$$
\end{lem}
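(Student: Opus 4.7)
The plan is to adapt the classical Kac--Moody maximality argument using reduced expressions in the Coxeter group $W\cong W(\fg(B))$ (see \ref{Weylgroup}). Fix a reduced decomposition $y=r_{\alpha_1}\cdots r_{\alpha_k}$ with $\alpha_j\in\Pi_{pr}$ and telescope
\[
\gamma-y\gamma=\sum_{j=1}^{k}\bigl(r_{\alpha_1}\cdots r_{\alpha_{j-1}}\gamma-r_{\alpha_1}\cdots r_{\alpha_{j}}\gamma\bigr)=\sum_{j=1}^{k}\gamma(\alpha_j^\vee)\,\beta_j,
\]
where $\beta_j:=r_{\alpha_1}\cdots r_{\alpha_{j-1}}\alpha_j$; each inner summand is rewritten using $r_{\alpha}\lambda-\lambda=-\lambda(\alpha^\vee)\alpha$ together with the $W$-equivariance $\lambda(w^\vee h)=(w^{-1}\lambda)(h)$ from \ref{Weylgroup}.

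Next I would identify each $\beta_j$ as an element of $\Delta^{++}$. The classical Coxeter/Kac--Moody fact applied to a reduced expression gives that $\beta_1,\ldots,\beta_k$ are pairwise distinct and each is a positive real root in the root system of $\fg(B)$, i.e.\ $\beta_j\in\mathbb{Z}_{\geq 0}\Pi_{pr}$. On the other hand $\beta_j\in W\Pi_{pr}\subset\Delta_{re}\subset\Delta$ by the description of real roots in \ref{Weylgroup} (using that the automorphisms $r_{\alpha}^{ad}$ from \ref{wbetavee} permute root spaces). Combining these, $\beta_j\in\Delta\cap\mathbb{Z}_{\geq 0}\Pi_{pr}\subset\Delta^{++}$. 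Under the first hypothesis every coefficient $\gamma(\alpha_j^\vee)$ is a non-negative integer, so $\gamma-y\gamma\in\mathbb{Z}_{\geq 0}\Delta^{++}$ and therefore $y\gamma\leq\gamma$.

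For the strict statement suppose in addition $\gamma(\alpha^\vee)>0$ for every $\alpha\in\Pi_{pr}$ and $y\neq Id$, so that $k\geq 1$ and each coefficient $\gamma(\alpha_j^\vee)$ is a positive integer. Expanding $\beta_j=\sum_{\alpha\in\Pi_{pr}}n_{j,\alpha}\alpha$ with non-negative integers $n_{j,\alpha}$ not all zero, the linear independence of $\Pi_{pr}$ (they are simple roots of the Kac--Moody algebra $\fg(B)$) ensures that $\sum_j\gamma(\alpha_j^\vee)\beta_j$ has a strictly positive coefficient at some $\alpha\in\Pi_{pr}$, hence is a nonzero element of $\mathbb{Z}_{\geq 0}\Delta^{++}$, giving $y\gamma<\gamma$. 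The only mildly non-routine step is the identification $\beta_j\in\Delta^{++}$, which couples the Coxeter-group fact about reduced expressions with the inclusion $W\Pi_{pr}\subset\Delta$; everything else is a direct telescoping computation.
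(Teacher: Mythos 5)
Your argument is correct in substance and rests on the same Coxeter-theoretic input as the paper's proof, but it is packaged differently: the paper argues by induction on $l(y)$, peeling off one simple reflection at a time and invoking Kac's Lemma 3.11 to get $w\alpha\in\Delta^+$ when $l(wr_\alpha)>l(w)$, whereas you telescope a reduced expression all at once and invoke the (equivalent, ``integrated'') fact that the roots $\beta_j=r_{\alpha_1}\cdots r_{\alpha_{j-1}}\alpha_j$ attached to a reduced word are positive. The telescoping identity $\gamma-y\gamma=\sum_j\gamma(\alpha_j^{\vee})\beta_j$ is verified correctly, and the inclusion $\beta_j\in\Delta^{++}$ via $W\Pi_{pr}\subset\Delta$ together with $\beta_j\in\mathbb{Z}_{\geq 0}\Pi_{pr}$ is exactly what is needed. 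The two proofs buy essentially the same thing; yours gives the explicit non-negative decomposition of $\gamma-y\gamma$ in one formula, the paper's is shorter because the inductive step hides the bookkeeping.

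One justification in your strictness argument is inaccurate, though the conclusion survives. You claim that the elements of $\Pi_{pr}$ are linearly independent ``since they are simple roots of $\fg(B)$''; but the paper points out in \ref{delta'} that in the (AFF) case with $B$ decomposable the elements of $\Pi_{pr}$ are \emph{not} linearly independent (there is only a homomorphism $\fg_{pr}\to\fg(B)/\mathfrak{c}$, and distinct affine components may share the same imaginary direction in $\fh^*$). The correct way to see that $\sum_j\gamma(\alpha_j^{\vee})\beta_j\neq 0$ when $k\geq 1$ and all $\gamma(\alpha_j^{\vee})>0$ is to note that each $\beta_j$ is a nonzero element of $\mathbb{Z}_{\geq 0}\Sigma$ (every principal root is a positive even root for $\Sigma$, cf.\ \ref{parord}), and $\Sigma$ \emph{is} linearly independent by the construction of the realization; hence a sum of such elements with positive coefficients cannot vanish. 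With that one-line repair your proof is complete.
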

\begin{proof}Take $\gamma$ such that $\gamma(\alpha^{\vee})\in\mathbb{Z}_{\geq 0}$
 for all $\alpha\in\Pi_{pr}$ and $y\in W\setminus\{Id\}$.
 Recall that $W$ is a Coxeter group. We will prove that  $y\gamma\leq \gamma$ by induction on the length $l(y)$. Let $y=wr_{\alpha}$ for some $\alpha\in\Pi_{pr}$ and
 $l(w)<l(y)$. Then $w\alpha\in \Delta^+$ (see~\cite{Kbook2}, Lem. 3.11).
 By induction assumption $w\gamma\leq\gamma$ and
 $$wr_{\alpha}\gamma=w\gamma-\gamma(\alpha^\vee)w\alpha\leq w\gamma\leq\gamma$$
 since $w\alpha\in \Delta^+$ and $\gamma(\alpha^\vee)\in\mathbb{Z}_{\geq 0}$. Moreover, if $\gamma(\alpha^\vee)\in\mathbb{Z}_{>0}$, then $y\gamma<\gamma$ and
 this implies the second assertion.
\end{proof}

\subsection{Types of Cartan matrices}\label{types}
Let $A$ be an indecomposable Cartan supermatrix.

\begin{defn}{}
We say that $A$ has  {\em non-isotropic type} if
 $a_{ii}\not=0$ for each $i$ and {\em isotropic type} otherwise.
\end{defn}

Clearly, this definition does not depend on a representative of the equivalence class of
supermatrices. In fact, the equivalence class of $(A,p)$ coincides with
$\{(A,p)\}$ if and only if $A$ has non-isotropic type  or $A=(0)$. In the latter case $\fg(A)=\fgl(1|1)$).

\subsubsection{}
A square matrix $B$ is called {\em symmetrizable}
if $DB$ is symmetric for some invertible diagonal matrix $D$.
Let $(A,p)$ be a Cartan supermatrix.
The matrix $A$ is symmetrizable if and only if
$\fg(A)$ admits a non-degenerate invariant bilinear form (in particular,
equivalent matrices are both symmetrizable or not). Such form
induces a  non-degenerate  $W$-invariant bilinear form $(-,-)$ on $\fh^*$.
For $\gamma\in\ol{\Delta}_{re}$ one has $(\gamma|\gamma)=0$
if and only if $\gamma$
is isotropic; if $\gamma$ is not isotropic, then
$\gamma^{\vee}=2\gamma/(\gamma|\gamma)$.

\subsubsection{}\label{FINAFFIND}
Let $C$ be an indecomposable real $n\times n$ matrix satisfying (A00)
and
$c_{ij}\in\mathbb{Z}_{\leq 0}$
for $i\not=j$. We view $C$ as a linear operator on $\mathbb{R}^n$;
for $v\in\mathbb{R}^n$ we write $v>0$ (resp., $v\geq 0$) if all coordinates of $v$
are positive (resp., non-negative). By Thm.4.3 of~\cite{Kbook2},
the matrix $C$ satisfies exactly one of the following conditions:

(FIN) $\det C\not=0$, there exists $u>0$ such that $Cu>0$ and $Cv\geq 0$ implies
$v>0$ or $v=0$;

(AFF) $\corank C=1$, there exists $u>0$ such that $Cu=0$ and $Cv\geq 0$ implies
$Cv=0$;

(IND) there exists $u>0$ such that $Cu<0$ and $Cv\geq 0, v\geq 0$ implies
$v=0$.

We say that $C$ has a type (FIN), (AFF) or (IND) respectively.

If $C$ is symmetrizable, then the diagonal matrix $D$
can be chosen in such a way that $d_{ii}>0$. For this choice
$C'=DC$  satisfies the above assumptions (on $C$) and $C, C'$
are of the same type.

\subsection{Non-isotropic type}\label{non-isotropictype}
Let $A$ have  non-isotropic type. Then $\cB=\{\Sigma\}$.  In this case $\fg_{pr}=\fg(B)$.
Furthermore, $b_{ij}=s^{-1}_ia_{ij}s_j$ where $s_i=1+p(i)$.  Therefore $A,B$ have the same type.

Consider a Kac--Moody Lie algebra $\fg^{ev}=\fg(A)$
   given by the same Cartan matrix $A$ and $p=0$.
 Denote by  $\Sigma^{ev}$ (resp., $\Delta^{ev}$)  the  base (resp.,
 the root system) of $\fg^{ev}$. Note that the roots systems of $\fg(B)$ and $\fg^{ev}$ are different as one can see from the following example.

\subsubsection{}
For example, for $\fg=\mathfrak{osp}(2|4)^{(2)}$
the Dynkin diagrams are

$$\begin{array}{ccc}
\Sigma & \Sigma^{ev}&\Pi_{pr}\\
\ &\ &\ \\
\overset{\delta-\vareps_{1}}{\bullet}\Longleftarrow
\overset{\vareps_{1}-\vareps_2}{\circ}
\Longrightarrow\overset{\vareps_2}{\bullet}\ \ &\ \
\overset{\delta-\vareps_{1}}{\circ}\Longleftarrow
\overset{\vareps_{1}-\vareps_2}{\circ}
\Longrightarrow\overset{\vareps_{2}}{\circ}\ \ &\ \
\overset{2\delta-2\vareps_{1}}{\circ}\Longrightarrow
\overset{\vareps_{1}-\vareps_2}{\circ}
\Longleftarrow\overset{2\vareps_2}{\circ}
  \end{array}
$$
so $\Sigma^{ev}$ is of type $D_3^{(2)}$ and $\Pi_{pr}$ is of type  $C_2^{(1)}$.

\subsubsection{}\label{symnoniso}
If we identify $ \fh^*$ with $(\fh^{ev})^*$, then
the Weyl groups of $\Delta$ and of $\Delta^{ev}$ coincide,
and we have  $\ol{\Delta}_{re}=\Delta^{ev}_{re}$.
Using  methods of~\cite{Kbook2}, Ch. V one can show that
$\Delta_{im}=\Delta_{im}^{ev}$.

If $A$ is symmetrizable, we  normalize the bilinear form $(-|-)$
in such a way that $(\alpha|\alpha)=2$ for some $\alpha\in\Sigma$; then
$(\alpha|\alpha)\in \mathbb{Q}_{>0}$ for every $\alpha\in\Sigma$.
From~\cite{Kbook2},  Prop. 5.2, it follows that $\alpha\in \Delta$ is imaginary if and only if
$||\alpha||^2\leq 0$.

\subsection{Isotropic type}\label{isotropictype}
Let $(A,p)$ be a Cartan supermatrix of isotropic type.
These  supermatrices
were classified in~\cite{H}; we recall the results
of this classification below.

The matrix $B$ (introduced in~\S~\ref{fgB}) has the following property:
{\it all  indecomposable components of $B$ are of the same type ((FIN), (AFF) or (IND))}. We have the
 following three cases according to the type of $B$:

(FIN)  All indecomposable components of $B$ are of type (FIN).
In this case $\dim\fg(A)<\infty$,
$\fg_{\ol{0}}=\fg_{pr}$ is reductive
and $[\fg_{\ol{0}},\fg_{\ol{0}}]=\fg(B)$. The group $W$
is finite.

(AFF)  All indecomposable components of $B$ are of type (AFF).
In this case $\fg(A)$ is of finite growth and $W$
is a direct product of affine Weyl groups.
If $A$ is symmetrizable, then
$\fg(A)$ can be described using (twisted) affinization of a
finite dimensional Kac--Moody superalgebra, \cite{vdL}.
If $A$ is not symmetrizable, then either $B=A_1^{(1)}$ and
$A$ is of type $S(1,2,a)$ for $a\not=0, a^{-1}\in\mathbb{C}/\mathbb{Z}$, or
$B=A_n^{(1)}$ and $A$ is of type $Q^{(2)}_{n}$.

(IND) All indecomposable components of $B$ are of type (IND). In this case $\fg(A)$ is of infinite growth. One has
$B=\begin{pmatrix} 2& m& m\\n& 2& n\\ p& p&2 \end{pmatrix}$, it is
indecomposable and symmetrizable
($m,n,p$ are negative integers, not all equal to $-1$);
any triple $(m,n,p)$ give rise to two supermatrices $A^+$ and $A^-$, which are not symmetrizable.

\subsubsection{}\label{delta'}
From this classification it follows that $B$ is always symmetrizable.
For (FIN) and (IND) the elements of $\Pi_{pr}$
are linearly independent and $\det B\not=0$.
For (AFF)  the elements of $\Pi_{pr}$
are linearly independent if and only if $B$ is indecomposable.
{For (FIN) all roots are real; for
(AFF) one has $\Delta^+_{im}=\mathbb{Z}_{>0}\delta$ (where
$\delta$ is the minimal imaginary root).}

\subsection{Examples}
If $A=(2)$ and $p(1)=\ol{0}$, then $\fg(A)=\fsl_2$;
if $A=(2)$ and $p(1)=\ol{1}$, then $\fg(A)=\fosp(1|2)$;
if $A=(0)$, then $p(1)=\ol{1}$ and $\fg(A)=\fgl(1|1)$.

Basic classical Lie superalgebras $\mathfrak{sl}(m|n)$ ($m\not=n$),
$\mathfrak{osp}(m|2n)$, $D(2,1,a)$, $F(4)$, $G(2)$ and their affinizations are
Kac--Moody. The Lie superalgebra
$$\mathfrak{gl}(m|m)=\mathfrak{psl}(m|m)+\mathbb{C}K+\mathbb{C}d$$
is Kac--Moody: its subquotient $\mathfrak{psl}(m|m)$ is simple (and
basic classical)
for $m\geq 2$, this is the only case in (FIN) when the Cartan matrix is degenerate
(and has corank $1$).

The corresponding affine Lie superalgebra $\mathfrak{gl}(m|m)^{(1)}$
$$\mathfrak{gl}(m|m)^{(1)}=\sum_{i\in\mathbb{Z}} \fgl(m|m)\otimes t^i
+\mathbb{C}K+\mathbb{C}d$$
is not a Kac--Moody superalgebra. The Kac--Moody superalgebra
corresponding to $A(n|n)^{(1)}$ is a subquotient of $\mathfrak{gl}(m|m)^{(1)}$, it can be described by
$$\mathbb{C}K+\mathbb{C}d+\mathbb{C}K'+\mathbb{C}d'+\mathfrak{psl}(m|m)\otimes\mathbb{C}[t,t^{-1}].$$
Double central extension is due to the fact that the Cartan matrix has corank $2$.

For the Cartan supermatrix ${Q}_{\ell}^{(2)}$ the algebra $\fg_{pr}=\fg_{\ol{0}}$ is isomorphic to $\fsl_{\ell-1}^{(1)}/(K)\times \mathbb{C}$
and $W$ is the affine Weyl group $A_{\ell-1}^{(1)}$.

The Cartan supermatrix $S(2,1,a)$ is a deformation of $A(1|0)^{(1)}$;
for these cases the algebra $\fg_{pr}\cong\fsl_{2}^{(1)}\times\mathbb{C}$ is a proper subalgebra of $\fg_{\ol{0}}$ and
$W$ is the affine Weyl group $A_1^{(1)}$.

For the remaining case $Q^{\pm}(m,n,t)$ the Cartan matrix $B$ is a symmetrizable
$3\times 3$ matrix, see~\cite{H}, Lem. 3.5 and $\fg_{pr}\cong\fg(B)$.

\subsection{Notations}\label{notat1}
Fix the base $\Sigma$.
We denote by $\Delta^+$ the set of positive roots and set
$$Q^+:=\mathbb{Z}_{\geq 0}\Sigma.$$

\subsubsection{Weyl vector}\label{useful}
We fix a Weyl vector $\rho\in\fh^*$  such that for each $\gamma\in\Sigma$ one has
$$\rho(\gamma^{\vee})=\gamma(\gamma^\vee).$$
If $\beta\in\Sigma$ is isotropic and
$\Sigma'=r_{\beta}\Sigma$ we can choose the  Weyl vector for
$\Sigma'$ as $\rho':=\rho+\beta$. We always choose $\rho_{\Sigma'}$ for all $\Sigma'\in\cB$
  following this rule.
This implies the following useful fact:
$\rho(\alpha^{\vee})$ is integer for {non-isotropic} $\alpha\in\Delta_{re}^+$
and this integer is odd if $\alpha$ is odd.

We define the twisted action of $W$ by
setting for $w\in W,\lambda\in\fh^*$:
$$w.\lambda=w(\lambda+\rho)-\rho.$$
(Note that the definition does not depend on the choice of $\rho$ if
$\Sigma$ is fixed).

\subsubsection{Weyl denominator}\label{Weyldenom}
We set
$$R_{\ol{0}}:=\prod_{\gamma\in \Delta^+_{\ol{0}}} (1-e^{-\gamma})^{\dim \fg_{\gamma}},\
 \ R_{\ol{1}}:=\prod_{\gamma\in \Delta^+_{\ol{1}}} (1+e^{-\gamma})^{\dim \fg_{\gamma}}.$$
Using geometric series we expand $R_{\ol{1}}^{-1}=\sum_{\nu\in Q^+} x_{\nu}e^{-\nu}$ (where $x_{\nu}\in\mathbb{Z}$)
and introduce the Weyl denominator by the formula
 $$R:=R_{\ol{0}}R_{\ol{1}}^{-1}=\sum_{\nu\in Q^+} y_{\nu}e^{-\nu}.$$

For a formal sum $\sum_{\nu\in\fh^*} a_{\nu} e^{\nu}$ with $a_{\nu}\in\mathbb{Q}$ we set
$$\begin{array}{l}
\supp (\sum_{\nu\in\fh^*} a_{\nu}e^{\nu}):=\{\nu|\ a_{\nu}\not=0\},\\
w(\sum_{\nu\in\fh^*} a_{\nu} e^{\nu}):=
\sum_{\nu\in\fh^*} a_{\nu} e^{w\nu}.\end{array}$$

\subsubsection{}
\begin{lem}{lemR}
If $\alpha\in \Pi_{pr}$ is such that $\alpha\in\Sigma$ or
$\frac{\alpha}{2}\in\Sigma$, then  $r_{\alpha}(Re^{\rho})=-Re^{\rho}$.
\end{lem}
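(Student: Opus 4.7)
The plan is to split into the two cases $\alpha\in\Sigma$ and $\alpha/2\in\Sigma$, and in each case compute separately the action of $r_\alpha$ on $R_{\bar 0}$, on $R_{\bar 1}$, and on $e^\rho$, then combine the three factors. The key combinatorial input is the standard fact that a simple reflection through an even simple root permutes $\Delta^+$ with that root deleted, while the reflection associated to an odd non-isotropic simple root $\gamma$---which on $\fh^\ast$ coincides with $r_{2\gamma}$---permutes $\Delta^+\setminus\{\gamma,2\gamma\}$ and swaps the signs of $\gamma$ and $2\gamma$. In both situations the root multiplicities are preserved.

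\emph{Case $\alpha\in\Sigma$.} Since $\alpha$ is even, $r_\alpha$ preserves the parity grading, so it permutes $\Delta^+_{\bar 0}\setminus\{\alpha\}$ and $\Delta^+_{\bar 1}$ separately. Consequently $r_\alpha(R_{\bar 0})=-e^\alpha R_{\bar 0}$ and $r_\alpha(R_{\bar 1})=R_{\bar 1}$, so $r_\alpha(R)=-e^\alpha R$. Combined with $r_\alpha(\rho)=\rho-\alpha$, which follows from the defining property of $\rho$ applied to the non-isotropic root $\alpha$, this immediately yields $r_\alpha(Re^\rho)=-Re^\rho$.

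\emph{Case $\alpha/2\in\Sigma$.} Set $\gamma:=\alpha/2$, so $\gamma\in\Sigma$ is odd non-isotropic and $\alpha=2\gamma\in\Pi_{pr}$. The reflection $r_\alpha=r_\gamma$ permutes $\Delta^+_{\bar 0}\setminus\{\alpha\}$ and $\Delta^+_{\bar 1}\setminus\{\gamma\}$; using also $r_\alpha(\alpha)=-\alpha$ and $r_\alpha(\gamma)=-\gamma$ one finds
\[r_\alpha(R_{\bar 0})=-e^\alpha R_{\bar 0},\qquad r_\alpha(R_{\bar 1})=\frac{1+e^\gamma}{1+e^{-\gamma}}\,R_{\bar 1}=e^\gamma R_{\bar 1},\]
so $r_\alpha(R)=-e^{\alpha-\gamma}R=-e^\gamma R$. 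Since $\alpha^\vee=\gamma^\vee/2$, the defining property of $\rho$ gives $r_\alpha(\rho)=\rho-\gamma$, and we conclude $r_\alpha(Re^\rho)=-e^\gamma R\cdot e^{\rho-\gamma}=-Re^\rho$.

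The computations themselves are routine; the only place requiring care is the bookkeeping in Case~2, where $r_\alpha$ acts nontrivially on both $R_{\bar 0}$ and $R_{\bar 1}$ and the two contributions $-e^{\alpha}$ and $e^{\gamma}$ combine to the single factor $-e^\gamma$, which exactly cancels the shift $-\gamma$ picked up by $e^\rho$. This cancellation is precisely what the chosen normalization of the Weyl vector is designed to ensure.
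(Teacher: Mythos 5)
Your proof is correct and follows essentially the same route as the paper: both rest on the fact that $r_{\alpha}$ permutes $\Delta^+$ minus the multiples of the simple root $\beta$ ($=\alpha$ or $\alpha/2$), producing the factor $-e^{\beta}$ from the deleted factors of $R$, which is then cancelled by $r_{\alpha}\rho=\rho-\beta$. The paper merely packages the bookkeeping once and for all by writing $R=(1-e^{-\beta})R_{\alpha}$ with $R_{\alpha}$ $r_{\alpha}$-invariant, whereas you track $R_{\ol 0}$ and $R_{\ol 1}$ separately; the content is identical.
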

\begin{proof}
{Set $\beta:=\alpha$
if $\alpha\in\Sigma$ and $\beta:=\frac{\alpha}{2}$ otherwise. Then
$\beta\in\Sigma$ and $R=(1-e^{-\beta})R_{\alpha}$, where }
$$R_{\alpha}=\prod_{\gamma\in \Delta^+_{\ol{0}}\setminus\{2\alpha\}} (1-e^{-\gamma})^{\dim \fg_{\gamma}}
\prod_{\gamma\in \Delta^+_{\ol{1}}\setminus\{\alpha\}} (1+e^{-\gamma})^{-\dim \fg_{\gamma}}.$$
The set $\Delta^+\setminus\mathbb{Z}\alpha$ is $r_{\alpha}$-invariant so $r_{\alpha}R_{\alpha}=R_{\alpha}$. {Now the formula $r_{\alpha}\rho=\rho-\beta$
implies the claim.}
\end{proof}

\subsection{Root subsystems }\label{Deltalambda}
A subset $\Delta'\subset W\Pi_{pr}$
is called a {\em root subsystem}
if $r_{\alpha}\beta\in\Delta'$ for any $\alpha,\beta\in\Delta'$.
For a root subsystem $\Delta'$ we denote by $W(\Delta')$ the subgroup
of $W$ generated by $r_{\alpha},\alpha\in\Delta'$; we set
$(\Delta')^+:=\Delta'\cap\Delta^+$ (recall that $\Delta^+\cap
W\Pi_{pr}$ does not depend on a choice of
$\Sigma'\in\cB$)
and introduce the base {$\Pi(\Delta')$ }by the formula
$$\Pi(\Delta'):=\{\beta\in (\Delta')^+|\ \not\exists\alpha\in \Delta'
\text{ s.t. }  0<r_{\alpha}\beta<\beta\},$$
where we use the partial order introduced in~\S~\ref{parord}.
In particular,
$(\Pi_{pr}\cap\Delta')\subset \Pi(\Delta')$. {By~\Lem{lemPilambda}
$\Pi(\Delta')$ coincides with the set of indecomposable elements in $(\Delta')^+$.}

For $\lambda\in\fh^*$ we introduce a root subsystem
$$\Delta(\lambda)=\{\alpha\in W\Pi_{pr}|\
\lambda(\alpha^{\vee})\in \mathbb{Z}\}.$$
and set  $W(\lambda):=W(\Delta(\lambda))$, $\Pi(\lambda):=\Pi(\Delta(\lambda))$.
For example, $\Delta(0)=W\Pi_{pr}$ and $\Pi(0)=\Pi_{pr}$.
One has $\Delta(w\lambda)=w\Delta(\lambda)$ for every $w\in W$
and $\Delta(\lambda)=\Delta(\lambda-\mu)$ for $\mu\in
\mathbb{Z}\Sigma$.

We have the following useful formula
$$\Delta(\lambda):=\{\alpha\in W\Pi_{pr}|\ \alpha/2\not\in\Delta\ \&\
(\lambda+\rho)(\alpha^{\vee})\in \mathbb{Z}\text{ or }
\ \alpha/2\in\Delta\ \&\
(\lambda+\rho)(\alpha^{\vee})\in\mathbb{Z}+1/2\}.$$
For every $w\in W(\lambda)$ we have $w(\lambda+\rho)-(\lambda+\rho)\in\mathbb{Z}\Sigma$.

\subsubsection{Example}\label{exB412}
Consider $\fg:=\mathfrak{osp}(9|2)$.
In this case
$$\Pi=\{\vareps_1-\vareps_2,\vareps_2-\vareps_3,\vareps_3-\vareps_4,\vareps_4;\delta_1\},\ \
\Pi_{pr}=\{\vareps_1-\vareps_2,\vareps_2-\vareps_3,\vareps_3-\vareps_4,\vareps_4;2\delta_1\},$$
so $\Pi_{pr}$ is of the type $B_4\times A_1$.
For $\lambda=\frac{1}{3}(\vareps_1+\vareps_3)$ one has
$$\Delta(\lambda)^+:=\{\vareps_1-\vareps_3,\vareps_2,\vareps_4,\vareps_2\pm\vareps_4,
2\delta_1\},$$
so $\Delta(\lambda)$ is  a root system  the type $A_1\times B_2\times A_1$ with the base
$$\Pi(\lambda)=\{\vareps_1-\vareps_3,\vareps_2-\vareps_4,\vareps_4;
2\delta_1\}.$$

\subsubsection{}
The following lemma is standard.

\begin{lem}{lemPilambda}
Let $\Delta'\subset W\Pi_{pr}$ be a root subsystem and
$\Pi':=\Pi(\Delta')$.

(i) The group $W(\Delta')$ is generated by $r_{\alpha}$ with
$\alpha\in  \Pi'$ and $\Delta'=W(\Delta') \Pi'$.

(ii) One has  $(\Delta')^+\subset \mathbb{Z}_{\geq 0}\Pi'$.

(iii) If $\alpha\in \Pi_{pr}\setminus\Pi'$, then
 $\Pi(r_{\alpha}\Delta')=r_{\alpha}\Pi'$.
\end{lem}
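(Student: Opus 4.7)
My plan is to establish (ii), (i), (iii) in that order, inducting on the height of $\beta \in (\Delta')^+$ relative to $\Sigma$. By \S\ref{parord}, $\nu\ge 0$ forces $\nu\in\mathbb{Z}_{\ge 0}\Sigma$, so the height (sum of coefficients in $\Sigma$) takes values in $\mathbb{Z}_{\ge 0}$ and the induction is well-founded.

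For (ii), the base case $\beta \in \Pi'$ is trivial. If $\beta \in (\Delta')^+ \setminus \Pi'$, the definition of $\Pi'$ yields $\gamma \in \Delta'$ with $0 < r_\gamma \beta < \beta$. Since $r_\gamma \beta = \beta - \beta(\gamma^\vee)\gamma$, after replacing $\gamma$ by $-\gamma$ if needed I may assume $\gamma \in (\Delta')^+$ and $\beta(\gamma^\vee) > 0$. The estimates $\gamma \leq \beta(\gamma^\vee)\gamma < \beta$ and $r_\gamma \beta < \beta$ produce two elements of $(\Delta')^+$ of strictly smaller height, so the induction hypothesis gives $\gamma,\ r_\gamma \beta \in \mathbb{Z}_{\geq 0}\Pi'$, and hence $\beta = r_\gamma \beta + \beta(\gamma^\vee)\gamma$ does too.

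For (i), set $W' := \langle r_\pi : \pi \in \Pi' \rangle \subseteq W(\Delta')$. It is enough to show every $\beta \in (\Delta')^+$ has the form $w\pi$ with $w \in W',\ \pi \in \Pi'$, for then $r_\beta = w r_\pi w^{-1} \in W'$ and $\Delta' = W'\Pi'$. Induct on height: if $\beta \notin \Pi'$, I need some $\pi \in \Pi'$ with $\beta(\pi^\vee) > 0$. Choose $\gamma \in (\Delta')^+$ with $\beta(\gamma^\vee) > 0$ as in the proof of (ii), expand $\gamma = \sum m_i \pi_i$ via (ii), and deduce $\beta(\pi_i^\vee) > 0$ for some $i$. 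In the symmetrizable setting of \S\ref{symnoniso} this is immediate from $\beta(\gamma^\vee) = 2(\beta|\gamma)/(\gamma|\gamma)$ with $(\gamma|\gamma) > 0$, so that $\sum m_i (\beta|\pi_i) > 0$ forces $(\beta|\pi_i) > 0$ for some $i$. With $\pi := \pi_i$, the root $r_\pi \beta = \beta - \beta(\pi^\vee)\pi$ lies in $\Delta'$, has strictly smaller height, and remains positive (its $\Pi'$-expansion from (ii) stays non-negative since $\beta \neq \pi$); induction gives $r_\pi \beta = w'\pi'$, whence $\beta = r_\pi w' \pi'$.

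For (iii), the inclusion $\Pi_{pr} \cap \Delta' \subseteq \Pi'$ noted in the text combined with $\alpha \in \Pi_{pr}\setminus \Pi'$ forces $\alpha \notin \Delta'$; since the real roots $W\Pi_{pr}$ of $\fg_{pr}$ contain no nontrivial scalar multiples of $\alpha$, one also has $\mathbb{Z}\alpha \cap \Delta' = \emptyset$. Because $\alpha$ (or $\alpha/2$) lies in some $\Sigma \in \cB$, the reflection $r_\alpha$ permutes the positive roots of $\fg$ disjoint from $\mathbb{Z}\alpha$, and therefore restricts to a sign-preserving bijection $(\Delta')^+ \to (r_\alpha \Delta')^+$. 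The identity $r_{r_\alpha \gamma}(r_\alpha \beta) = r_\alpha(r_\gamma \beta)$ then lets the condition $r_\alpha \beta \in \Pi(r_\alpha \Delta')$ unwind termwise to the condition $\beta \in \Pi'$, giving $\Pi(r_\alpha \Delta') = r_\alpha \Pi'$.

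The main obstacle is the positivity step in (i) --- extracting $\pi \in \Pi'$ with $\beta(\pi^\vee) > 0$ from the weaker data that \emph{some} $\gamma \in (\Delta')^+$ has $\beta(\gamma^\vee) > 0$. In the symmetrizable case the invariant form argument above is clean; outside it, one must appeal to the standard theory of bases of real root subsystems in Kac--Moody Lie algebras, applied to $\fg_{pr}$.
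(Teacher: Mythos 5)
Your parts (ii) and (iii) are essentially the paper's own arguments (induction on height in place of a minimal counterexample for (ii); transporting the defining condition of $\Pi$ along the positivity-preserving bijection $r_\alpha\colon (\Delta')^+\to(r_\alpha\Delta')^+$ for (iii)), and they are fine. The problem is part (i). You reduce it to producing $\pi\in\Pi'$ with $\beta(\pi^\vee)>0$, and this step is a genuine gap. First, your bilinear-form argument is unsound even in the symmetrizable case: for a super\-algebra the principal roots need not all have positive norm. In $\mathfrak{osp}(9|2)$ (the paper's~\ref{exB412}) one has $2\delta_1\in\Pi_{pr}$ with $(2\delta_1|2\delta_1)<0$ while $(\vareps_i|\vareps_i)>0$, so from $(\beta|\pi_i)>0$ you cannot conclude $\beta(\pi_i^\vee)=2(\beta|\pi_i)/(\pi_i|\pi_i)>0$, and the sign of $(\gamma|\gamma)$ in $\beta(\gamma^\vee)=2(\beta|\gamma)/(\gamma|\gamma)$ is likewise not controlled. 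Second, for non-symmetrizable $A$ of non-isotropic type the matrix $B$ need not be symmetrizable, and deferring to ``the standard theory of bases of real root subsystems'' is circular: this lemma \emph{is} that standard statement, which is why the paper proves it.

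The paper's proof of (i) shows how to avoid needing any $\pi\in\Pi'$ with $\beta(\pi^\vee)>0$ at all. The key observation is: if $\gamma,\alpha\in(\Delta')^+$ and $0<r_\alpha\gamma<\gamma$, then $r_\alpha\gamma=\gamma-j\alpha$ with $j=\gamma(\alpha^\vee)\in\mathbb{Z}_{>0}$, whence \emph{both} $r_\alpha\gamma<\gamma$ and $\alpha\le j\alpha<\gamma$. Now take $\gamma$ minimal in $(\Delta')^+\setminus W''\Pi'$ (where $W'':=\langle r_\pi:\pi\in\Pi'\rangle$); since $\gamma\notin\Pi'$ there is such an $\alpha$, and by minimality $\alpha$ and $r_\alpha\gamma$ both lie in $W''\Pi'$. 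Writing $\alpha=w\pi$ gives $r_\alpha=wr_\pi w^{-1}\in W''$, so $\gamma=r_\alpha(r_\alpha\gamma)\in W''\Pi'$, a contradiction. This works with no bilinear form and no symmetrizability hypothesis; it only uses the integrality of $\gamma(\alpha^\vee)$ and well-foundedness of the partial order, exactly the tools you already use in (ii). I recommend replacing your argument for (i) by this one.
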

\begin{proof}
We will use the following observation:
if  $\gamma,\alpha\in (\Delta')^+$ are such that
$0<r_{\alpha}\gamma<\gamma$, then $\alpha<\gamma$
(since $r_{\alpha}\gamma=\gamma-j\alpha$ for some $j>0$).

For (i) let $W''$ be the subgroup of  $W$
which is generated by the $r_{\alpha}$s with
$\alpha\in   \Pi'$. Assume that $(\Delta')^+\not\subset W'' \Pi'$.
Let $\gamma$ be a minimal
 element in $(\Delta')^+\setminus W'' \Pi'$.
Since $\gamma\not\in \Pi'$ there exists $\alpha\in (\Delta')^+$
such that $0<r_{\alpha}\gamma<\gamma$. Then, by above,
 $r_{\alpha}\gamma,\alpha<\gamma$, so
$r_{\alpha}\gamma,\alpha\in W'' \Pi'$. In particular, $r_{\alpha}\in W''$ and thus
$\gamma\in  W'' \Pi'$, a contradiction. We conclude that
$(\Delta')^+\subset W'' \Pi'$.
Then  $r_{\alpha}\in W''$
 for every $\alpha\in\Delta'$, so $W''=W(\Delta')$. This establishes
 (i).  For (ii) let  $\gamma$ be a minimal
 element in $(\Delta')^+\setminus (\mathbb{Z}_{\geq 0} \Pi')$.
Since $\gamma\not\in \Pi'$, there exists $\alpha\in (\Delta')^+$
such that $0<r_{\alpha}\gamma<\gamma$.  Then
$r_{\alpha}\gamma,\alpha\in \mathbb{Z}_{\geq 0}\Pi'$,
so $\gamma\in \mathbb{Z}_{\geq 0}\Pi'$,
 a contradiction.
This gives (ii).

For (iii) take $\alpha\in \Pi_{pr}\setminus  \Pi'$ and
$\beta\in  \Pi'$. Assume that  $r_{\alpha}\beta\not\in  \Pi(r_{\alpha}\Delta')$. Since $r_{\alpha}\beta\in \Delta^+$ there exists
\begin{equation}\label{eqralph}
\alpha'\in \bigl(r_{\alpha}\Delta'\cap\Delta^+\bigr)\ \text{ s.t. }\
r_{\alpha'}r_{\alpha}\beta\in\Delta^+\text{ and }
(r_{\alpha}\beta)((\alpha')^{\vee})>0.\end{equation}
Set $\gamma:=r_{\alpha}\alpha'$. Then $\gamma\in (\Delta'\cap r_{\alpha}\Delta^+)$
and
$r_{\gamma}\beta=r_{\alpha}r_{\alpha'}r_{\alpha}\beta\in (\Delta'\cap r_{\alpha}\Delta^+)$.

By above, $\alpha\not\in\Delta'$, so $\Delta'\cap r_{\alpha}\Delta^+=(\Delta')^+$.
Thus $\gamma,r_{\gamma}\beta \in  (\Delta')^+$.
By~(\ref{eqralph}),
$$\beta(\gamma^{\vee})=\beta((r_{\alpha}\alpha')^{\vee})=(r_{\alpha}\beta)((\alpha')^{\vee})>0.$$
 Hence $\beta\not\in \Pi'$, a contradiction.
This completes the proof.
\end{proof}

\subsubsection{}\label{friends}
\begin{defn}{}
Let $\Delta'$ be  a root subsystem of $W\Pi_{pr}$.

For $\alpha_1,\ldots,\alpha_s\in \Pi_{pr}$ we say that
$\Delta'$
is $(\alpha_1,\ldots,\alpha_s)$-{\em friendly} if
$$\alpha_1,\ r_{\alpha_1}\alpha_2,\,\ldots,\
r_{\alpha_1}\ldots r_{\alpha_{s-1}}\alpha_s
\not\in\Delta'.$$

We say that $\Delta'$ is $w$-{\em friendly} for $w\in W$ if $w=Id$ or if
$w$ can be written as
$w=r_{\alpha_s}\ldots r_{\alpha_{1}}$ such that
$\Delta'$
is $(\alpha_1,\ldots,\alpha_s)$-{friendly}.
We say that $\lambda$ is $(\alpha_1,\ldots,\alpha_s)$-{\em friendly}
(resp., $w$-friendly) if $\Delta(\lambda)$ is
$(\alpha_1,\ldots,\alpha_s)$-{friendly}
(resp., $w$-friendly).
\end{defn}

\subsubsection{}
\begin{lem}{lemwmin}
Fix $\lambda\in\fh^*$.

(i) If $\alpha\in\Pi_{pr}, y\in W$ are such that
$\lambda$ is $r_{\alpha}$-friendly
and $r_{\alpha}\lambda$ is $y$-friendly, then
$\lambda$ is $yr_{\alpha}$-friendly.

(ii) If $\lambda$ is $w$-friendly, then $\Pi(w\lambda)=w\Pi(\lambda)$.

(iii) For each $\beta\in \Pi(\lambda)$
 there exists $w\in W$ such that $\lambda$ is $w$-friendly
and $w\beta\in\Pi_{pr}$.
\end{lem}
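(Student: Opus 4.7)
For (i), the plan is to unwind definitions. Pick a decomposition $y=r_{\alpha_s}\cdots r_{\alpha_1}$ witnessing the $y$-friendliness of $r_\alpha\lambda$, so that $r_{\alpha_1}\cdots r_{\alpha_{i-1}}\alpha_i\notin \Delta(r_\alpha\lambda)=r_\alpha\Delta(\lambda)$ for $1\leq i\leq s$; applying $r_\alpha$ gives $r_\alpha r_{\alpha_1}\cdots r_{\alpha_{i-1}}\alpha_i\notin\Delta(\lambda)$, and together with the hypothesis $\alpha\notin\Delta(\lambda)$ the sequence $(\alpha,\alpha_1,\dots,\alpha_s)$ witnesses $(yr_\alpha)$-friendliness of $\lambda$.

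For (ii) I will induct on the length $s$ of a friendly decomposition $w=r_{\alpha_s}\cdots r_{\alpha_1}$. The case $s=0$ is trivial. For $s\geq 1$ the first friendliness condition reads $\alpha_1\notin\Delta(\lambda)$; since $\Pi_{pr}\cap\Delta(\lambda)\subseteq\Pi(\lambda)$ this gives $\alpha_1\in\Pi_{pr}\setminus\Pi(\lambda)$, so \Lem{lemPilambda}(iii) yields $\Pi(r_{\alpha_1}\lambda)=r_{\alpha_1}\Pi(\lambda)$. The remaining entries $(\alpha_2,\ldots,\alpha_s)$ witness $(r_{\alpha_s}\cdots r_{\alpha_2})$-friendliness of $r_{\alpha_1}\lambda$, because $r_{\alpha_2}\cdots r_{\alpha_{j-1}}\alpha_j\notin \Delta(r_{\alpha_1}\lambda)$ is equivalent to $r_{\alpha_1}r_{\alpha_2}\cdots r_{\alpha_{j-1}}\alpha_j\notin\Delta(\lambda)$. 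The induction hypothesis then yields $\Pi(w\lambda)=r_{\alpha_s}\cdots r_{\alpha_2}\Pi(r_{\alpha_1}\lambda)=w\Pi(\lambda)$.

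For (iii), which is the main content, I plan to induct on the $\Pi_{pr}$-height of $\beta$. If $\beta\in\Pi_{pr}$, take $w=\Id$. Otherwise $\beta$ is a positive real root of $\fg_{pr}$ not in $\Pi_{pr}$, so standard Kac--Moody theory supplies $\alpha\in\Pi_{pr}$ with $\beta(\alpha^\vee)>0$, and then $r_\alpha\beta=\beta-\beta(\alpha^\vee)\alpha$ is a positive real root of strictly smaller $\Pi_{pr}$-height. I claim $\alpha\notin\Delta(\lambda)$: if not, $\alpha\in\Pi_{pr}\cap\Delta(\lambda)\subseteq\Pi(\lambda)$; since $\alpha$ is principal and $\beta\in W\Pi_{pr}\setminus\{\alpha\}$, the standard action of a simple reflection on positive real roots gives $r_\alpha\beta>0$, and since $r_\alpha$ preserves $\Delta(\lambda)$ (as $\alpha\in\Pi(\lambda)$) we get $r_\alpha\beta\in\Delta(\lambda)^+$, so $0<r_\alpha\beta<\beta$ contradicts $\beta\in\Pi(\lambda)$. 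Hence $\lambda$ is $r_\alpha$-friendly; by (ii), $r_\alpha\beta\in r_\alpha\Pi(\lambda)=\Pi(r_\alpha\lambda)$, and the induction hypothesis applied to $r_\alpha\lambda$ and the lower-height root $r_\alpha\beta$ produces $w'\in W$ with $r_\alpha\lambda$ being $w'$-friendly and $w'(r_\alpha\beta)\in\Pi_{pr}$; part (i) then yields that $w:=w'r_\alpha$ works. The main subtlety is the permutation step in the claim, which relies on the principality of $\alpha$ so that $2\alpha\notin\Delta$ and the usual Kac--Moody argument applies.
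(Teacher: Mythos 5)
Your proof is correct and follows essentially the same route as the paper: (i) and (ii) come from unwinding the definition of friendliness using $\Delta(z\lambda)=z\Delta(\lambda)$ together with \Lem{lemPilambda}\,(iii), and (iii) proceeds by descending induction, producing $\alpha\in\Pi_{pr}$ with $0<r_{\alpha}\beta<\beta$, deducing $\alpha\notin\Delta(\lambda)$ from $\beta\in\Pi(\lambda)$, and combining (i) and (ii) with the induction hypothesis. The only cosmetic difference is that you induct on the $\Pi_{pr}$-height and cite standard Kac--Moody theory for the existence of $\alpha$ with $\beta(\alpha^{\vee})>0$, whereas the paper inducts on the partial order of~\ref{parord} and obtains the same $\alpha$ from \Lem{lemmaxinorb}.
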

\begin{proof}
Note that for any $z\in W$ we have  $\Delta(z\lambda)=z\Delta(\lambda)$. This  implies (i);
 (ii) follows from~\Lem{lemPilambda}  (iii).
We prove (iii) by induction on $\beta\in(\Delta^+\cap W\Pi_{pr})$.
If $\beta\in\Pi_{pr}$ one has $w=Id$. Now take $\beta\in \Pi(\lambda)$ with
$\beta\not\in\Pi_{pr}$. Since $\beta\in W\Pi_{pr}$, the orbit
$W\beta$ contains $\beta'\in\Pi_{pr}$, so, by~\Lem{lemmaxinorb},
 there exists $\alpha\in\Pi_{pr}$ such that
$0<r_{\alpha}\beta<\beta$.  Since $\beta\in \Pi(\lambda)$
one has  $\alpha\not\in \Delta(\lambda)$.
Thus $\lambda$ is $r_{\alpha}$-friendly and, by (ii),
$r_{\alpha}\beta\in \Pi(r_{\alpha}\lambda)$.
Since $r_{\alpha}\beta<\beta$, the induction hypothesis
provides the existence of $y\in W$ such that $yr_{\alpha}\beta\in\Pi_{pr}$
and $y$ is $r_{\alpha}\lambda$-friendly.
By (i), $\lambda$ is $yr_{\alpha}$-friendly. This completes the proof.
\end{proof}

\subsubsection{}
Recall that $B$ is the "Cartan matrix of $\Pi_{pr}$''
see~\ref{fgB} and
that $B$ is symmetrizable if  the Cartan matrix $A$ is  of isotropic type
(see~\S~\ref{isotropictype}) or if $A$ is symmetrzaible.

\subsubsection{}
\begin{prop}{eq>0}
Assume that the Cartan matrix $A$ is either symmetrizable
or of isotropic type. Let $\Delta'\subset W\Pi_{pr}$ be a root subsystem . Then

(i) $\Pi(\Delta')=\{\beta\in (\Delta')^+|\ r_{\beta}((\Delta')^+\setminus\{\beta\})=
(\Delta')^+\setminus\{\beta\}\}$;

(ii) ${W}(\Delta')$ is the Coxeter group
corresponding to $\Pi(\Delta')$;

(iii) if $\Delta'=\Delta(\lambda)$, then the following conditions are equivalent:

\hspace{1cm}  $(\lambda+\rho)(\alpha^{\vee})>0$ for each $\alpha\in\Pi(\lambda)$;

\hspace{1cm} $\lambda+\rho$ is a maximal element in ${W}(\lambda)(\lambda+\rho)$
and $\Stab_{W(\lambda)}(\lambda+\rho)=\{Id\}$.
\end{prop}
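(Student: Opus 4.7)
The plan is to prove (i), (ii), (iii) in sequence, using throughout that $B$ is symmetrizable under either hypothesis (\S\ref{isotropictype} and \S\ref{symnoniso}); this endows $W\Pi_{pr}$ with the $W$-invariant symmetric bilinear form of a symmetrizable Kac--Moody root system, so that $(\alpha,\alpha)>0$ for every $\alpha\in W\Pi_{pr}$ and $\alpha(\beta^\vee)\in\mathbb Z$ for every pair of real roots $\alpha,\beta\in W\Pi_{pr}$. For (i) I argue both inclusions. Given $\beta\in\Pi(\Delta')$ and $\alpha\in(\Delta')^+\setminus\{\beta\}$, \Lem{lemPilambda}(ii) lets me expand $\alpha=\sum c_i\beta_i$ with $\beta_i\in\Pi(\Delta')$ and $c_i\in\mathbb Z_{\geq 0}$; since $\alpha\ne\beta$ some $c_i>0$ has $\beta_i\ne\beta$, and because $r_\beta$ modifies only the $\beta$-coefficient of any root, that positive $c_i$ persists in $r_\beta\alpha$, forcing $r_\beta\alpha\in(\Delta')^+\setminus\{\beta\}$. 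For the reverse inclusion I argue contrapositively: if $\beta\in(\Delta')^+\setminus\Pi(\Delta')$ I choose $\alpha\in(\Delta')^+$ with $0<r_\alpha\beta<\beta$, which gives $\beta(\alpha^\vee)\geq 1$ and $\alpha<\beta$; symmetrizability forces $\alpha(\beta^\vee)$ to have the same sign as $\beta(\alpha^\vee)$, so $\alpha(\beta^\vee)\geq 1$ by integrality, whence $r_\beta\alpha=\alpha-\alpha(\beta^\vee)\beta\leq\alpha-\beta<0$, showing that $\beta$ lies outside the right-hand side.

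For (ii), \Lem{lemPilambda}(i) already provides the generating set $\{r_\beta:\beta\in\Pi(\Delta')\}$ of $W(\Delta')$. The Coxeter property I would obtain by transporting the proof from Ch.~3 of \cite{Kbook2}: part (i) above says that each $r_\beta$ with $\beta\in\Pi(\Delta')$ permutes $(\Delta')^+\setminus\{\beta\}$, \Lem{lemPilambda}(ii) places $(\Delta')^+$ inside $\mathbb Z_{\geq 0}\Pi(\Delta')$, and the bilinear form from the symmetrizability of $B$ supplies the linear-algebraic input required to verify the strong exchange condition for $W(\Delta')$ on the real span of $\Pi(\Delta')$. These are precisely the ingredients Kac uses to prove $W$ itself is a Coxeter group, so the argument transfers essentially verbatim. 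This is the main obstacle of the proposition: $W(\Delta')$ is in general an infinite reflection subgroup of $W$ whose generators $\{r_\beta\}_{\beta\in\Pi(\Delta')}$ need not be simple reflections of $W$, so the Coxeter property cannot be quoted as a black box; symmetrizability of $B$, together with the restriction $\Delta'\subset W\Pi_{pr}$ (which keeps us among real roots where $\alpha(\beta^\vee)\in\mathbb Z$), is exactly what makes Kac's proof available in this subsystem setting.

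For (iii), set $\mu:=\lambda+\rho$ and view $(W(\lambda),\{r_\alpha\}_{\alpha\in\Pi(\lambda)})$ as a Coxeter system via (ii). For (a)$\Rightarrow$(b) I reproduce the length induction from \Lem{lemmaxinorb} inside $W(\lambda)$: writing $w=w'r_\alpha$ with $l(w')<l(w)$, standard Coxeter theory gives $w'\alpha\in(\Delta(\lambda))^+$, and as a positive real root of $\fg(B)$ this lies in $\mathbb Z_{\geq 0}\Pi_{pr}\subset\mathbb Z_{\geq 0}\Delta^{++}$; combined with the induction hypothesis $w'\mu\leq\mu$ and the assumption $\mu(\alpha^\vee)>0$, the identity $w\mu=w'\mu-\mu(\alpha^\vee)w'\alpha$ yields the strict inequality $w\mu<\mu$, so $\mu$ is the unique maximum in $W(\lambda)\mu$ and $\Stab_{W(\lambda)}(\mu)$ is trivial. (When $\alpha/2\in\Delta$ the coefficient $\mu(\alpha^\vee)$ is only a positive half-integer, but $\mu(\alpha^\vee)w'\alpha=(2\mu(\alpha^\vee))(w'\alpha/2)$ with $2\mu(\alpha^\vee)\in\mathbb Z_{>0}$ and $w'\alpha/2\in\Delta^{++}$, so the strict inequality in the partial order of \S\ref{parord} still holds.) For (b)$\Rightarrow$(a) I argue by contrapositive: if $\mu(\alpha^\vee)\leq 0$ for some $\alpha\in\Pi(\lambda)$, then $r_\alpha\mu=\mu-\mu(\alpha^\vee)\alpha$ either equals $\mu$ (when $\mu(\alpha^\vee)=0$, contradicting trivial stabilizer) or strictly exceeds $\mu$ (when $\mu(\alpha^\vee)<0$, contradicting maximality).
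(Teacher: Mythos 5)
Your treatment of (iii) is essentially the paper's: the authors obtain (iii) from (ii) together with \Lem{lemmaxinorb}, and your length induction inside the Coxeter system $(W(\lambda),\Pi(\lambda))$ is exactly that argument transplanted to the subsystem; your explicit handling of the half-integer values of $(\lambda+\rho)(\alpha^{\vee})$ when $\alpha/2\in\Delta$ is a correct extra detail, and the contrapositive for the converse direction is fine. Where you genuinely diverge is (i)--(ii): the paper does not prove these but quotes \cite{KT98}, 2.2.8--2.2.9, which treat precisely root subsystems of a symmetrizable Kac--Moody root system, whereas you attempt a self-contained derivation. Your reverse inclusion in (i) is sound: positivity of $(\alpha|\alpha)$ on $W\Pi_{pr}$ plus integrality of $\alpha(\beta^{\vee})$ do yield $\alpha(\beta^{\vee})\geq 1$ and hence $r_{\beta}\alpha<0$.

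The gap is in the forward inclusion of (i) and, more seriously, in (ii). Both steps tacitly treat $\Pi(\Delta')$ as if it were linearly independent: you speak of ``the $\beta$-coefficient'' of a root and of a positive coefficient that ``persists'' under $r_{\beta}$, and you assert that Kac's Ch.~3 proof that $W$ is a Coxeter group ``transfers essentially verbatim''. Neither is automatic. By \S\ref{delta'} already $\Pi_{pr}$ fails to be linearly independent when $B$ is decomposable of affine type, and the same happens for $\Pi(\Delta')$ whenever $\Delta'$ has two or more affine components (a case that actually occurs for $\Delta(\lambda)$ in the affine setting); then \Lem{lemPilambda}(ii) gives existence but not uniqueness of the expansion $\alpha=\sum c_i\beta_i$, so the sign-coherence argument needs repair, and Kac's proof---which rests on the fundamental chamber of a linearly independent base being a fundamental domain for the Tits cone---is not available as stated. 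The assertion that a reflection subgroup of $W$ is a Coxeter group on the canonical generators $\{r_{\beta}\}_{\beta\in\Pi(\Delta')}$ is a theorem in its own right (Deodhar, Dyer; in this setting \cite{KT98}, 2.2.8--2.2.9), and ``verifying the strong exchange condition'' is exactly the content you would still have to supply. As written, your (ii) is a plausible plan rather than a proof; either cite \cite{KT98} as the paper does, or invoke the reflection subgroup theorem explicitly, and the rest of your argument goes through.
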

\begin{proof}
By~\cite{KT98}, 2.2.8--2.2.9, if $B$ is symmetrizable, then (i) and (ii) holds.
{Using (ii) and~\Lem{lemmaxinorb} we
obtain (iii).}
\end{proof}

\subsection{Characters and supercharacters}\label{Nnu}
Let $N$ be a $\fg$-module which is locally finite over $\fh$. We set
$$N_{\nu}:=\{v\in N|\ \forall h\in\fh\ \exists s\ (h-\nu(h))^sv=0\},\ \
\supp(N):=\{\nu\in\fh^*|\ N_{\nu}\not=0\}$$
and say that $v$ has ``weight $\nu$'' if $v\in N_{\nu}$.
Take $N$ such that  $\mu-\nu\in\mathbb{Z}\Delta$
for all $\mu,\nu\in\supp(N)$ (this holds
for every  indecomposable module). Then $\Delta(\nu)=\Delta(\mu)$
for all $\nu,\mu\in\supp(N)$. We set
$$\Delta(N):=\Delta(\nu)=
\{\alpha\in W\Pi_{pr}|\ \forall\nu\in \supp(N)\ \
\nu(\alpha^{\vee})\in \mathbb{Z}\},\ \ \Pi(N):=\Pi(\lambda),\ \ W(N):=W(\lambda)$$
($\Delta(N)$ is a root subsystem of
$W\Pi_{pr}$).
Note that $\Delta(N)=\Delta(\Res^{\fg}_{\fg_{pr}}N)$.

For a supervector space $E$ we write
$$\sdim E=\dim E_{\ol{0}}+\epsilon \dim E_{\ol{1}},$$
where $\epsilon$ is a formal variable satisfying $\epsilon^2=1$.
If all weight spaces
$N_{\nu}$ are finite-dimensional, we introduce
$$\ch_{\epsilon} N:=\sum_{\nu\in\fh^*} \sdim N_{\nu}e^{\nu}.$$

The usual character (resp., supercharacter) is obtained from $\ch_{\epsilon} N$ by evaluating
$\epsilon=1$ (resp., $\epsilon=-1$):
$$\ch N=\sum_{\nu\in\fh^*} \dim N_{\nu}e^{\nu}$$
and $\supp(N)=\supp(\ch N)$.

For every indecomposable module $N$ over Kac-Moody superalgebra
$\ch N$ and $\sch N$ are related as follows.
Take $\lambda\in \supp(N)$. Then
$$\supp(e^{-\lambda}\ch N)\subset \mathbb{Z}\Sigma$$
and
$$e^{-\lambda}\ch N=\pm \pi(e^{-\lambda}\sch N),$$
where the automorphism $\pi$ is defined by
$\pi(e^{\mu}):=p(\mu) e^{\mu}$ for $\mu\in\mathbb{Z}\Sigma$.
This follows form the fact that every weight space of $N$ is either
even or odd.

\subsection{Categories $\CO^{fin},\CO$ and $\CO^{inf}$}\label{COK}
Fix $\Sigma\in\cB$ and set $\fn^+:=\oplus_{\alpha\in\Delta^+}\fg_{\alpha}$.

We denote by $\CO$ the category $O$ introduced in~\cite{DGK}:
this is a full category of $\fg$-modules $N$ satisfying
the following conditions: $\fh$ acts diagonally with  finite-dimensional
eigenspaces  and
$\supp N\subset \bigcup_{i=1}^ s (\lambda_i-Q^+)$, where
$\{\lambda_i\}_{i=1}^s\subset\fh^*$.
We denote by $\CO^{fin}$ the full subcategory of $\CO$
consisting of finitely generated modules (this category is  introduced in~\cite{BGG}).
The category $\CO$ is  equipped with an {involutive}  contragredient
duality functor $N\mapsto N^{\#}$.
If $\fg$ is finite-dimensional, this functor  restricts to a duality functor on $\CO^{fin}$.

\subsubsection{}\label{hwt}
\begin{defn}{defnOinf}
Let ${\CO}^{inf}$ be the full category of $\fg$-modules $N$ with the
following properties:

(C1) $\fh$ acts diagonally on $N$;

(C2) every $v\in N$ generates a finite-dimensional $\fn^+$-submodule of $N$.

\end{defn}

For a $\fg$-module $N$
we call a vector $v\in N$ {\em singular} if $v$ is an
$\fh$-eigenvector and
$\fn^+v=0$. We say that $N$ is a highest weight module if
$N$ is generated by a singular vector.

\subsubsection{}\label{Ofininf}
One has $\CO^{fin}\subset \CO\subset \CO^{inf}$.

One readily sees that $N\in {\CO}^{inf}$ if and only if
any cyclic submodule of $N$ lies in the category $\CO^{fin}$; thus
${\CO}^{inf}$ is the inductive completion of $\CO^{fin}$.

On the other hand, $N\in\CO^{fin}$ if and only if $\fh$ acts diagonally
on $N$ and $N$ admits a finite filtration whose quotients are highest weight modules,
see~\Lem{lemtildeCO}.

It is easy to see that highest weight modules lie in $\CO^{fin}$
for any choice of $\Sigma'\in\cB$, see~\cite{S3}, Lemma 10.2. As a result,
the categories $\CO^{fin}$ and $\CO^{inf}$ do not depend
on the choice of $\Sigma'\in\cB$. By contrast, the category
$\CO$ depends on the choice of $\Sigma'\in\cB$.

\subsubsection{}
\begin{lem}{lemOOpr}
  Let $\fg$ have non-isotropic type.
For  $N\in\CO(\fg)$ we have
$\Res^{\fg}_{\fg_{pr}}N\in\CO(\fg_{pr})$.
\end{lem}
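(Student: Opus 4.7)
The plan is to verify directly the three defining axioms of $\CO(\fg_{pr})$ for $\Res^{\fg}_{\fg_{pr}}N$. Since $\fg_{pr}$ contains $\fh$ by its very definition in~\S\ref{fgB}, the $\fh$-action is identical before and after restriction; thus diagonalizability of $\fh$ and finite-dimensionality of weight spaces are inherited for free from $N\in\CO(\fg)$. The only non-trivial axiom to check is that $\supp N$ is contained in a finite union of sets of the form $\mu-Q^+_{pr}$, where $Q^+_{pr}:=\mathbb{Z}_{\geq 0}\Pi_{pr}$. I would reduce this to a combinatorial identity $Q^+=Q^+_{pr}+S$ for a suitable finite set $S\subset Q^+$.

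The key step is the explicit description of $\Pi_{pr}$ in the non-isotropic case. Since $\fg$ has non-isotropic type, conditions (A0) and (A1) give $a_{ii}=2$ for every $i$. For $\alpha_i\in\Sigma$ with $p(i)=0$ one has $\alpha_i\in\Delta_{\ol{0}}\cap\Sigma$, so $\alpha_i\in\Pi_{pr}$; for $\alpha_i\in\Sigma$ with $p(i)=1$ the subalgebra $\langle e_i,f_i,\alpha_i^{\vee}\rangle$ is $\fosp(1|2)$, so $2\alpha_i\in\Delta_{\ol{0}}$ and hence $2\alpha_i\in\Pi_{pr}$. Thus
$$\Pi_{pr}=\{\alpha_i:p(i)=0\}\cup\{2\alpha_i:p(i)=1\}.$$
Writing any $\nu=\sum_i n_i\alpha_i\in Q^+$ and splitting each coefficient $n_i$ with $p(i)=1$ as $n_i=2m_i+\epsilon_i$ with $\epsilon_i\in\{0,1\}$, one absorbs the doubled odd part $\sum_{p(i)=1}m_i(2\alpha_i)$ into $Q^+_{pr}$ and leaves only a residue in the finite set
$$S:=\Bigl\{\sum_{p(i)=1}\epsilon_i\alpha_i:\epsilon_i\in\{0,1\}\Bigr\},$$
giving the desired identity $Q^+=Q^+_{pr}+S$.

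Combining the two steps, for $N\in\CO(\fg)$ with $\supp N\subset\bigcup_{i=1}^{s}(\lambda_i-Q^+)$ we obtain
$$\supp N\ \subset\ \bigcup_{i=1}^{s}\bigcup_{\sigma\in S}(\lambda_i-\sigma-Q^+_{pr}),$$
which is still a finite union of sets of the form required in~\Defn{defnOinf}; together with the inherited $\fh$-data this yields $\Res^{\fg}_{\fg_{pr}}N\in\CO(\fg_{pr})$. I do not anticipate any real obstacle here; the substance of the lemma is precisely the identity $Q^+=Q^+_{pr}+S$, which is particular to non-isotropic type (in the isotropic case some odd simple roots satisfy $a_{ii}=0$, so one cannot uniformly replace $\alpha_i$ by $2\alpha_i\in\Pi_{pr}$ modulo a finite residue).
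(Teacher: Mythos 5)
Your proof is correct and follows essentially the same route as the paper: the paper's argument is precisely that the quotient $Q^+/Q^+_{pr}$ is finite, consisting of the sums $\sum_{\alpha\in\Sigma}x_\alpha\alpha$ with $x_\alpha\in\{0,1\}$ and $x_\alpha=0$ for even $\alpha$, which is your identity $Q^+=Q^+_{pr}+S$. You merely spell out the intermediate step (the explicit description of $\Pi_{pr}$ in the non-isotropic case) that the paper leaves implicit.
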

\begin{proof}
  Set $Q^+_{pr}:=\mathbb{Z}_{\geq 0}\Pi_{pr}$. {If $\fg$ has non-isotropic type}, then the quotient $Q^+/Q^+_{pr}$  consists of the sums
  $\sum_{\alpha\in \Sigma} x_{\alpha}\alpha$, where $x_{\alpha}\in
  \{0,1\}$ and $x_{\alpha}=0$ for even $\alpha$;
  in particular, this quotient is finite and thus
  $\Res^{\fg}_{\fg_{pr}}N\in\CO(\fg_{pr})$.
  \end{proof}

\subsubsection{}\label{ODelta}
For $\lambda\in\fh^*$ we denote by $M(\lambda)$
the Verma module of  highest weight $\lambda$ and
by $L(\lambda)$ its unique simple quotient. Every
simple module in $\CO$  is isomorphic to
$L(\lambda)$ for some $\lambda\in\fh^*$;
these modules  are self-dual with respect to the contragredient
duality functor  ($L(\lambda)\cong L(\lambda)^{\#}$).
By~\cite{DGK}, Prop. 3.4, the notion $[N:L(\lambda)]$
is well-defined for each object $N\in\CO$ and
$$\ch N=\sum_{\lambda\in\fh^*} [N:L(\lambda)]\ch L(\lambda).$$

Retain notation of~\S~\ref{Weyldenom}.
One has $R\ch M(\lambda)=e^{\lambda}$, $\ch L(\lambda)=\sum_{\mu\leq \lambda}b_\mu\ch M(\mu)$ with $b_\mu\in\mathbb Z$ and hence
$$\ch N=\sum_{\lambda\in\fh^*} a_{\lambda}\ch M(\lambda),
\ \ \ R\ch N=\sum_{\lambda\in\fh^*} a_{\lambda}e^{\lambda}
$$
for some coefficients $a_{\lambda}\in\mathbb{Z}$. Note that the first
identity
is well-defined due to restriction on $\supp(N)$.

Retain notations of~\S~\ref{Deltalambda}. For a
 root subsystem  $\Delta'\subset W\Pi_{pr}$  we denote by
$\CO_{\Delta'}$ (resp.,  of $\CO_{\Delta'}^{fin}$)
the full subcategory of $\CO$ (resp., $\CO^{fin}$)
with the indecomposable objects $N$ such that
$\Delta(N)=\Delta'$.
Since for any indecomposable module $N$
one has $\Delta(N)=\Delta(\lambda)$ if $[N:L(\lambda)]\not=0$,
the category  $\CO$ (resp., $\CO^{fin}$) is the direct sum of
the subcategories $\CO_{\Delta'}$ (resp.,  $\CO_{\Delta'}^{fin}$).

\subsubsection{Typical weights}\label{lambdalambda'}
For every $\Sigma'\in\cB$ we denote by $M(\lambda;\Sigma')$ and by
$L(\lambda;\Sigma')$  the Verma module and the simple module
for the base $\Sigma'$.
Let $\Sigma'=r_{\beta}\Sigma$ (where $\beta\in\Sigma$ is isotropic)
and $\rho,\rho'$ be the corresponding
Weyl vectors ($\rho'=\rho+\beta$).
If $(\lambda+\rho)(\beta^{\vee})\not=0$, then
$M(\lambda;\Sigma)=M(\lambda';\Sigma')$ and $L(\lambda;\Sigma)=L(\lambda';\Sigma')$ where
 $\lambda+\rho=\lambda'+\rho'$; if $(\lambda+\rho)(\beta^{\vee})=0$, then
$L(\lambda;\Sigma)=L(\lambda;\Sigma')$.

We call a weight $\lambda\in\fh^*$   {\em typical } if
$\lambda(\alpha^{\vee})\not=0$ for every isotropic root $\alpha\in\Delta_{re}$.
Note that the set of typical weights is $W$-invariant.
The simple highest weight module $L(\lambda)$ is called {\em typical }
if $\lambda+\rho$ is a typical weight.
By above, if $L(\lambda)=L(\lambda;\Sigma)$ is  typical,
then for any $\Sigma'\in\cB$ one has
$L(\lambda;\Sigma)=L(\lambda';\Sigma')$ where
 $\lambda+\rho=\lambda'+\rho'$; therefore the notion of typicality
 for a simple highest weight module does not depend on the choice of $\Sigma'\in\cB$.

\subsection{Symmetrizable Kac-Moody superalgebras}
\label{KM}
From now on till the end of this section
$\fg$ is a symmetrizable Kac-Moody superalgebra, so
 $\fg$ admits a non-degenerate invariant bilinear form which
induces a  non-degenerate  $W$-invariant bilinear form $(-|-)$ on $\fh^*$.

\subsubsection{}\label{delta}
We say that $\alpha\in\Delta$
is isotropic (resp., non-isotropic) if $(\alpha|\alpha)=0$
(resp., $(\alpha|\alpha)\not=0$); it is easy to see that this definition
agrees with the definition given above for  the real roots of
the general Kac--Moody algebra.
Using identification  $\fh^*\iso\fh$ given by $(-|-)$, we
  consider $\alpha^\vee$ as an element in $\fh^*$. Note that $\alpha^{\vee}$ is proportional to $\alpha$ for any real root $\alpha$.
  If $\alpha$ is non-isotropic real root we have
  $$\alpha^{\vee}:=\frac{2\alpha}{(\alpha|\alpha)}.$$

We  call $\lambda\in\fh^*$ {\em critical } if
$2(\lambda|\alpha)\in \mathbb{Z}||\alpha||^2$ for some imaginary root
$\alpha$ and {\em non-critical} otherwise. {The usual definition
differs by a shift by $\rho$}.)
The set of critical weights is ${W}$-invariant.

Recall that if $\fg$ is affine, then
$\Delta_{im}=\mathbb{Z}\delta\setminus\{0\}$, where $\delta$ is the minimal positive imaginary root; one has
$(\delta|\Delta)=0$, that is $(\Delta_{im}|\Delta)=0$.

\subsection{Blocks in $\CO$}\label{blocks}
Consider an equivalence relation on the set of isomorphism classes of simple modules in $\CO$ generated
by the set
$$\{(L_1,L_2)|\ L_1,L_2\text{ are simple in } \CO,\  \Ext^1_{\CO}(L_1,L_2)\not=0\}.$$
For each class of this equivalence relation the corresponding
block  is the full subcategory of $\CO$ with the objects $N$ such that
all simple subquotients of $N$ belong to this class.

\subsubsection{}\label{cK}
Set  $\ol{\Delta}^+:=\{\alpha\in\Delta^+|\ \frac{\alpha}{2}\not\in\Delta\}$ and

\begin{equation}\label{eqcK}
\begin{array}{ll}
\cK:=&\{(\lambda,\lambda-m\alpha)\in \fh^*\times\fh^*|\ \alpha\in\ol{\Delta}^+,\
m\in\mathbb{Z}_{>0}\ \text{ s.t. } 2(\lambda+\rho|\alpha)=m(\alpha|\alpha),\\
&\ \text{and $m$ is odd if $\alpha$
is odd and non-isotropic, $m=1$ if $\alpha$ is isotropic}
\}\end{array}\end{equation}

Note that for a non-critical typical $\lambda$ one has $(\lambda,\lambda')\in\cK$
if and only if $\lambda'\leq \lambda$ and
$\lambda'=r_{\alpha}\lambda$ for $\alpha\in \Delta(\lambda)$.

\subsubsection{}\label{equiv}
We denote by $\succeq$  the  reflexive and transitive relation on $\fh^*$ generated by $\cK$
and by $^{\sim}_{\CO}$ the equivalence relation  on $\fh^*$ generated by $\cK$
(this means that $\succeq$  is the minimal reflexive and transitive relation
such that $(\lambda,\nu)\in \cK$ implies
$\lambda\succeq\nu$ and $^{\sim}_{\CO}$ is the minimal equivalence relation  on $\fh^*$
with the similar property). 

From~\cite{KK}, Thm. 2 and Prop. 4.1 (the proofs are the same for superalgebras) we have

\begin{equation}\begin{array}{l}\label{KKthm}
[M(\lambda):L(\nu)]\not=0\ \Longrightarrow\ \lambda\succeq\nu;\\
(\lambda,\lambda-m\alpha)\in \cK\ \Longrightarrow\
\Hom_{\fg}(M(\lambda-m\alpha), M(\lambda))\not=0.\end{array}
\end{equation}
If $\fg$ has non-isotropic type, then
$[M(\lambda):L(\nu)]\not=0$ if and only if  $\lambda\succeq\nu$.

\subsubsection{}
\begin{cor}{corblock}
 The modules $L(\lambda), L(\nu)$ lie in the same block  if and only if
 $\lambda\,^\sim_{\CO}\,\nu$.
\end{cor}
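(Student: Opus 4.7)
My plan is to prove the two implications separately.

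For the ``if'' direction, since the block relation is an equivalence relation and $^\sim_{\CO}$ is by definition generated by the pairs in $\cK$, it suffices to verify that $(\lambda,\lambda-m\alpha)\in\cK$ forces $L(\lambda)$ and $L(\lambda-m\alpha)$ into the same block. The second line of~(\ref{KKthm}) gives a non-zero homomorphism $M(\lambda-m\alpha)\to M(\lambda)$, whose image is a highest weight submodule of $M(\lambda)$ with simple top $L(\lambda-m\alpha)$. Hence both $L(\lambda)$ and $L(\lambda-m\alpha)$ appear as composition factors of the single module $M(\lambda)$. Since $M(\lambda)$ is indecomposable (its weight space $M(\lambda)_\lambda$ is one-dimensional), any composition series exhibits successive length-two subquotients with non-split extensions, which places all composition factors of $M(\lambda)$ in the same block.

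For the ``only if'' direction, it suffices, by definition of the block equivalence, to show that $\Ext^1_\CO(L(\lambda),L(\nu))\neq 0$ implies $\lambda\,^\sim_{\CO}\,\nu$. The case $\lambda=\nu$ is trivial, so assume $\lambda\neq\nu$. The contragredient duality $N\mapsto N^{\#}$ on $\CO$ is exact and involutive with $L(\mu)^{\#}\cong L(\mu)$, so it induces a symmetry $\Ext^1_\CO(L(\lambda),L(\nu))\cong\Ext^1_\CO(L(\nu),L(\lambda))$; after swapping $\lambda$ and $\nu$ if necessary, I may assume $\lambda\not<\nu$. Pick a non-split extension $0\to L(\nu)\to N\to L(\lambda)\to 0$; under this assumption $\lambda$ is maximal in $\supp(N)=\supp L(\lambda)\cup\supp L(\nu)$, so the weight-$\lambda$ subspace of $N$ is annihilated by $\fn^+$ and contains a singular vector $v$. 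The submodule $U(\fg)v$ is a quotient of $M(\lambda)$; if it were a proper submodule of $N$, simplicity of $L(\nu)$ would force $U(\fg)v\cap L(\nu)=0$ and hence a direct sum decomposition $N=U(\fg)v\oplus L(\nu)$, contradicting non-splitness. Thus $N$ itself is a quotient of $M(\lambda)$, yielding $[M(\lambda):L(\nu)]\neq 0$. The first line of~(\ref{KKthm}) then gives $\lambda\succeq\nu$, hence $\lambda\,^\sim_{\CO}\,\nu$.

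The main obstacle is the ``only if'' direction; the crux is arranging, via the contragredient duality, that one of the two highest weights is maximal in the support of the extension, so that a singular vector is available and the extension is realized as a quotient of a Verma module, placing it within the reach of~(\ref{KKthm}).
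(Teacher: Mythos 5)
Your proof is correct and follows essentially the same route as the paper: the ``if'' direction via the Verma homomorphism from~(\ref{KKthm}) together with the standard fact that all simple subquotients of the indecomposable module $M(\lambda)$ lie in one block, and the ``only if'' direction by using the contragredient duality to arrange $\nu-\lambda\notin Q^+$, realizing the non-split extension as a quotient of $M(\lambda)$, and invoking the first line of~(\ref{KKthm}). One small caveat: your justification that an indecomposable module links all its composition factors (``any composition series exhibits successive length-two non-split subquotients'') is not literally true as stated, but the fact itself is standard and is exactly what the paper also invokes without proof.
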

\begin{proof}
Take  $(\lambda,\lambda-m\alpha)\in \cK$.
By~(\ref{KKthm}),
$[M(\lambda):L(\lambda-m\alpha))\not=0$. Since
$M(\lambda)$ is indecomposable, $L(\lambda), L(\lambda-m\alpha)$ are in the same block.
Thus $L(\lambda), L(\nu)$ lie in the same block  if
 $\lambda\,^\sim_{\CO}\,\nu$.

Now let $L(\lambda),L(\nu)$ be such that $\nu\not=\lambda$ and
$\Ext^1_{\CO}(L(\lambda),L(\nu))\not=0$. Let
$$0\to L(\nu)\to N\to L(\lambda)\to 0$$
be a non-splitting exact sequence.
If $\nu-\lambda\not\in Q^+$, then
$N$ is a quotient of $M(\lambda)$ and $\lambda\succeq\nu$ by~(\ref{KKthm}).
If $\nu-\lambda\in Q^+$ the same argument for $N^{\#}$ gives $\nu\succeq\lambda$.
Thus in both cases $\lambda\,^\sim_{\CO}\,\nu$.
Hence $L(\lambda),L(\nu)$ are equivalent in the sense of~\ref{blocks} implies $\lambda\,^\sim_{\CO}\,\nu$.
\end{proof}

\subsubsection{}
\begin{lem}{tipi}
If $\lambda\,^\sim_{\CO}\,\lambda'$, then $||\lambda||^2=||\lambda'||^2$ and

$\lambda$ is critical $\Longrightarrow\ $ $\lambda'$ is critical;

$\lambda$ is typical $\Longrightarrow\ $ $\lambda'$ is typical.
\end{lem}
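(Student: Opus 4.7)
The plan is to reduce to a single generating pair. Since $^\sim_{\CO}$ is the equivalence relation generated by $\cK$, it suffices to verify each of the three assertions for an arbitrary pair $(\lambda,\lambda')\in\cK$; the statement for general $\sim_{\CO}$-equivalent weights then follows by chaining and symmetry. Write $\lambda' = \lambda - m\alpha$ with $\alpha\in\ol\Delta^+$, $m\in\mathbb Z_{>0}$ and $2(\lambda+\rho|\alpha) = m(\alpha|\alpha)$, and split according to whether $\alpha$ is non-isotropic or isotropic. The key observation is that these two cases correspond to two very different operations on $\lambda+\rho$: if $(\alpha|\alpha)\neq 0$ then $m=(\lambda+\rho|\alpha^\vee)$ and $\lambda'+\rho = (\lambda+\rho) - m\alpha = r_\alpha(\lambda+\rho)$, so the $\cK$-step is exactly the twisted Weyl reflection $r_\alpha.\lambda$; if $(\alpha|\alpha)=0$, the definition of $\cK$ forces $m=1$ and $(\lambda+\rho|\alpha)=0$, so $\lambda'+\rho = (\lambda+\rho) - \alpha$ differs from $\lambda+\rho$ by an isotropic vector orthogonal to it.

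Given this setup, the norm identity (most naturally read as $(\lambda+\rho|\lambda+\rho) = (\lambda'+\rho|\lambda'+\rho)$, which is the quantity directly $\cK$-invariant) is immediate: $W$-invariance of $(-|-)$ handles the non-isotropic case, and a direct expansion using $(\lambda+\rho|\alpha) = 0 = (\alpha|\alpha)$ handles the isotropic case. For typicality of $\lambda+\rho$, the non-isotropic case transfers because $r_\alpha$ permutes the isotropic real roots and preserves the form, so $(\lambda+\rho|\gamma)\neq 0$ for all isotropic real $\gamma$ if and only if the same holds for $\lambda'+\rho$. The isotropic case is vacuous for the implication: the defining relation $(\lambda+\rho|\alpha)=0$ with $\alpha$ isotropic real already shows $\lambda+\rho$ is non-typical. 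To propagate typicality along a chain, I would observe that $(\lambda'+\rho|\alpha) = (\lambda+\rho|\alpha) - (\alpha|\alpha) = 0$ as well, so both ends of an isotropic $\cK$-edge are non-typical; hence no isotropic step can appear on any chain starting from a typical weight.

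For criticality, the non-isotropic case follows from the stated $W$-invariance of the set of critical weights. In the isotropic case, for any imaginary root $\beta$ one has $2(\lambda'|\beta) - 2(\lambda|\beta) = -2(\alpha|\beta)$; in the affine setting, which is the principal application of this paper (cf.\ the result for $\mathfrak{osp}(1|2\ell)^{(1)}$), $\beta\in\mathbb Z\delta$ and $(\delta|\Delta)=0$ make this vanish, so criticality transports trivially. I expect this last step to be the main obstacle: beyond affine type one must invoke integrality of the pairing between a real root $\alpha$ and an imaginary root $\beta$ in a symmetrizable Kac--Moody algebra, ensuring $2(\alpha|\beta)\in\mathbb Z\|\beta\|^2$, so that the criticality condition indeed transports along isotropic edges.
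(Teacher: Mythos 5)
Your reduction to a single $\cK$-edge and your treatment of the two kinds of \emph{real} roots are fine, but there is a genuine gap: the set $\ol{\Delta}^+$ in the definition of $\cK$ is $\{\alpha\in\Delta^+\mid \alpha/2\notin\Delta\}$, which contains the positive \emph{imaginary} roots, and your whole case analysis silently assumes $\alpha$ is real. In the branch $(\alpha|\alpha)\neq 0$ you identify the edge with the Weyl reflection $r_\alpha$ and then invoke $W$-invariance of the set of critical weights and the fact that $r_\alpha$ permutes the isotropic real roots; for an imaginary $\alpha$ with $(\alpha|\alpha)<0$ (which occurs in non-isotropic indefinite type) the map $\mu\mapsto\mu-(\mu|\alpha^{\vee})\alpha$ is not an element of $W$, and neither claim applies. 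The paper handles exactly this case by a different observation: if $\alpha$ is imaginary, the edge condition $2(\lambda|\alpha)=m(\alpha|\alpha)$ already makes $\lambda$ critical, and $2(\lambda'|\alpha)=-m(\alpha|\alpha)$ makes $\lambda'$ critical as well, so there is nothing to transport (and typicality is vacuous there, since a superalgebra of non-isotropic type has no isotropic real roots at all). Similarly, in your branch $(\alpha|\alpha)=0$ the root may be an imaginary null root rather than an isotropic real one, in which case $(\lambda+\rho|\alpha)=0$ does \emph{not} make the weight atypical, so your ``the isotropic case is vacuous for typicality'' step breaks as stated.

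The ``main obstacle'' you flag at the end is also resolved the wrong way. An isotropic real root exists only when $\fg$ has isotropic type, and by the classification recalled in \S\ref{isotropictype} a symmetrizable superalgebra of isotropic type is finite-dimensional or affine; hence either $\Delta_{im}=\emptyset$ or $(\Delta|\Delta_{im})=0$, and the quantity $2(\alpha|\beta)$ you worry about simply vanishes. There is no general integrality statement $2(\alpha|\beta)\in\mathbb{Z}||\beta||^2$ for a real $\alpha$ against an imaginary $\beta$ in indefinite type, and none is needed. This dichotomy --- isotropic type implies finite-dimensional or affine, while non-isotropic type has no isotropic real roots so typicality is automatic --- is the structural input your argument is missing; by contrast, your verification of $||\lambda'||^2=||\lambda||^2$ by direct expansion is correct for every $\alpha$, real or imaginary, and needs no change.
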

\begin{proof} Let us prove the first statement. For affine case it is trivial
{since $(\Delta|\Delta_{im})=0$ }
by~\S~\ref{delta}{;} hence the statement is clear for isotropic type.
Now consider $\fg$ of non-isotropic type.
It suffices to check the case  $\lambda'=\lambda-m\alpha$,
where
 $2(\lambda|\alpha)=m(\alpha|\alpha)$ and $m\in\mathbb{Z}$
 such that $m$ is even if $\alpha$ is odd and non-isotropic.

 If  $\alpha$ is real, then
 $\lambda'=r_{\alpha}\lambda$. Since the  set of imaginary roots is $W$-invariant
$\lambda$ is critical implies that $\lambda'$
is critical. If $\alpha$ is imaginary, then
$2(\lambda'|\alpha)=-m(\alpha|\alpha)$, so
$\lambda,\lambda'$ are both critical.

The second assertion is similar.
 \end{proof}

\subsubsection{}\label{equiv}
Let $\Lambda$ be an equivalence
class with respect to $^\sim_{\CO}$.
We denote by $\CO_{\Lambda}$ the full subcategory
of $\CO$  whose objects $N$ satisfy the property:
$[N:L(\lambda)]\not=0$
implies $(\lambda-\rho)\in\Lambda$. 
By~\Cor{corblock}, $\CO_{\Lambda}$
is a block in the category $\CO$.
By Theorem 4.2,~\cite{DGK} (the statement and the proof are the same for superalgebras)
the category $\CO$ is the direct sum of categories $\CO_{\Lambda}$.

One has

\begin{equation}\label{suppN}
\begin{array}{ll}
\text{ for } M(\lambda)\in\CO_{\Lambda}\ & \supp(Re^{\rho}\ch M(\lambda))\subset \Lambda\cap (\lambda-Q^+),\\
\text{ for } N\in\CO_{\Lambda}\ &
\supp(Re^{\rho}\ch N)\subset \Lambda.\end{array}
\end{equation}

\subsubsection{}\label{modules}
The  module $L(\lambda)$ is called {\em non-critical }
if $\lambda+\rho$ is a non-critical weight. By~\S~\ref{isotropictype}
 any symmetrizable Kac--Moody superalgebra of isotropic type
 is either finite-dimensional or affine, so the criticality of a simple module $L\in\CO$
does not depend on the choice of  $\Sigma'\in\cB$ (if
 the module $L:=L(\lambda,\Sigma)$ is critical,
then it is critical for every $\Sigma'\in\cB$).

We call a module $N\in\CO$ {\em critical} (resp., {\em non-critical })
if $[N:L(\lambda)]\not=0$ implies that $L(\lambda)$ is critical (resp., non-critical).
We call a module $N\in\CO$ {\em typical} (resp., {\em atypical})
if $[N:L(\lambda)]\not=0$ implies that $L(\lambda)$ is typical (resp., atypical).
We denote by $\CO_{crit}$ (resp., $\CO_{noncrit}, \CO_{typ}, \CO_{atyp}$)
the full subcategory of $\CO$ consisting of critical (resp., non-critical,  non-critical
typical; non-critical atypical) modules.

\subsubsection{}
\begin{cor}{}
One has $\CO=\CO_{crit}\oplus \CO_{noncrit}$ and $\CO_{noncrit}=\CO_{typ}\oplus \CO_{atyp}$.
\end{cor}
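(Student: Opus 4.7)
My plan is to derive this corollary as a direct consequence of the block decomposition recalled in~\ref{equiv} together with \Lem{tipi}. I will invoke the decomposition $\CO = \bigoplus_\Lambda \CO_\Lambda$ (Theorem~4.2 of \cite{DGK}, reproduced in~\ref{equiv}), which reduces the claim to showing that each individual block $\CO_\Lambda$ is contained either in $\CO_{crit}$ or in $\CO_{noncrit}$, and in the latter case either in $\CO_{typ}$ or in $\CO_{atyp}$.

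The heart of the argument will be \Lem{tipi}: if $\lambda\,^\sim_\CO\, \lambda'$ then $\lambda$ is critical iff $\lambda'$ is critical, and $\lambda$ is typical iff $\lambda'$ is typical. Consequently the properties ``critical'' and (within the non-critical classes) ``typical'' are constant on each $^\sim_\CO$-equivalence class $\Lambda$, so it makes sense to declare $\Lambda$ itself critical (resp.\ typical). By the defining condition $[N:L(\lambda)]\ne 0 \Rightarrow (\lambda-\rho)\in \Lambda$ of $\CO_\Lambda$ given in~\ref{equiv}, every simple subquotient of an object of $\CO_\Lambda$ has the criticality (and, when relevant, typicality) determined by $\Lambda$; this yields the desired inclusions $\CO_\Lambda \subset \CO_{crit}$ or $\CO_\Lambda \subset \CO_{noncrit}$, and analogously in the typical/atypical split.

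Grouping blocks then gives $\CO_{crit} = \bigoplus_{\Lambda\ \text{critical}} \CO_\Lambda$ and $\CO_{noncrit} = \bigoplus_{\Lambda\ \text{non-critical}} \CO_\Lambda$, with the further splitting $\CO_{noncrit} = \CO_{typ} \oplus \CO_{atyp}$ obtained in the same manner by summing over typical, resp.\ atypical, non-critical classes. I do not anticipate any substantive obstacle: the entire content has already been packaged in \Lem{tipi} and in the block decomposition theorem of~\cite{DGK}, so this corollary amounts to bookkeeping once those two ingredients are in place.
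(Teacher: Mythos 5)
Your proof is correct and is exactly the argument the paper intends: the corollary is stated without proof precisely because it follows immediately from the block decomposition $\CO=\bigoplus_\Lambda\CO_\Lambda$ of~\ref{equiv} together with \Lem{tipi}, which shows criticality and typicality are constant on each $^\sim_\CO$-class. Your bookkeeping of grouping the blocks accordingly is the whole content.
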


\subsubsection{Remark}\label{typnoncrit}
If $\lambda$ is typical and non-critical, then
the equivalence class $\Lambda$ containing $\lambda$ (see~\S~\ref{cK}) is equal to
${W}(\lambda)\lambda$.

Let $X\subset\fh^*$ be the set of typical non-critical
 weights $\nu$ satisfying $\nu(\alpha^{\vee})\geq 0$
 for all  $\alpha\in\Pi(\nu)$. Then $\nu$ is maximal
 in $W(\nu)\nu$ and the argument of Thm. 5.5 of~\cite{AKMPP} shows that
if $N\in\CO$ is such that
any simple subquotient of $N$ is of the form $L(\lambda-\rho)$ for some
$\lambda\in X$, then $N$ is completely reducible.

\subsection{}
\begin{prop}{supptyp}
 Let $\fg$ be a symmetrizable Kac-Moody superalgebra.  For
   a typical simple highest weight module $L$
the expression $Re^{\rho}\ch L$ does not depend on $\Sigma\in\cB$.
\end{prop}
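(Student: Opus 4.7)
The plan is to reduce to a single odd reflection and then do an explicit calculation using how $R$, $\rho$ and the highest weight of $L$ transform. Since $\cB$ is by definition the set of bases obtained from $\Sigma$ by a chain of odd reflections, and the identity $Re^{\rho}\ch L$ is transitive in nature, it suffices to show that if $\Sigma'=r_{\beta}\Sigma$ for an isotropic simple root $\beta\in\Sigma$, then
\[
R(\Sigma')e^{\rho'}\ch L\ =\ R(\Sigma)e^{\rho}\ch L.
\]

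The main step is the computation $R(\Sigma')=e^{-\beta}R(\Sigma)$. Using the formula recalled in~\S\ref{contra} for how positive roots change under an odd reflection, together with the identity $\Delta^+(\Sigma')\cap\Delta_{\ol 0}=\Delta^+(\Sigma)\cap\Delta_{\ol 0}$, I get $R_{\ol 0}(\Sigma')=R_{\ol 0}(\Sigma)$. For the odd part, only the simple isotropic root $\beta$ changes sign, i.e. $\Delta^+_{\ol 1}(\Sigma')=(\Delta^+_{\ol 1}(\Sigma)\setminus\{\beta\})\cup\{-\beta\}$, and $\dim\fg_\beta=1$, so
\[
\frac{R_{\ol 1}(\Sigma')}{R_{\ol 1}(\Sigma)}\ =\ \frac{1+e^{\beta}}{1+e^{-\beta}}\ =\ e^{\beta}.
\]
Combining, $R(\Sigma')=R_{\ol 0}(\Sigma')R_{\ol 1}(\Sigma')^{-1}=e^{-\beta}R(\Sigma)$. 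Since the normalization rule for Weyl vectors in~\S\ref{useful} gives $\rho'=\rho+\beta$, I obtain $R(\Sigma')e^{\rho'}=R(\Sigma)e^{\rho}$, so the prefactor $Re^{\rho}$ is already invariant under a single odd reflection (and hence under all chains).

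It remains to handle $\ch L$. Here typicality enters crucially: if $L=L(\lambda;\Sigma)$ with $\lambda+\rho$ typical, then by definition $(\lambda+\rho)(\alpha^{\vee})\neq 0$ for every isotropic real root $\alpha$, in particular for $\beta$. The discussion in~\S\ref{lambdalambda'} then gives $L(\lambda;\Sigma)=L(\lambda';\Sigma')$ where $\lambda'=\lambda+\rho-\rho'=\lambda-\beta$, i.e.\ the two simple modules coincide as abstract $\fg$-modules. Therefore $\ch L$ is intrinsic to $L$ and does not change when we switch from $\Sigma$ to $\Sigma'$. Combining with the previous paragraph we conclude $R(\Sigma')e^{\rho'}\ch L=R(\Sigma)e^{\rho}\ch L$, and since any two elements of $\cB$ are connected by a chain of odd reflections, the assertion follows.

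The only potential obstacle is the case where a chain of odd reflections repeatedly uses non-simple odd roots, but every base in $\cB$ is reached from $\Sigma$ by reflecting in a \emph{simple} isotropic root of the current base, so induction on the length of such a chain reduces everything to the one-step calculation above. The only other delicate point is making sure typicality is preserved along the chain, which is immediate from the $W$-invariance and shift-by-$\rho$ invariance of the set of typical weights together with the fact that the property $(\lambda+\rho)(\alpha^{\vee})\neq 0$ for all isotropic real $\alpha$ is a condition on $\lambda+\rho$, which is unchanged when passing from $(\lambda,\Sigma)$ to $(\lambda',\Sigma')$.
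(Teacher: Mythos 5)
There is a genuine gap, and it sits exactly at the step you present as the main computation. The identity $R(\Sigma')=e^{-\beta}R(\Sigma)$, hence $R(\Sigma')e^{\rho'}=R(\Sigma)e^{\rho}$, is valid only in the field of rational functions. But $R$ is defined in~\S\ref{Weyldenom} as a \emph{formal series}: $R_{\ol{1}}^{-1}$ is expanded by geometric series into $\sum_{\nu\in Q^+}x_{\nu}e^{-\nu}$, and $Q^+$ depends on $\Sigma$. For $\Sigma'=r_{\beta}\Sigma$ the factor coming from the odd root $\pm\beta$ is expanded as $(1+e^{-\beta})^{-1}=\sum_{j\geq 0}(-1)^je^{-j\beta}$ on one side and as $(1+e^{\beta})^{-1}=\sum_{j\geq 0}(-1)^je^{j\beta}$ on the other; these two expansions are \emph{not} related by multiplication by $e^{\beta}$ as formal sums (one is supported in $-\mathbb{Z}_{\geq 0}\beta$, the other in $\mathbb{Z}_{\geq 0}\beta$). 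Consequently $R(\Sigma')e^{\rho'}\neq R(\Sigma)e^{\rho}$ in general. A sanity check: if your identity of prefactors were correct, the proposition would hold for \emph{every} simple highest weight module, with no typicality hypothesis; but for $\fsl(2|1)$ and the trivial (atypical) module one computes directly, as in~\S\ref{disc}, that the expansions of $Re^{\rho}$ with respect to different bases are genuinely different formal sums.

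This also shows that your invocation of typicality is not doing the work you attribute to it: $\ch L$ is intrinsic to the module $L$ for any base whatsoever (it is just $\sum_{\mu}\dim L_{\mu}\,e^{\mu}$), so identifying the highest weights $\lambda$ and $\lambda'$ contributes nothing to the invariance of $Re^{\rho}\ch L$. The paper's proof instead multiplies both expressions by $(1+e^{-\beta})$; this \emph{does} give an honest identity of formal series, $(1+e^{-\beta})Re^{\rho}\ch L=(1+e^{-\beta})R'e^{\rho'}\ch L$, because the ambiguously expanded factor cancels. Typicality then enters in an essential way: by~(\ref{suppN}) both sides are supported on the block $\Lambda$, all weights of $\Lambda$ have the same $||\cdot||^2$ by \Lem{tipi}, so two weights of $\Lambda$ differing by the isotropic root $\beta$ would have to be orthogonal to $\beta$ and hence atypical. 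Therefore no interference between $e^{\lambda}$ and $e^{\lambda-\beta}$ occurs, and the coefficients $a_{\lambda}$ and $a'_{\lambda}$ can be read off and shown equal. To repair your argument you would need to replace the rational-function manipulation by this (or an equivalent) formal-series argument; the reduction to a single odd reflection and the remark that typicality is preserved along the chain are fine.
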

\begin{proof}
  The statement is not {tautological   }
  only when $\fg$ has   isotropic type.
  In this case, $\fg$ is finite-dimensional or affine.
  Take $\Sigma\in\cB$ and let $\Sigma':=r_{\beta}\Sigma$, where
  $\beta\in\Sigma$ is isotropic; we denote by $R'$ the Weyl
  denominator for $\Sigma'$. Recall that $\rho'=\rho+\beta$.
One has
$$(1+e^{-\beta})Re^{\rho}=(1+e^{-\beta})R'e^{\rho'}.$$
 By~(\ref{suppN})
$$Re^{\rho}\ch L=\sum_{\lambda\in \Lambda} a_{\lambda}
e^{\lambda},\ \ \
R'e^{\rho'}\ch L=
\sum_{\nu\in \Lambda} a'_{\nu}
e^{\nu}.
$$
and thus
$$\sum_{\lambda\in\Lambda} a_{\lambda}
(e^{\lambda}+e^{\lambda-\beta})=\sum_{\lambda\in\Lambda} a'_{\lambda}
  (e^{\lambda}+e^{\lambda-\beta}).$$

    Take $\lambda,\nu\in\Lambda$. By~\ref{equiv}
    $||\lambda||^2=||\nu||^2$.
    If $\lambda=\nu-\beta$, then
    $(\lambda|\beta)=0$, so $\lambda$ is atypical. By~\Lem{tipi}, this
   contradicts to typicality of $L$.
     \end{proof}

\section{Enright functor}
In this section we recall the construction and main properties
of Enright functor (see~\cite{En}, \cite{D}). We assume that
$\fg$ is a Lie superalgebra  containing a subalgebra $\fsl_2\subset\fg$
which acts locally finitely on $\fg$. We also fix a standard basis $\{e,h,f\}$ of $\fsl_2$.
By our assumptions $\ad e,\ad f$ are locally nilpotent and $\ad h$ is diagonalizable.

In this section we do not assume
that $\fg$ is Kac-Moody. Our approach simplifies some proofs for
$a\not\in\mathbb{Z}$.

We denote by $U(\fg)$ the universal enveloping algebra of $\fg$.

\subsection{Twisted localization functor}\label{Mathieufunctor}
Let us recall the twisted localization functor introduced by O.~Mathieu in~\cite{M}.

\subsubsection{}\label{dD}
 For $s\in\mathbb{Z}_{\geq 0}$ and $u\in U(\fg)$ one has
$$f^su=\sum_{i=0}^{\infty} \binom{s}{i} (ad\ f)^i(u) f^{s-i}.$$
Combining locally nilpotency of $\ad f$ on $U(\fg)$ and the above formula,
one easily sees that  $\{f^s\}_{s=0}^{\infty}$ form an Ore set in
$U(\fg)$; since $f\in\fg_{\ol{0}}$, $f$ is a non-zero divisor in $U(\fg)$.
We denote by $U(\fg)[f^{-1}]$ the localization of $U(\fg)$ by the Ore set
$\{f^s\}_{s=0}^{\infty}$.
Note that $U(\fg)\subset U(\fg)[f^{-1}]$ and that
$\ad f$  acts locally nilpotently on $U(\fg)[f^{-1}]$.

If $f=[F,F]$ for some $F\in\fg_{\ol{1}}$, then
$\{F^s\}_{s=0}^{\infty}$ is an Ore set (since, by above,
$\{F^{2s}\}_{s=0}^{\infty}$ is an Ore set) and the
localization of $U(\fg)$ by the Ore set
$\{F^s\}_{s=0}^{\infty}$ is equal to $U(\fg)[f^{-1}]$.

For a $\fg$-module $N$ we denote by $N[f^{-1}]$ the localized $U(\fg)[f^{-1}]$-module
$$N[f^{-1}]:=U(\fg)[f^{-1}]\otimes_{U(\fg)} N;$$
the functor $N\mapsto U(\fg)[f^{-1}]$ is a covariant exact additive functor.
The canonical map $N\to N[f^{-1}]$ ($v\mapsto 1\otimes v$) has the kernel
$$N^f:=\{v\in N| f^sv=0\ \text{ for }s>>0\}.$$

The algebra $U(\fg)[f^{-1}]$ admits inner automorphisms
$$\phi_s(u):=f^suf^{-s}=\sum_{i=0}^{\infty} \binom{s}{i} (ad f)^i(u) f^{-i}$$
for $s\in\mathbb{Z}$; this gives rise to a one-parameter group of automorphisms
\begin{equation}\label{phia}
\phi_a(u):=\sum_{i=0}^{\infty} \binom{a}{i} (ad f)^i(u) f^{-i}\end{equation}
(the sum is finite since $\ad f$  acts locally nilpotently on $U(\fg)[f^{-1}]$).
We have $\phi_a\circ \phi_b=\phi_{a+b}$.

Take $a\in\mathbb{C}$.
Let $U'f^a$ be the following $U(\fg)[f^{-1}]$-bimodule: as a left module
this is a free left module generated by a vector denoted by
 $f^a$ and the right action is given by
$$(uf^a)u':=u\phi_a(u')f^a$$
for $u,u'\in U(\fg)[f^{-1}]$. One readily sees that $U'f^a$ is a free right module generated by $f^a$. For $s\in\mathbb{Z}$ the map $uf^a\mapsto uf^s(f^{a-s})$ provides a bimodule isomorphism
$U'f^a\iso U'f^{a-s}$; we identify these bimodules and use the notation
$f^{a+s}=f^s(f^a)=f^a(f^s)\in U'f^a$.

For a $\fg$-module $N$ the functor
$$N\mapsto U'f^a\otimes_{U(\fg)[f^{-1}]}U(\fg)[f^{-1}]\otimes_{U(\fg)}N=U'f^a\otimes_{U(\fg)[f^{-1}]}N[f^{-1}].
$$
is a covariant exact additive functor from $U(\fg)-Mod$ to $U(\fg)[f^{-1}]-Mod$.
The twisted localization functor $\dD_a: \fg-Mod\to \fg-Mod$ is the composition of
this functor and the restriction functor  $U(\fg)[f^{-1}]-Mod\to U(\fg)-Mod$.
The map $v\mapsto f^a\otimes v$ gives a linear isomorphism $N[f^{-1}]\to \dD_a(N)$;
for $u \in U(\fg)[f^{-1}]$ one has
$uv\mapsto f^a\otimes uv=\phi_a(u)v$.

Another way to define $\dD_a$ is as a composition of localization and the twist by automorphism
$\phi_a$: $\dD_a(N)= (U(\fg)[f^{-1}]\otimes_{U(\fg)}N)^{\phi_a}$.

\subsubsection{}\label{propertydD}
The twisted localization $\dD_a$ is a covariant exact additive endofunctor in the category of $\fg$-modules.
The map $f^a\otimes f^b\mapsto f^{a+b}$ induces a bimodule isomorphism
$$U'f^{a}\otimes_{U(\fg)}U'f^{b}\to U'f^{a+b}$$
which gives
$$\dD_a\circ \dD_b\iso \dD_{a+b}.$$
In particular, $\dD_{a+s}(N)\cong \dD_a(N)$ for every $s\in\mathbb{Z}$.

Any vector in $\dD_a(N)$ is of the form $f^{a+j}\otimes v$ for some
$v\in N$ and $j\in\mathbb{Z}_{<0}$.
Let  $\fg'\subset\fg$ be a subalgebra which contains $\fsl_2$.
By PBW Theorem $U'f^{a}$ is a free $U(\fg')$-module. Hence the functor
$\dD_a$ commutes with the
restriction functor $\Res^{\fg}_{\fg'}$.

\subsubsection{}Let us compute the value of twisted localization functor on Verma modules over rank one  Lie superalgebras $\fsl_2$ and
$\mathfrak{osp}(1|2)$.  For $\mathfrak{osp}(1|2)$ one has
$\mathfrak{osp}(1|2)_{\ol{0}}=\fsl_2$ and  $\mathfrak{osp}(1|2)_{\ol{1}}$
is spanned by $E,F$ with the relations:
$$E^2=e,\ F^2=-f,\ [E,F]=h,\ [f,E]=F,\ [e,F]=e.$$

We consider the standard triangular decomposition: $\fh=\mathbb{C}h$ and
$\fn^+=\mathbb{C}e$ for $\fsl_2$, $\fn^+=\mathbb{C}e+\mathbb{C}E$
for $\mathfrak{osp}(1|2)$; we set
 $\fb:=\fh+\fn^+$, $\fb^-:=\fh+\fn^-$; for $b\in\mathbb{C}$
 we denote by $M_{\fb}(b), M_{\fb^-}(b)$ the corresponding Verma modules
(we always assume that the highest weight vectors are even).
In this section we denote by $\Pi$ the parity change functor.

We will use the following result.

\begin{lem}{lemtwVerma}
(i) Take $\fg=\fsl_2$. The  $h$-eigenspaces of $\dD_a(M_{\fb}(b))$
are one-dimensional
and the eigenvalues are $\{b-2a+2\mathbb{Z}\}$. The module
$\dD_a(M_{\fb}(b))$ is indecomposable.

The module $\dD_a(M_{\fb}(b))$ is simple if
$a, a-b\not\in\mathbb{Z}$.

If $a\in\mathbb{Z}$, then
$\dD_a(M_{\fb}(b))\cong M_{\fb}(b)[f^{-1}]$ has  the short exact sequence
$$0\to M_{\fb}(b)\to \dD_a(M_{\fb}(b))\to M_{\fb^-}^{\#}(b+2)\to 0.$$

If $a-b\in\mathbb{Z}$ and $a\not\in\mathbb{Z}$, then
$\dD_a(M_{\fb}(b))$ has a singular vector $f^{b+1}\otimes v$, where
$v\in M_{\fb}(b)$ is a highest weight vector and this gives a short exact sequence
$$0\to M_{\fb}(-b-2)\to \dD_a(M_{\fb}(b))\to M_{\fb^-}^{\#}(-b)\to 0.$$

(ii)  Take $\fg=\mathfrak{osp}(1|2)$. The  $h$-eigenspaces of $\dD_a(M_{\fb}(b))$
are one-dimensional
and the eigenvalues are $\{b-2a+\mathbb{Z}\}$. The module
$\dD_a(M_{\fb}(b))$ is indecomposable.

The module $\dD_a(M_{\fb}(b))$ is simple if
$a, a-b\not\in\mathbb{Z}$.

If $a\in\mathbb{Z}$, then
$\dD_a(M_{\fb}(b))\cong M_{\fb}(b)[f^{-1}]$ has the short exact sequence
$$0\to M_{\fb}(b)\to \dD_a(M_{\fb}(b))\to \Pi(M_{\fb^-}^{\#}(b+1))\to 0.$$

If $a-b\in\mathbb{Z}$ and $a\not\in\mathbb{Z}$, then
$\dD_a(M_{\fb}(b))$  has a singular vector $f^{b}F\otimes v$, where $v\in M_{\fb}(b)$ is a highest weight vector and this gives a short exact sequence
$$0\to \Pi(M_{\fb}(-b-1))\to \dD_a(M_{\fb}(b))\to M_{\fb^-}^{\#}(-b)\to 0.$$
\end{lem}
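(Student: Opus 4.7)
My approach is to exploit the explicit PBW weight basis of $M_{\fb}(b)$ and read off every claim by direct computation in $U(\fg)[f^{-1}]$. For $\fg=\fsl_2$ the Verma module has basis $\{f^n v\}_{n\ge 0}$; for $\fg=\fosp(1|2)$ it has basis $\{f^n v,\ f^n Fv\}_{n\ge 0}$, using $[f,F]=0$ and $F^2\in\mathbb{C}^{\ast}f$. Localizing by $f$ extends the index $n$ to all of $\mathbb{Z}$, and under the identification of $\dD_a(M_{\fb}(b))$ with $f^a\otimes M_{\fb}(b)[f^{-1}]$, the formula $[h,f^a]=-2af^a$ shows that $f^{a+n}\otimes v$ (resp.\ $f^{a+n}F\otimes v$) is a weight vector of weight $b-2(a+n)$ (resp.\ $b-2(a+n)-1$). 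This yields at once the claimed weight support and the one-dimensionality of weight spaces.

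The computational core is the identity
\[
[e,f^a]=a\,f^{a-1}(h-a+1),
\]
valid for arbitrary $a\in\mathbb{C}$ in $U(\fg)[f^{-1}]$, together with its odd partner $[E,f^a]=-a\,f^{a-1}F$ in the $\fosp(1|2)$-case; combined with $EF+FE=h$ and $Ev=0$, these yield
\[
e\cdot(f^{a+n}\otimes v)=(a+n)(b-a-n+1)\,f^{a+n-1}\otimes v
\]
and the analogous formulas for $E$. The structure constants are polynomials in $a+n$ and $b-a-n$ whose integer roots require $a\in\mathbb{Z}$ or $a-b\in\mathbb{Z}$, so in the generic case $a,a-b\notin\mathbb{Z}$ they never vanish; consequently both $e$ (or $E$) and the invertible $f$ connect adjacent weight spaces bijectively, and any nonzero weight vector generates the whole module, giving simplicity. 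Indecomposability follows from a weight-support argument: a direct-sum decomposition $\dD_a(M_{\fb}(b))=A\oplus B$ partitions the one-dimensional weight spaces and must be $f$-stable, and since $f$ is invertible in the localization and maps each weight space bijectively to its shift by $-2$, $\supp(A)$ and $\supp(B)$ are invariant under shift by $-2$. For $\fsl_2$ the total support is a single $2\mathbb{Z}$-coset, forcing one summand to vanish; for $\fosp(1|2)$ the analogous bijectivity of $F$ (whose square is an invertible scalar multiple of $f$) links the two parity cosets.

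The short exact sequences come from the same formulas. When $a\in\mathbb{Z}$ we may take $a=0$, so $\dD_a(M_{\fb}(b))\cong M_{\fb}(b)[f^{-1}]$ contains $M_{\fb}(b)$ as a submodule, and the quotient's lowest weight vector is the image of $f^{-1}\otimes v$ of weight $b+2$ (resp.\ $f^{-1}F\otimes v$, odd, of weight $b+1$). When $a-b\in\mathbb{Z}$ and $a\notin\mathbb{Z}$, the vanishing of the structure constant at $n=b-a+1$ (resp.\ $n=b-a$) pinpoints the singular vector $f^{b+1}\otimes v$ (resp.\ $f^b F\otimes v$), which generates a Verma submodule of highest weight $-b-2$ (resp.\ $-b-1$, with parity shift $\Pi$). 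In each case the quotient is bounded below, and one identifies it with the contragredient Verma $M_{\fb^-}^{\#}$ by comparing characters and pinning down the unique extremal weight vector.

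\textbf{Main obstacle.} The identification of the quotient with the dual Verma $M_{\fb^-}^{\#}$ is the most delicate step, since $M_{\fb^-}^{\#}$ is in general not cyclic; the plan is to fix the isomorphism via the explicit formulas for the $e$- (or $E$-) action on the weight basis of the quotient together with the uniqueness of the lowest weight vector, matching these against the standard realization of the dual Verma. The $\fosp(1|2)$-bookkeeping for the signs in $[E,f^a]$, the anticommutator $EF+FE=h$, and the resulting parity shifts $\Pi$ is also where sign errors are most likely to creep in.
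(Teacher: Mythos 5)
Your proposal is correct and follows essentially the same route as the paper: the paper's entire proof consists of the commutator formulas $ef^c=f^ce+cf^{c-1}(h-(c-1))$, $f^cE=Ef^c+cFf^{c-1}$, $f^cFE=(h+c)f^c-EFf^c$ in the bimodule $U'f^c$, from which it reads off that $e(f^cv)=0$ iff $c=0$ or $c=b+1$ (resp. $E(Ff^cv)=0$ iff $c=b$), exactly the computation at the core of your plan. You simply make explicit the routine consequences (one-dimensional weight spaces, indecomposability via invertibility of $f$ resp. $F$, simplicity in the generic case, and the identification of the quotients with $M_{\fb^-}^{\#}$ via the explicit action on the weight basis) that the paper leaves implicit.
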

\begin{proof}
From~(\ref{phia}) we obtain the following formulae
in the bimodule $U'f^c$:
\begin{equation}\label{sl2formula}
hf^c-f^ch=-2c f^c,\  ef^c=f^ce+cf^{c-1}(h-(c-1))
\end{equation}
and
$$f^cF=Ff^c,\ f^cE=Ef^c+cFf^{c-1},\  f^cFE=(h+c)f^c-EFf^c$$
for the case $\mathfrak{osp}(1|2)$.
Take $v$ such that  $ev=0, hv=bv$. We obtain
$e(f^cv)=0$ if and only if $c=0$ or $c=b+1$; this proves (i).
For the case $\mathfrak{osp}(1|2)$
take $v$ such that  $Ev=0, hv=bv$.
For $c\not=0$ this gives $Ef^cv\not=0$ and
$E(Ff^c v)=0$ if and only if $c=b$; this proves (ii).
\end{proof}

\subsection{Enright functor}\label{En1}

Consider
the Zuckerman functor
$$\Gamma_e(N):=\{v\in N|\ \dim (\mathbb{C}[e]v)<\infty\}.$$
We introduce the Enright functor by $\cC_a:=\Gamma_e\circ \dD_a$, that is
$$\cC_a(N):=\{v\in \dD_a(N)|\ \dim (\mathbb{C}[e]v)<\infty\}.$$

For $b\in\mathbb{C}$ we denote by $\mathcal M(b)$ the full
subcategory of $\fg$-modules
consisting of objects $M$ with  diagonal action of $h$ with
the eigenvalues lying in $b+\mathbb{Z}$.
Since $\fsl_2$ acts locally finitely on $\fg$,  the operator $ad\ h$ on $\fg$
is diagonal with integral eigenvalues and thus any
indecomposable module with a diagonal action of $h$ lies in
$\mathcal M(b)$ for some $b$.
We denote by $\mathcal M(b)^e$ the full subcategory of
  $\mathcal M(b)$ consisting of objects with locally nilpotent action of
  $e$.

\subsubsection{}
\begin{lem}{lemsl2a}
Take $\fg=\fsl_2$ or $\fg=\mathfrak{osp}(1|2)$ and
$b\not\in\mathbb{Z}$. A module $N\in \mathcal{M}(b)^e$ is a direct sum
of simple Verma modules.
\end{lem}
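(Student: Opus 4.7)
The plan is to show that the space of singular vectors $V := \{v \in N : \fn^+ v = 0\}$ freely generates $N$ over $U(\fn^-)$, so that $N$ decomposes as a direct sum of Verma modules $M_{\fb}(\wt v_i)$ indexed by a homogeneous weight basis $\{v_i\}$ of $V$. Note that $V = \ker e|_N$ for $\fsl_2$, while for $\mathfrak{osp}(1|2)$ the relation $E^2 = e$ forces $\ker E \subseteq \ker e$, so $V = \ker E|_N$. Since all weights occurring in $N$ lie in $b + \mathbb{Z}$ with $b \notin \mathbb{Z}$, they are all non-integral, and the standard reducibility criterion makes each Verma $M_{\fb}(\lambda)$ with $\lambda \notin \mathbb{Z}$ simple; thus such a decomposition yields exactly the assertion of the lemma.

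For surjectivity of the natural evaluation map $\phi \colon \bigoplus_i M_{\fb}(\wt v_i) \to N$, I would induct on the $e$-nilpotency index (or $E$-nilpotency for $\mathfrak{osp}(1|2)$) of a weight vector. Given $v \in N_\mu$ with $n$ minimal such that $e^{n+1} v = 0$, the vector $w := e^n v$ lies in $V$ and has weight $\lambda = \mu + 2n$; iterating the commutator identity $[e, f^k] = k f^{k-1}(h - k + 1)$ against $w$ produces
$$e^n f^n w \;=\; n!\,\lambda(\lambda - 1)\cdots(\lambda - n + 1)\cdot w,$$
and every factor $\lambda - j$ is nonzero since $\lambda \notin \mathbb{Z}$. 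Hence $v - c(\lambda, n)^{-1} f^n w$ has strictly smaller $e$-nilpotency index, and the induction closes. For $\mathfrak{osp}(1|2)$, the analogue $E^n F^n w = c'(\lambda, n)\, w$ holds (since $w$ generates a highest weight submodule that must be isomorphic to $M_{\fb}(\lambda)$ by simplicity), and $c'(\lambda, n) \neq 0$ follows most cleanly from simplicity of $M_{\fb}(\lambda)$: if $c'(\lambda, n)$ vanished, then $F^n v_\lambda$ would generate a proper nonzero submodule of $M_{\fb}(\lambda)$, a contradiction.

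For injectivity, take a hypothetical homogeneous relation $\sum_{k=0}^{N} f^k v_k = 0$ with $v_k \in V$ and all terms of the same weight; applying $e^N$ kills every $k < N$ contribution, because for singular $v_k$ we have $e^{k+1} f^k v_k = c(\lambda_k, k)\, e v_k = 0$, while the top term yields $c(\lambda_N, N) v_N = 0$ and forces $v_N = 0$. Descending induction on $N$ then gives $v_k = 0$ for every $k$. The principal technical obstacle is the nonvanishing of $c(\lambda, n)$ and its $\mathfrak{osp}(1|2)$-analogue: for $\fsl_2$ this is an elementary PBW commutator computation, while for $\mathfrak{osp}(1|2)$ the intertwined relations $E^2 = e$, $F^2 = -f$, $[E,F] = h$, $[f,E] = F$ make an explicit calculation cumbersome, so the clean route is to bypass it and invoke simplicity of the generic Verma in the key nonvanishing step.
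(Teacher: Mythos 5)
Your proof is correct, but it takes a genuinely different route from the paper's. The paper argues softly: for $v\in N$ the cyclic submodule $U(\fg)v$ has locally nilpotent $e$-action and non-integral weights, hence is (a sum of) simple Verma modules; therefore $N$ is a sum of simple submodules and is completely reducible by the general criterion (Lang, Ch.~XVII, \S 2). Your argument is instead a hands-on ``Verma flag splits'' computation: you exhibit the decomposition explicitly by showing that the singular vectors freely generate $N$ over $U(\fn^-)$, with surjectivity by downward induction on the $e$- (resp.\ $E$-) nilpotency index using the non-vanishing of $e^nf^nw=n!\,\lambda(\lambda-1)\cdots(\lambda-n+1)\,w$ for $\lambda\notin\mathbb{Z}$, and injectivity by applying powers of $e$ (resp.\ $E$) to a putative relation. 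Both are valid; I checked your constants and the $\mathfrak{osp}(1|2)$ reduction ($E^2=e$ gives $\ker E\subseteq\ker e$ and local nilpotence of $E$, and $M_{\fb}(\lambda)$ is reducible only for $\lambda\in\mathbb{Z}_{\geq 0}$, so simplicity holds off $\mathbb{Z}$). What each buys: the paper's proof is two lines but leans on the unproved assertion that an arbitrary cyclic module in $\cM(b)^e$ is a simple Verma module --- as literally stated this fails (e.g.\ a sum of two highest weight vectors of distinct weights generates the whole direct sum), though it is easily repaired by passing to singular weight vectors or to a finite Verma filtration; your version avoids this issue entirely, is self-contained, and gives the extra information that the multiplicity of $M_{\fb}(\lambda)$ in $N$ equals $\dim V_\lambda$ for $V$ the space of singular vectors. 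The only cosmetic point is that for $\mathfrak{osp}(1|2)$ odd singular vectors produce parity-shifted Vermas, which the paper identifies with $M_{\fb}(\lambda)$ by convention.
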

\begin{proof}
Take $v\in N$.
The submodule
generated by $v$ is a cyclic module with a locally nilpotent action of $e$;
since $b\not\in\mathbb{Z}$, this module is a simple Verma module.
Therefore $N$ is a sum of simple modules, so
$N$ is completely reducible by~\cite{Lang}, Ch. XVII, Sect. 2.
\end{proof}

\subsubsection{}
\begin{prop}{proptwEnright}

(i) Let $\fg'$ be a subalgebra of $\fg$ containing
  $\fsl_2$ and $\cC'_{a}$ be an analogue of $\cC_{a}$
  for $\fg'$. Then $\Res^{\fg}_{\fg'}\cC_{a}\cong\cC'_{a}\Res^{\fg}_{\fg'}$.

(ii) The restriction of the Enright functor $\cC_a$ to  $\mathcal{M}(a)$ defines an additive left exact functor
$\mathcal{M}(a)\to \mathcal M(-a)^e$.

(iii) Take $V\in \cM(b)^e$.
 If the module $\dD_a(V)$ has a non-zero
subquotient with locally nilpotent action of $e$, then $a-b\in\mathbb{Z}$
or $a\in\mathbb{Z}$.

(iv) For $N\in \mathcal{M}(0)^e$ the map $v\mapsto 1\otimes v$
gives a homomorphism $N\to \cC_0(N)$ with the kernel $N^f$.

(v) Take $a\not\in\mathbb{Z}$ and $N\in  \mathcal{M}(a)^e$. For every
$v\in N$ there exists $s>0$ such that $f^{a+j}\otimes v\in \cC_a(N)$
for every $j\geq s$.
\end{prop}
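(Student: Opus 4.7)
\emph{Plan.} The plan is to handle the five parts in order, with (iii) carrying the main technical load.

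Part (i) is formal. The functor $\dD_a$ commutes with $\Res^{\fg}_{\fg'}$ because $U'f^a$ is a free right $U(\fg')$-module by PBW (as already observed in \S\ref{propertydD}), and the Zuckerman functor $\Gamma_e$ depends only on the action of $e \in \fsl_2 \subset \fg'$, so the composition $\cC_a=\Gamma_e\circ \dD_a$ commutes with restriction.

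For (ii), the bimodule relation $hf^{a+j} = f^{a+j}(h - 2(a+j))$ from \eqref{sl2formula} sends an $h$-weight $\mu \in a + \mathbb{Z}$ in $N$ to a weight $\mu - 2(a+j) \in -a + \mathbb{Z}$ in $\dD_a(N)$; hence $\dD_a$ maps $\cM(a)$ into $\cM(-a)$ and $\cC_a$ into $\cM(-a)^e$. Additivity is immediate, and exactness follows from the exactness of $\dD_a$ together with the left exactness of $\Gamma_e$.

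The core of the proof is (iii). Using (i), I reduce to $\fg = \fsl_2$. Any finitely generated $\fsl_2$-submodule $V_\alpha \subset V$ lies in $\CO^{fin}(\fsl_2)$ and admits a Jordan--H\"older filtration with simple factors $L(c_i)$, $c_i \in b + \mathbb{Z}$. Under the hypothesis $a, a-b \notin \mathbb{Z}$ we have $a - c_i \notin \mathbb{Z}$, and \Lem{lemtwVerma}(i) guarantees that each $\dD_a(L(c_i))$ is either a simple $\fsl_2$-module (when $L(c_i) = M_{\fb}(c_i)$ is a Verma) or zero (when $L(c_i)$ is finite dimensional, since $f$ is then nilpotent and localization vanishes). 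A direct application of \eqref{sl2formula} gives
\[
e \cdot (f^{a+j} \otimes v_0) = (a+j)(c_i - a - j + 1)\, f^{a+j-1} \otimes v_0,
\]
and under our hypothesis neither factor is ever zero for $j \in \mathbb{Z}$, so $e$ acts injectively, hence is not locally nilpotent, on any $\dD_a(M_{\fb}(c_i))$. If $\dD_a(V)$ carried a nonzero $e$-locally nilpotent subquotient $W = W''/W'$, it would contain a nonzero $h$-weight vector killed by $e$; lifting to $\tilde w \in W''$ with $e \tilde w \in W'$ and choosing a finitely generated $V_\alpha \subset V$ with $\tilde w \in \dD_a(V_\alpha)$, the $\fsl_2$-submodule generated by the image of $\tilde w$ in $(W'' \cap \dD_a(V_\alpha))/(W' \cap \dD_a(V_\alpha))$ is a nonzero highest-weight $\fsl_2$-module, and in particular $e$-locally nilpotent. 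But $\dD_a(V_\alpha)$ has finite length with nonzero Jordan--H\"older factors among the $\dD_a(M_{\fb}(c_i))$, none of which is $e$-locally nilpotent, so any subquotient of $\dD_a(V_\alpha)$ has all its Jordan--H\"older factors among these, and hence contains no nonzero $e$-locally nilpotent submodule. This contradiction proves (iii). The main obstacle I anticipate is organizing cleanly the descent from an arbitrary subquotient of $\dD_a(V)$ to one of $\dD_a(V_\alpha)$.

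Parts (iv) and (v) are more elementary. For (iv), the map $v \mapsto 1 \otimes v$ coincides with the canonical $N \to N[f^{-1}] = \dD_0(N)$ (since $\phi_0 = \id$), whose kernel is $N^f$ by definition (\S\ref{dD}); the condition $N \in \cM(0)^e$ ensures $e^k(1 \otimes v) = 1 \otimes e^k v = 0$ for large $k$, so the image lies in $\cC_0(N)$. For (v), reducing to $\fsl_2$ by (i), the submodule $U(\fsl_2) v \subset N$ is finitely generated in $\cM(a)^e$; since $a \notin \mathbb{Z}$, \Lem{lemsl2a} decomposes it as a finite direct sum of simple Verma modules $M_{\fb}(c_i)$ with $c_i \in a + \mathbb{Z}$. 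Writing $v$ as a polynomial in $f$ applied to highest weight vectors in each summand, iterating the computation from (iii) yields $e^k(f^{a+j+n}\otimes v_{0,i}) = (a+j+n)_k(c_i-a-j-n+1)_k\, f^{a+j+n-k}\otimes v_{0,i}$, and since $c_i - a \in \mathbb{Z}$ the second Pochhammer factor vanishes for $k$ large once $j$ exceeds the finitely many thresholds $c_i - a + 1$; choosing $s$ to exceed each such threshold gives the required bound.
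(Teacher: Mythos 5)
Your proof is correct and follows essentially the same route as the paper: (i)--(ii) are the same formal observations, (iii) reduces to $\fsl_2$ via (i) and then to simple modules via exactness of $\dD_a$ and \Lem{lemtwVerma}, and (iv)--(v) use the canonical map $N\to N[f^{-1}]$ and the decomposition of \Lem{lemsl2a}. The only difference is that you spell out the passage from an arbitrary subquotient of $\dD_a(V)$ to a finite-length $\dD_a(V_\alpha)$ in (iii), a step the paper leaves implicit in the phrase ``it suffices to prove the statement for simple $V$''.
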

\begin{proof}
Both the Zuckerman functor and the twisted localization functor
commute with $\Res_{\fg'}$ (see~\S~\ref{propertydD}), so (i) follows.
  Since  the Zuckerman functor
 is additive and left exact,
$\cC_a$ is an additive left exact functor. From~(\ref{sl2formula})
it follows that for $M\in \mathcal M(a)$ one has $\cC_a(M)\in \mathcal M(-a)$, which gives (ii). {By (i)
it suffices to prove (iii) for $\fg=\mathfrak{sl}_2$.
  Since $\dD_a$ is exact, it suffices to prove the statement for simple $V$. Any simple
  $\fsl_2$-module
  $V\in  \mathcal{M}(a)^e$ is either finite-dimensional and then  $\dD_a(V)=0$
  or a Verma module. In the latter case the statement follows from~\Lem{lemtwVerma} (i).
  This gives (iii).}

By~\S~\ref{dD}, $N^f$ is the kernel of the map $N\to N[f^{-1}]$
and this gives (iv).

For (v) take $v'\in N$. Using~\Lem{lemsl2a}
we may (and will) assume that  $v'=f^jv$, where $ev=0$ and $hv=bv$
for $b\in a+\mathbb{Z}$. By~\Lem{lemtwVerma} (i), $f^{b+1}\otimes v \in \cC_a(N)$,
so $f^{a+j}\otimes v' \in \cC_a(N)$ if $a+j\geq b+1$.
\end{proof}

\subsubsection{Remark}\label{purim}
By (iii), the restriction of $\cC_a$ to $\cM(b)^e$ is non-zero only for
 $a-b\not\in\mathbb{Z}$ or $a\in\mathbb{Z}$. In the former case, by (ii), we have
 the functor $\cM(a)^e\to \cM(-a)^e$; from now on we denote this functor by
 $\cC_a$. In the latter case $\cC_a$ restricts to
 the endofunctor $\cM(b)^e\to \cM(b)^e$; furthermore, for $b\not\in\mathbb{Z}$
 this functor is isomorphic to the identity functor by~\Lem{lemtwVerma} (i).

  \subsubsection{Example: $\fsl_2$}
Take $\fg=\fsl_2$. If $a\not\in\mathbb{Z}$, all indecomposable modules in the category $\cM(a)$  are Verma modules
and $\cC_a(M(a))=M(-2-a)$ by~\Lem{lemtwVerma}.

{By~\cite{BGG} }
the category $\cM(0)$ contains
five types of indecomposable modules: for $i\in\mathbb{Z}_{\geq 0}$ we have a
finite-dimensional module
$L(i)$, a Verma modules $M(i)$,  its dual module $M^{\#}(i)$, a simple Verma module $M(-1-i)$,
and a self-dual module of length three $N(i)$ with the exact sequence
$$0\to M(i)\to N(i)\to M(-2-i)\to 0.$$
Using~\Lem{lemtwVerma} we obtain
$$\cC_0(L(i))=0, \ \cC_0(M(i))=\cC_0(M^{\#}(i))= \cC_0(M(-2-i))=M(i),\ \cC_0(N(i))=N(i).$$

\subsubsection{}
\begin{thm}{propid}
(i) Consider the restriction $\cC_a:\mathcal M(a)^e\to\mathcal M(-a)^e$.
  There exists a morphism of functors
  $\operatorname{Id}_{\mathcal M(a)^e}\to \cC_{-a}\cC_{a}$.

  (ii) For $a\not\in\mathbb{Z}$ the functor $\cC_a:\mathcal M(a)^e\to\mathcal M(-a)^e$
  is an equivalence of categories.

  (iii) Let $N_1,N_2\in \mathcal M(0)^e$ be such that $N_1^f=0$.
Then the natural map $\Hom_{\fg}(N_1,N_2)\to \Hom_{\fg}(\cC_0(N_1),\cC_0(N_2))$ is injective and hence
$$\dim \Hom_{\fg}(N_1,N_2)\leq \dim \Hom_{\fg}(\cC_0(N_1),\cC_0(N_2)).$$
  \end{thm}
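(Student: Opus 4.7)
\emph{Plan.} The three parts share a common framework: the bimodule isomorphism $\dD_{-a}\dD_a\iso\dD_0$ of \S\ref{propertydD}, exactness of $\dD_a$, and left exactness of $\Gamma_e$.

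For (i), my plan is to build $\eta_N:N\to\cC_{-a}\cC_a(N)$ by lifting the canonical localization $v\mapsto 1\otimes v\in\dD_0(N)$. Given $v\in N\in\cM(a)^e$, \Prop{proptwEnright}(v) provides $s\gg 0$ with $f^{a+s}\otimes v\in\cC_a(N)$, and I set $\eta_N(v):=f^{-a-s}\otimes(f^{a+s}\otimes v)\in\dD_{-a}(\cC_a(N))$. The identity $f\cdot(f^{a+s}\otimes v)=f^{a+s+1}\otimes v$ in $\dD_a(N)$ makes this $s$-independent, and under the canonical isomorphism $\dD_{-a}(\dD_a(N))\iso\dD_0(N)$ it corresponds to $1\otimes v$. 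Since $e$ acts locally nilpotently on $N$, the element $1\otimes v$ is itself $e$-locally-nilpotent, so $\eta_N(v)\in\Gamma_e(\dD_{-a}(\cC_a(N)))=\cC_{-a}\cC_a(N)$. Naturality in $N$ is immediate.

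For (ii), the plan is to prove that $\eta_N$ of (i) is bijective when $a\not\in\mathbb{Z}$. By \Lem{lemsl2a}, every $N\in\cM(a)^e$ decomposes as an $\fsl_2$-module into simple Verma modules $M_\fb(b)$ with $b\in a+\mathbb{Z}\setminus\mathbb{Z}$; hence $f$ acts injectively, $N^f=0$, and $N\hookrightarrow N[f^{-1}]$. The key identification is $\cC_0(N)=N$: on each Verma summand the formula $e(f^{-j}\otimes v_b)=-j(b+j+1)f^{-j-1}\otimes v_b$ for $j\geq 1$ has coefficient $b+j+1\neq 0$ since $b\not\in\mathbb{Z}$, so no new $e$-locally-nilpotent vectors arise in the localization. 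Applying $\Gamma_e$ to the inclusion $\dD_{-a}(\cC_a(N))\hookrightarrow\dD_{-a}(\dD_a(N))\cong\dD_0(N)$ then yields an inclusion $\iota:\cC_{-a}\cC_a(N)\hookrightarrow\cC_0(N)=N$ satisfying $\iota\circ\eta_N=\id_N$; since $\iota$ is already injective, it is then bijective, and so is $\eta_N$. The symmetric reasoning on $\cM(-a)^e$ produces $\id\iso\cC_a\cC_{-a}$, making $\cC_a$ and $\cC_{-a}$ quasi-inverse equivalences.

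For (iii), the plan is to invoke the natural transformation $\eta:\operatorname{Id}\to\cC_0$ on $\cM(0)^e$ supplied by \Prop{proptwEnright}(iv), whose kernel at $N$ equals $N^f$. The hypothesis $N_1^f=0$ makes $\eta_{N_1}$ injective. Naturality delivers the square $\cC_0(\phi)\circ\eta_{N_1}=\eta_{N_2}\circ\phi$ for any $\phi:N_1\to N_2$; if $\cC_0(\phi)=0$ the right-hand side vanishes, placing $\phi(N_1)\subseteq\ker\eta_{N_2}=N_2^f$. Exploiting $N_1^f=0$ and tracking the containment through the $f$-action on $N_1$ (every $v\in N_1$ then has $f^k v\in\ker\phi$ for some $k$) will force $\phi=0$, yielding the injectivity of the natural map and the claimed dimension inequality.

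The main obstacle will be the explicit computation in (ii) identifying $\cC_0(N)=N$ for $a\not\in\mathbb{Z}$: the coefficient $j(b+j+1)$ governing $e$-nilpotency has zeros at $j=0$ and $j=-b-1$, and one must exploit $b\not\in\mathbb{Z}$ precisely to rule out the second zero, which would otherwise generate spurious singular vectors in the twisted localization and make $\cC_0(N)$ strictly larger than $N$.
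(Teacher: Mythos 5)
Your treatments of (i) and (ii) are correct and follow essentially the paper's own route: the map $\operatorname{Id}\to\cC_{-a}\cC_a$ is obtained from the identification $\dD_{-a}\dD_a\cong\dD_0$ together with the canonical map $v\mapsto 1\otimes v$, and bijectivity for $a\notin\mathbb{Z}$ is checked after restricting to the fixed $\fsl_2$ and decomposing into simple Verma modules via \Lem{lemsl2a}. Your version is more explicit (the formula $\eta_N(v)=f^{-a-s}\otimes(f^{a+s}\otimes v)$ and the section $\iota$ with $\iota\circ\eta_N=\id_N$), which is a clean way to see surjectivity. One small caveat in (i): the statement is for arbitrary $a$, and \Prop{proptwEnright}(v), which you use to place $f^{a+s}\otimes v$ in $\cC_a(N)$, is only available for $a\notin\mathbb{Z}$; for $a\in\mathbb{Z}$ you should instead quote \Prop{proptwEnright}(iv), which puts $1\otimes v$ in $\cC_0(N)$ directly, and then the same formula works with $a+s=0$.

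Part (iii) has a genuine gap at the last step. From $\cC_0(\phi)=0$ and naturality you correctly get $\phi(N_1)\subseteq\ker\eta_{N_2}=N_2^f$, but the hypothesis $N_1^f=0$ cannot upgrade this to $\phi=0$: knowing that $f^k v\in\ker\phi$ for every $v$ and suitable $k$ is exactly consistent with $\ker\phi$ being a proper submodule containing $f\cdot N_1$. Concretely, take $\fg=\fsl_2$, $N_1=M_{\fb}(0)$, $N_2=L(0)=\mathbb{C}$, and $\phi$ the projection. Then $N_1^f=0$, $\phi\neq 0$, yet $\phi(N_1)=N_2=N_2^f$ and indeed $\cC_0(\phi)=0$ because $\cC_0(L(0))=0$. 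So the step "will force $\phi=0$" fails, and in fact the assertion of (iii) is false under the stated hypothesis $N_1^f=0$ alone. The hypothesis that actually makes the argument (and the statement) work is $N_2^f=0$: then $\eta_{N_2}$ is injective and $\eta_{N_2}\circ\phi=\cC_0(\phi)\circ\eta_{N_1}=0$ immediately gives $\phi=0$. This is also the hypothesis implicitly used in the paper's own proof, which asserts that $\cC_0(\psi)$ restricted to $N_1\subset\cC_0(N_1)$ "is equal to $\psi$" --- a claim that already presupposes $N_2\hookrightarrow\cC_0(N_2)$. You should either add $N_2^f=0$ to the hypotheses or restrict to a class of $N_2$ for which it holds automatically.
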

\begin{proof}  First we claim that for any $M\in\mathcal M(a)^e$ the natural morphism $\dD_0\cC_aM\to \dD_0\dD_aM$ is an isomorphism.
    Indeed, consider the exact sequence $0\to \cC_aM\to \dD_aM\to X\to 0.$ Then $X^f=X$ because otherwise $\dD_aM$ has an $\fsl_2$-subquotient
    with free action of $f$ and $e$ which is not the case by ~\Lem{lemtwVerma} (i). Therefore $\dD_0X=0$ and we have an isomorphism of functors
    $\dD_{-a}\cC_a\simeq \dD_{-a}\dD_a$. Hence there is a canonical morphism $M\to \dD_{-a}\cC_aM$. On the other hand $\Hom_{\fg}(M,X)=\Hom_{\fg}(M,\Gamma_e X)$.  This implies existence of a canonical morphism $\Phi_M: M\to \cC_{-a}\cC_aM$ and (i) is proven.

    For (ii) we have to check that $\Phi_M$ is an isomorphism. From~\Lem{lemtwVerma} (i) it follows that for the case $\fg=\fsl_2$.
    In general it is a consequence of~\Prop{proptwEnright} (i) and ~\Lem{lemsl2a}.

  For (iii) recall that the map $v\mapsto 1\otimes v$ induces an embedding
  $N_1\to \cC(N_1)$. For $\psi\in \Hom_{\fg}(N_1,N_2)$
  the restriction of $\cC(\psi)$ to $N_1\subset \cC(N_1)$ is equal to $\psi$.
  This implies the required inequality.
  \end{proof}

\subsection{Changing characters for $a\not\in\mathbb{Z}$}
In this subsection $\fh\subset \fg$ is an abelian Lie subalgebra containing $h$
such that $[h',e]=\alpha(h')e$, $[h',f]=-\alpha(h')f$ for some $\alpha\in\fh^*$ and
every $h'\in\fh$. We denote the fixed copy of  $\fsl_2$ (the span of $f,h,e$) by $\fsl_2(\alpha)$.
We define $N_{\nu}$ and $\ch_{\epsilon} N$ as in~\S~\ref{Nnu};
we set $r_{\alpha}\nu:=\nu-\nu(h)\alpha$ for $\nu\in\fh^*$ and
$r_{\alpha}\bigl(\sum_{\nu\in\fh^*} a_{\nu}e^{\nu}\bigr):=
\sum_{\nu\in\fh^*} a_{\nu}e^{r_{\alpha}(\nu)}$.

\subsubsection{}
\begin{thm}{thmchar}
Let $\fsl_2=\fsl_2(\alpha)$.
Take $a\not\in\mathbb{Z}$ and $N\in\cM(a)^e$.
If $N$ and $\cC_a(N)$ admit the $\fh$-characters, then

 (i)
$\ \  \ e^{\frac{\alpha}{2}}(1-e^{-\alpha})
\ch_{\epsilon} \cC_a(N)=r_{\alpha}\bigl(e^{\frac{\alpha}{2}}(1-e^{-{\alpha}})\ch_{\epsilon} N\bigr)$.

(ii) If $\fg$ contains $\mathfrak{osp}(1|2)$ with $\mathfrak{osp}(1|2)_{\ol{0}}=\fsl_2(\alpha)$, then
$$e^{\frac{\alpha}{4}}(1-\epsilon e^{-\frac{\alpha}{2}})
\ch_{\epsilon}\cC_a(N)=\epsilon r_{\alpha}\bigl(e^{\frac{\alpha}{4}}(1-\epsilon e^{-\frac{\alpha}{2}})\ch_{\epsilon} N\bigr).$$
\end{thm}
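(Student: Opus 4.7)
The plan is to reduce both parts to a rank-one computation using Proposition~\ref{proptwEnright}(i), decompose $N$ into simple Verma modules of the rank-one subalgebra via Lemma~\ref{lemsl2a}, and apply Lemma~\ref{lemtwVerma} summand-by-summand. Since $a \notin \mathbb{Z}$ and $N \in \cM(a)^e$, the $h$-eigenvalues on $N$ lie in $a + \mathbb{Z} \subset \mathbb{C} \setminus \mathbb{Z}$, so Lemma~\ref{lemsl2a} applies to $\Res^{\fg}_{\fsl_2(\alpha)} N$. To respect $\fh$-weights I observe that $K := \Ker(e \colon N \to N)$ is $\fh$-stable (because $\fh$ centralizes $e$) and therefore splits as $K = \bigoplus_\nu K_\nu$ into finite-dimensional weight subspaces. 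Choosing an $\fh$-homogeneous basis $B_\nu$ of each $K_\nu$ gives
\[
\Res^{\fg}_{\fsl_2(\alpha)} N \;=\; \bigoplus_\nu \bigoplus_{w \in B_\nu} U(\fsl_2(\alpha))\, w,
\]
with each summand a simple $\fsl_2$-Verma module whose singular vector $w$ has $\fh$-weight $\nu$ and $h$-eigenvalue $\nu(h) \in a + \mathbb{Z}$.

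Setting $\sigma_\nu := \sdim K_\nu$ and using that $f$ is even, so $f^k w$ has the same parity as $w$, I get $\ch_\epsilon N = \sum_\nu \sigma_\nu e^\nu/(1 - e^{-\alpha})$ and hence $e^{\alpha/2}(1 - e^{-\alpha})\ch_\epsilon N = \sum_\nu \sigma_\nu e^{\nu + \alpha/2}$. By Lemma~\ref{lemtwVerma}(i) (with $b = \nu(h)$, so $a - b \in \mathbb{Z}$ and $a \notin \mathbb{Z}$), $\cC_a$ sends each summand to the simple Verma module generated by the singular vector $f^{\nu(h)+1} \otimes w$. A direct computation in the twisted localization bimodule, via $h'(f^s \otimes w) = (\nu(h') - s\alpha(h'))(f^s \otimes w)$, shows that this singular vector has $\fh$-weight $\nu - (\nu(h)+1)\alpha = r_\alpha(\nu) - \alpha$ and the same parity as $w$. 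Additivity of $\cC_a$ then gives $\ch_\epsilon \cC_a(N) = \sum_\nu \sigma_\nu e^{r_\alpha(\nu) - \alpha}/(1 - e^{-\alpha})$; multiplying by $e^{\alpha/2}(1 - e^{-\alpha})$ and using $r_\alpha(\nu + \alpha/2) = r_\alpha(\nu) - \alpha/2$ yields (i).

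Part (ii) is the same argument with $\mathfrak{osp}(1|2)$ in place of $\fsl_2(\alpha)$, invoking Proposition~\ref{proptwEnright}(i) together with Lemmas~\ref{lemsl2a} and~\ref{lemtwVerma}(ii). The simple $\mathfrak{osp}(1|2)$-Verma module generated by $w$ of $\fh$-weight $\nu$ now has character $e^\nu/(1 - \epsilon e^{-\alpha/2})$, since $F$ is odd of $\fh$-weight $-\alpha/2$. Lemma~\ref{lemtwVerma}(ii) identifies the new singular vector as $f^{\nu(h)} F \otimes w$, of $\fh$-weight $r_\alpha(\nu) - \alpha/2$ and \emph{opposite} parity to $w$; this parity flip produces the global prefactor $\epsilon$ on the right-hand side of (ii). The same geometric-series bookkeeping as in (i) then closes the computation.

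The only genuinely non-routine point is arranging the rank-one decomposition of Lemma~\ref{lemsl2a} in an $\fh$-equivariant way, which is secured by the $\fh$-stability of $\Ker(e)$. Once that is in place, the theorem reduces to a mechanical application of Lemma~\ref{lemtwVerma} together with the identity $r_\alpha(\nu + c\alpha) = r_\alpha(\nu) - c\alpha$ for $c \in \mathbb{Q}$.
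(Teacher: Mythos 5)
Your proposal is correct and follows essentially the same route as the paper: reduce to the rank-one subalgebra via Proposition~\ref{proptwEnright}(i), decompose the restriction into simple Verma modules using Lemma~\ref{lemsl2a}, and apply Lemma~\ref{lemtwVerma} summand-by-summand, tracking weights and parity of the new singular vectors. The only difference is cosmetic: the paper organizes the decomposition as $\bigoplus_b \Hom_{\ft}(M_{\ft}(b),N)\boxtimes M_{\ft}(b)$ over $\fh'\times\ft$, whereas you use an $\fh$-homogeneous basis of $\Ker(e)$ — both yield the same computation.
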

\begin{proof}
We set $\ft:=\fsl_2$ for (i) and $\ft:=\mathfrak{osp}(1|2)$
for (ii).

By~\Prop{proptwEnright} (i)
 $\cC_a$ commutes with the restriction functor $Res^{\fg}_{\ft+\fh}$.
Thus it is enough to consider the case $\fg=\ft+\fh$. One has $\fg=\fh'\times\ft$,
where $\fh'$ is the centralizer of $\ft$. The module $\Res^{\fg}_{\ft} N$
is semisimple, see~\Lem{lemsl2a}. Therefore
$$N=\bigoplus_{b\in a+\mathbb{Z}} \Hom_{\ft}(M_{\ft}(b), N)\boxtimes M_{\ft}(b)$$
and
$$\cC_a(N)=\bigoplus_{b\in a+\mathbb{Z}} \Hom_{\ft}(M_{\ft}(b), N)\boxtimes \cC_a(M_{\ft}(b)).$$

Thus it is enough to verify the assertion for {$\fg=\ft$ and  $N=M_{\ft}(b)=M_{\ft}(x\alpha)$ ($x=\frac{b}{2}$)}.

Consider the case $\ft=\fsl_2$.
 By~\Lem{lemtwVerma},
$\cC_a(M_{\fsl_2}(x\alpha))=M_{\fsl_2}(-(x+1)\alpha)$, so
$$\begin{array}{rl}
(1-e^{-\alpha})
\ch_{\epsilon} \cC_a(M_{\fsl_2}(x\alpha)))&=(1-e^{-\alpha})\ch_{\epsilon} M_{\fsl_2}(-(x+1)\alpha)
=e^{-(x+1)\alpha}\\
&=e^{-\alpha}
r_{\alpha}\bigl((1-e^{-\alpha})\ch_{\epsilon} M_{\fsl_2}(x\alpha)
\bigr)\end{array}$$
as required.

Consider the case $\ft=\mathfrak{osp}(1|2)$.
One has
 $\ch_{\epsilon} M_{\mathfrak{osp}(1|2)}(y\alpha)=(1-\epsilon e^{-\alpha/2})^{-1}e^{y\alpha}$
for any $y\in\mathbb{C}$.
 By~\Lem{lemtwVerma},
$\cC_a(M_{\mathfrak{osp}(1|2)}(x\alpha))=\Pi(M_{\mathfrak{osp}(1|2)}(-(x+1/2)\alpha))$.
We obtain
$$\begin{array}{l}(1-\epsilon e^{-\frac{\alpha}{2}})
\ch_{\epsilon} \cC_a(M_{\mathfrak{osp}(1|2)}
(x\alpha)))=\epsilon(1-\epsilon e^{-\frac{\alpha}{2}})
\ch_{\epsilon} M_{\mathfrak{osp}(1|2)}
(-(x+\frac{1}{2})\alpha)\\
\ \ =\epsilon e^{-(x+\frac{1}{2})\alpha}
=\epsilon e^{-\frac{\alpha}{2}}r_{\alpha}\bigl((1-\epsilon e^{-\frac{\alpha}{2}})\ch_{\epsilon} M_{\mathfrak{osp}(1|2)}(x\alpha)\bigr)\end{array}$$
as required.
\end{proof}

\subsubsection{Example}\label{exa1} This example illustrates that $\cC_aM$ may not admit $\fh$-character even if $M$ admits it.
Take $\fg=\fsl_2$ and $\fh=\mathbb{C}h$. The module
$$N:=\displaystyle\bigoplus_{i=0}^{\infty}  M_{\fsl_2}(a-2i)$$
admits $\fh$-character. For $a\not\in\mathbb{Z}$ one has
$$\cC_a(N)\cong \displaystyle\bigoplus_{i=0}^{\infty}  M_{\fsl_2}(-a+2i-2)$$
and $\dim \cC_a(N)_{-a}=\infty$, so $\cC_a(N)$ does not admit $\fh$-character.

\subsubsection{Example}\label{exasl3}
Take $\fg=\fsl_3$ with the simple roots $\alpha_1,\alpha_2$
and consider
the $\fsl_2$-copy $\fsl_2(\alpha_1+\alpha_2)\subset\fg$ corresponding to the root $\alpha_1+\alpha_2$. Set $\fg':=\fh+\fsl_2$. Let $v_{\lambda}$
be the highest weight vector of a Verma $\fg$-module $M(\lambda)$.

Since $e\in\fg_{\alpha_1+\alpha_2}$
and  $e(M(\lambda)_{\lambda-i\alpha_1})\subset M(\lambda)_{\lambda-(i-1)\alpha_1
+\alpha_2}=0$,
the module $M(\lambda)$ contains
$\fg'$-submodules $M_{\fg'}(\lambda-i\alpha_1)$ for $i\geq 0$. Moreover, by PBW theorem,
the sum of these submodules is direct, so
$M(\lambda)$ contains
$\oplus_{i=0}^{\infty} M_{\fg'}(\lambda-i\alpha_1)$.
Assume that $a:=\lambda(h)\not\in\mathbb{Z}$. From~\Lem{lemtwVerma}
we conclude that $\cC_a(M_{\fg'}(\nu))=M_{\fg'}(r_{\alpha_1+\alpha_2}\nu-
(\alpha_1+\alpha_2))$.
Therefore $\cC_a(M(\lambda))$ contains a $\fg'$-submodule
$$N':=\cC_a\bigl(\displaystyle\bigoplus_{i=0}^{\infty} M_{\fg'}(\lambda-i\alpha_1)\bigr)=
\displaystyle\bigoplus_{i=0}^{\infty} M_{\fg'}(r_{\alpha_1+\alpha_2}\lambda-\alpha_1+(i-1)\alpha_2).$$

In particular, $\cC_a(M(\lambda))$  does not lie in the category $\CO$ for $\fsl_3$.
Moreover,   $M(\lambda)$ admits  $\fh'$-character and
$\cC_a(M(\lambda))$  does not admit  $\fh'$-character for
$\fh':=\mathbb{C}h$.

\section{Enright functor for Kac--Moody superalgebras}\label{En2}
Let $\fg$ be a Kac--Moody Lie superalgebra.
Retain notations of~\S~\ref{contra}. To simplify notations we
identify the modules which differ by parity change.

In this section
 $\alpha\in\Pi_{pr}$
and $\fsl_2=\fsl_2(\alpha)$ is  such that
$$e\in\fg_{\alpha}, f\in\fg_{-\alpha},\
h=\alpha^{\vee}.$$

If $\frac{\alpha}{2}\in\Delta$, then
$\fg$ contains a copy of $\mathfrak{osp}(1|2)$ with $\mathfrak{osp}(1|2)_{\ol{0}}=\fsl_2(\alpha)$
and $\mathfrak{osp}(1|2)_{\ol{1}}=\fg_{-\alpha/2}+\fg_{\alpha/2}$.

Retain notation of~\ref{COK}.
We denote by  $\CO^{fin}(a)$ the intersection of $\CO^{fin}$ and $\cM(a)$,
i.e. the full subcategory of $\CO$ with finitely generated objects, where
the eigenvalues of $h$ lie in $a+\mathbb{Z}$.

In~\Thm{thmeqO} we will show that for $a\not\in\mathbb{Z}$ the functor
 $\cC_a$ gives an equivalence of categories
${\CO}^{fin}(a)\iso{\CO}^{fin}(-a)$. The assumption  $\alpha\in\Pi_{pr}$ is crucial. Indeed Example~\ref{exasl3}
illustrates that if $\alpha\not\in \Pi_{pr}$, then
$\cC_a(M(\lambda))$ may be not  in the category ${\CO}(-a)$;
example~\ref{exa1} shows that the character formula in~\Thm{thmeqO} (iii)
does not hold for the category $\CO$.

\subsection{}\label{sub1}
\begin{prop}{thmVerma}
Assume that $\alpha\in\Sigma$ or $\alpha/2\in\Sigma$.

(i) Let $N$ be a module in $\cM(a)^e$ and let $v\in N_{\nu}$ be a singular vector. The vector
 $$v':=\left\{\begin{array}{ll}
    f^{\nu(h)+1}\otimes v\text{ if }\alpha\in\Sigma,\\
   f^{\nu(h)}\otimes Fv\text{ if }\alpha/2\in\Sigma
    \end{array}   \right. $$
has weight $r_{\alpha}.\nu$ and    is a singular vector in $\cC_a(N)$.

(ii) One has $\cC_a(M(\lambda))=M(r_{\alpha}.\lambda)$ if
$a:=\lambda(\alpha^{\vee})$ is not integral.

(iii) Let $a\not\in\mathbb{Z}$.
If $N\in \CO^{fin}(a)$ is generated by a singular vector $v$ of weight $\nu$, then
$\cC_a(N)\in \CO^{fin}(-a)$ is generated by a singular vector $v'$ described
as in (i).
\end{prop}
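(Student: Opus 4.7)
\emph{Plan.} For part (i), the weight of $v'$ follows by direct commutation: using $[h',f]=-\alpha(h')f$ and $[h',F]=-(\alpha/2)(h')F$ for $h'\in\fh$, the weight of $f^{\nu(h)+1}\otimes v$ is $\nu-(\nu(\alpha^\vee)+1)\alpha$ and that of $f^{\nu(h)}\otimes Fv$ is $\nu-\nu(\alpha^\vee)\alpha-\alpha/2$, each equal to $r_\alpha.\nu$ (unpacking the twisted action with $\rho(\alpha^\vee)=1$ when $\alpha\in\Sigma$ and $\rho(\alpha^\vee)=1/2$ when $\alpha/2\in\Sigma$). For singularity, fix a simple root $\gamma\in\Sigma$. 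When $\gamma=\alpha$ (resp.\ $\gamma=\alpha/2$), restrict to the $\fsl_2$ (resp.\ $\mathfrak{osp}(1|2)$) subalgebra of $\fg$: the cyclic submodule $U(\fsl_2)v\subset N$ (resp.\ $U(\mathfrak{osp}(1|2))v$) is a quotient of the corresponding rank-one Verma module, and~\Lem{lemtwVerma}(i),(ii) directly produce $ev'=0$ (resp.\ $Ev'=0$). When $\gamma\neq\alpha$ (resp.\ $\gamma\neq\alpha/2$), the defining relation $[e_\gamma,f_\alpha]=0$ (resp.\ $[e_\gamma,F_{\alpha/2}]=0$) of $\fg(A)$ forces $[e_\gamma,f]=0$ and $[e_\gamma,F]=0$; the twisted element $\phi_a(e_\gamma)=\sum_i\binom{a}{i}(\ad f)^i(e_\gamma)f^{-i}$ therefore collapses to $e_\gamma$, and one computes
\begin{equation*}
e_\gamma v'=f^{\nu(h)+1}\otimes e_\gamma v=0 \qquad\text{or}\qquad e_\gamma v'=f^{\nu(h)}\otimes Fe_\gamma v=0,
\end{equation*}
using that $v$ is singular. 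Since $ev'=0$, also $v'\in\cC_a(N)$.

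For part (ii), applying (i) to $v=v_\lambda$ gives a nonzero singular vector $v'\in\cC_a(M(\lambda))$ of weight $r_\alpha.\lambda$ and hence a nonzero homomorphism $M(r_\alpha.\lambda)\to\cC_a(M(\lambda))$. To identify these modules I will compare characters via~\Thm{thmchar}. For $\alpha\in\Sigma$, setting $\epsilon=1$ in part~(i) of that theorem gives
\begin{equation*}
e^{\alpha/2}(1-e^{-\alpha})\ch\cC_a(M(\lambda))=r_\alpha\bigl(e^{\alpha/2}(1-e^{-\alpha})\ch M(\lambda)\bigr);
\end{equation*}
combining with $\ch M(\lambda)=e^\lambda R^{-1}$ and the identity $r_\alpha R=-Re^\alpha$ extracted from~\Lem{lemR} yields $\ch\cC_a(M(\lambda))=e^{r_\alpha.\lambda}R^{-1}=\ch M(r_\alpha.\lambda)$. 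The case $\alpha/2\in\Sigma$ is analogous using~\Thm{thmchar}(ii) together with $r_\alpha R=-Re^{\alpha/2}$. Since a nonzero map from a Verma module onto a weight module with the same character must be an isomorphism, $\cC_a(M(\lambda))\cong M(r_\alpha.\lambda)$.

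For part (iii), the hypothesis $a\not\in\mathbb{Z}$ makes $\cC_a:\cM(a)^e\to\cM(-a)^e$ an equivalence of categories by~\Thm{propid}(ii) and hence, in particular, exact. Write $N$ as a quotient $\pi:M(\nu)\twoheadrightarrow N$ with $\pi(v_\nu)=v$. Applying $\cC_a$ yields a surjection $M(r_\alpha.\nu)=\cC_a(M(\nu))\twoheadrightarrow\cC_a(N)$ by (ii), and naturality sends the highest weight vector $f^{\nu(h)+1}\otimes v_\nu$ of $M(r_\alpha.\nu)$ (or its $\mathfrak{osp}(1|2)$ analogue) to $v'$. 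Therefore $\cC_a(N)=U(\fg)v'$ is a highest weight module generated by $v'$, and in particular it lies in $\CO^{fin}(-a)$.

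The main subtle point is the commutation check in~(i) for $\gamma\neq\alpha$: one must notice that the a priori complicated twisted formula $\phi_a(e_\gamma)$ governing the action of $e_\gamma$ on $\dD_a(N)$ reduces to the untwisted action because of the Serre-type defining relations, so the singularity of $v$ in $N$ propagates cleanly to singularity of $v'$ in $\cC_a(N)$.
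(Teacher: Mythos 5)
Your parts (i) and (iii) are correct and essentially follow the paper's route: the paper compresses (i) to a citation of \Lem{lemtwVerma} (your explicit check that $\phi_c(e_\gamma)=e_\gamma$ for $\gamma\neq\alpha,\alpha/2$ is the right way to fill that in), and it derives (iii) from (i) and (ii) exactly as you do. The problem is in part (ii), where you replace the paper's argument by a character comparison via \Thm{thmchar}. That theorem is conditional: it applies only ``if $N$ and $\cC_a(N)$ admit the $\fh$-characters,'' and you never verify that $\cC_a(M(\lambda))$ has finite-dimensional weight spaces. This is not a formality: $\dD_a(M(\lambda))$ itself has infinite-dimensional weight spaces, and Examples~\ref{exa1} and~\ref{exasl3} in the paper exist precisely to show that $\cC_a$ can destroy the character property --- the latter for a Verma module, when $\alpha\notin\Pi_{pr}$. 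Proving finiteness of $\dim\cC_a(M(\lambda))_\xi$ for simple $\alpha$ requires a separate argument (e.g.\ decomposing $M(\lambda)$ over $\fsl_2(\alpha)+\fh$ into rank-one Verma modules and using finiteness of $\alpha$-root strings), and by the time you have that you are most of the way to proving (ii) directly.

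Second, even granting $\ch\cC_a(M(\lambda))=\ch M(r_\alpha.\lambda)$, your final step does not close. Part (i) gives a nonzero $\varphi\colon M(r_\alpha.\lambda)\to\cC_a(M(\lambda))$ whose image is the highest weight submodule generated by $v'$; but a weight module with the character of a Verma module need not be generated by its top weight space, so surjectivity of $\varphi$ is not automatic, and injectivity is not automatic either, since over a superalgebra $U(\fn^-)$ has zero divisors and nonzero maps between Verma modules need not be embeddings. Character equality upgrades $\varphi$ to an isomorphism only once you know one of the two. The paper avoids both issues by invoking \Thm{propid}: applying $\cC_{-a}$ to $\varphi(M(r_\alpha.\lambda))\subset\cC_a(M(\lambda))$ produces, via $\cC_{-a}\cC_a\simeq\operatorname{Id}$, a submodule of $M(\lambda)$ containing a singular vector of weight $\lambda$ by (i), hence equal to $M(\lambda)$; transporting back through the equivalence identifies $\varphi(M(r_\alpha.\lambda))$ with $\cC_a(M(\lambda))$. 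I recommend adopting that argument (or supplying the two missing verifications) for (ii).
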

\begin{proof}
  \Lem{lemtwVerma} implies (i). For (ii) take
$a\not\in\mathbb{Z}$.
By ~\Thm{propid} $\cC_a: \cM(a)^e\to \cM(-a)^e$
is equivalence of categories.
Now (i) gives
   $$\Hom_{\fg}(M(r_\alpha.\lambda),\cC_a(M(\lambda))\not=0.$$
   Let $\varphi\in \Hom_{\fg}(M(r_\alpha.\lambda),\cC_a(M(\lambda))$ and $\varphi\neq 0$. Then $\cC_{-a}\varphi(M(r_\alpha.\lambda))$ is a submodule of $M(\lambda)$
   which by (i) has a singular vector of weight $\lambda$. Hence we have an isomorphism $\cC_{-a}\varphi(M(r_\alpha.\lambda))\simeq M(\lambda)$ which implies
   an isomorphism
$$\cC_a\cC_{-a}\varphi(M(r_\alpha.\lambda))\simeq \varphi(M(r_\alpha.\lambda))\simeq \cC_aM(\lambda).$$
Now (iii) follows from (i) and (ii).
 \end{proof}

\subsection{}\label{sub3}
\begin{thm}{thmeqO}
Let $\alpha$ or $\alpha/2\in\Sigma$
and $a\not\in\mathbb{Z}$.

(i) The Enright functor
$\cC_a$ induces an equivalence of categories
${\CO}^{fin}(a)\iso {\CO}^{fin}(-a)$.

(ii) If $\lambda(h)\in\mathbb{Z}+a$, then
$\cC_a(L(\lambda))=L(r_{\alpha}.\lambda)$.

(iii) If $N\in {\CO}\cap\cM(a)$ is such that $\cC_a(N)\in {\CO}$, then
$Re^{\rho}\ch \cC_a(N)=r_{\alpha}\bigl(Re^{\rho}\ch N\bigr)$.
In particular,  the formula holds for $N\in {\CO}^{fin}$.
\end{thm}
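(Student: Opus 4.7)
The plan is to treat (i), (ii), (iii) in order, using \Thm{propid}, \Prop{thmVerma}, \Thm{thmchar} and \Lem{lemR}.

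For (i), \Thm{propid} (ii) already shows that $\cC_a\colon\cM(a)^e\iso\cM(-a)^e$ is an equivalence of categories (in particular exact, and preserving simples), so it remains to verify that this equivalence restricts to $\CO^{fin}(a)\iso\CO^{fin}(-a)$. By~\S~\ref{Ofininf}, every object of $\CO^{fin}$ admits a finite filtration by highest weight modules, so exactness of $\cC_a$ reduces the problem to checking that $\cC_a$ sends a highest weight module in $\CO^{fin}(a)$ into $\CO^{fin}(-a)$; this is \Prop{thmVerma} (iii). The same argument applied to $\cC_{-a}$ gives the inverse direction. For (ii), \Prop{thmVerma} (iii) shows that $\cC_a(L(\lambda))$ is a highest weight module with singular vector of weight $r_\alpha.\lambda$; since $\cC_a$ is an equivalence it preserves simplicity, so $\cC_a(L(\lambda))=L(r_\alpha.\lambda)$.

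For (iii), the task is to upgrade the character identity of \Thm{thmchar} to the Weyl denominator formula. Evaluating \Thm{thmchar} at $\epsilon=1$ gives
$$S_\alpha\,\ch\cC_a(N)\;=\;r_\alpha\bigl(S_\alpha\,\ch N\bigr),$$
where $S_\alpha:=e^{\alpha/2}(1-e^{-\alpha})$ if $\alpha\in\Sigma$ and $S_\alpha:=e^{\alpha/4}(1-e^{-\alpha/2})$ if $\alpha/2\in\Sigma$. A direct computation shows that $r_\alpha S_\alpha=-S_\alpha$, and \Lem{lemR} gives the analogous skew identity $r_\alpha(Re^\rho)=-Re^\rho$. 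Hence the quotient $Y_\alpha:=Re^\rho/S_\alpha$ is $r_\alpha$-invariant, and it is well-defined as a formal series because $S_\alpha$ divides $Re^\rho$: when $\alpha\in\Sigma$ the factor $(1-e^{-\alpha})$ appears in $R_{\ol 0}$, and when $\alpha/2\in\Sigma$ the ratio $(1-e^{-\alpha})/(1+e^{-\alpha/2})=1-e^{-\alpha/2}$ arises naturally from $R_{\ol 0}R_{\ol 1}^{-1}$. Multiplying the character identity through by $Y_\alpha$ and using its $r_\alpha$-invariance then yields $Re^\rho\,\ch\cC_a(N)=r_\alpha(Re^\rho\,\ch N)$. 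The concluding ``in particular'' clause is immediate from (i): any $N\in\CO^{fin}(a)$ has $\cC_a(N)\in\CO^{fin}(-a)\subset\CO$, so the hypothesis of (iii) is automatic.

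The main technical obstacle lies in the odd case $\alpha/2\in\Sigma$ of (iii): one must verify that the divisibility of $Re^\rho$ by $S_\alpha$ is clean (no denominators appear in the formal-series sense once the cancellation $(1-e^{-\alpha})/(1+e^{-\alpha/2})=1-e^{-\alpha/2}$ is carried out) and that the $\epsilon=1$ specialisation of \Thm{thmchar} (ii) delivers exactly the skew-symmetric identity needed for the multiplicative step. Once this bookkeeping is done, the formula follows mechanically from the two skewness properties.
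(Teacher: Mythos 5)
Your proposal is correct and follows essentially the same route as the paper: (i) via the finite filtration by highest weight modules plus exactness and Proposition~\ref{thmVerma}, (ii) via preservation of simplicity under the equivalence, and (iii) by factoring $Re^{\rho}$ as an $r_{\alpha}$-invariant cofactor times $e^{\beta/2}(1-e^{-\beta})$ and invoking Theorem~\ref{thmchar}. The only cosmetic difference is that you deduce the $r_{\alpha}$-invariance of the cofactor $Re^{\rho}/S_{\alpha}$ from the two skew-symmetries, whereas the paper exhibits it directly as $R_{\alpha}e^{\rho-\beta/2}$ with $r_{\alpha}R_{\alpha}=R_{\alpha}$ from the proof of Lemma~\ref{lemR}.
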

\begin{proof}
First, let us verify that $\cC_a(N)\in {\CO}^{fin}(-a)$  for any
$N\in {\CO}^{fin}(a)$.
If $N$ is a highest weight module, this follows from~\Prop{thmVerma}.
The general case follows from~\Lem{lemtildeCO} and exactness of $\cC_a$.

Since $\cC_a$ induces an equivalence of categories $\cM(a)^e\iso\cM(-a)^e$
and $\cC_a({\CO}^{fin}(a))\subset {\CO}^{fin}(-a)$, we obtain (i).

Since $L(\lambda)$ is a unique simple quotient of $M(\lambda)$,
(ii) follows from (i) and~\Prop{thmVerma} (i).

For (iii) we introduce {$\beta, R_{\alpha}$ as in the proof of~\Lem{lemR};
 recall that $r_{\alpha}R_{\alpha}=R_{\alpha}$ and
$R=R_{\alpha}(1-e^{-\beta})$.}
Using~\Thm{thmchar} we get
$$\begin{array}{ll}
Re^{\frac{\beta}{2}}\ch \cC_a(N)=&R_{\alpha}e^{\frac{\beta}{2}}(1-e^{-\beta})\ch \cC_a(N)=
R_{\alpha}r_{\alpha}\bigl(e^{\frac{\beta}{2}}(1-e^{-\beta})\ch N\bigr)\\
&=r_{\alpha}\bigl(R_{\alpha}e^{\frac{\beta}{2}}
(1-e^{-\beta})\ch N\bigr)=r_{\alpha}\bigl(Re^{\frac{\beta}{2}}\ch N\bigr).\end{array}$$
Since $r_{\alpha}(\rho-\frac{\beta}{2})=\rho-\frac{\beta}{2}$
this gives the required formula.
\end{proof}

\subsection{Remark}
We denote by $\tilde{\CO}^{fin}$ the full category of $\fg$-modules $N$
with the finitely generated objects satisfying the following property:
$\fh$ acts locally finitely and for any $v\in N$ one has
$\dim U(\fn^+)v<\infty$.

The statements (i), (iii) of~\Thm{thmeqO} hold for the category $\tilde{\CO}^{fin}$.
Similarly to~\ref{Ofininf} we have the following lemma.

\subsubsection{}
\begin{lem}{lemtildeCO}
A $\fg$-module $N$ lies in $\tilde{\CO}^{fin}$ if and only if $N$ admits
a  finite filtration whose quotients are
highest weight modules.
\end{lem}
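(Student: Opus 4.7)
We argue both implications.

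For the \emph{if} direction, first check that any highest weight module $M = U(\fg)u$ lies in $\tilde{\CO}^{fin}$: it is cyclic; $\fh$ acts diagonally because $U(\fg)u = U(\fn^-)u$ by PBW; and for any $v \in M$, the support of $U(\fn^+)v$ lies in $(\supp(v)+Q^+)\cap\{\mu\le\wt u\}$, a finite set (only finitely many $\nu\in Q^+$ lie below a fixed element of $Q^+$), while each weight space of $M$ is finite-dimensional, so $\dim U(\fn^+)v<\infty$. Closure of $\tilde{\CO}^{fin}$ under extensions follows from Noetherianness of $U(\fh)=\Sym\fh$ (for local finiteness of $\fh$: in an extension $0\to A\to B\to C\to 0$, the kernel $U(\fh)v\cap A$ is finitely generated, and each generator lies in a finite-dimensional $U(\fh)$-submodule of $A$) together with the boundedness of supports (for the $U(\fn^+)$-finiteness condition).

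For the \emph{only if} direction, let $N\in\tilde{\CO}^{fin}$ be generated as a $\fg$-module by $v_1,\ldots,v_k$, and set
$$W := \sum_{i=1}^k U(\fn^+)\,U(\fh)\,v_i \subseteq N.$$
Since $[\fh,\fn^+]\subseteq\fn^+$, one has $U(\fn^+)U(\fh)=U(\fh)U(\fn^+)=U(\fb)$ as subspaces of $U(\fg)$, so $W=\sum_i U(\fb)v_i$ is $\fb$-stable. The crucial point is that $W$ is finite-dimensional: each $U(\fh)v_i$ is finite-dimensional by local finiteness of $\fh$, and for each $w$ in this finite-dimensional space the space $U(\fn^+)w$ is finite-dimensional by hypothesis, whence $W$ is a finite sum of finite-dimensional subspaces.

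Every $x\in\fn^+$ has a positive weight, so $x^m$ shifts generalized weights by $m\,\wt(x)$, which eventually leaves the finite support of $W$; hence each element of $\fn^+$ acts nilpotently on $W$. Engel's theorem for Lie superalgebras applied to the image of $\fn^+$ in $\End W$ then gives $W^{\fn^+}\ne 0$. Since $W^{\fn^+}$ is $\fh$-stable and $\fh$ is a finite-dimensional abelian Lie algebra acting locally finitely, simultaneous triangularization on the finite-dimensional space $W^{\fn^+}$ produces an honest (not merely generalized) $\fh$-eigenvector $u_1\in W^{\fn^+}$. Iterating on the quotient $W/\CC u_1$ yields a $\fb$-module filtration
$$0=W^{(0)}\subset W^{(1)}\subset\cdots\subset W^{(m)}=W$$
in which each $W^{(j)}/W^{(j-1)}$ is a one-dimensional $\fb$-module (trivial $\fn^+$-action, $\fh$ acting by a character $\mu_j$). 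Setting $N^{(j)}:=U(\fg)W^{(j)}$ then gives the desired filtration of $N$: $N^{(0)}=0$, $N^{(m)}=U(\fg)W\supseteq U(\fg)\{v_1,\ldots,v_k\}=N$, and $N^{(j)}/N^{(j-1)}$ is cyclic, generated by the image of $u_j$, which is singular in the quotient because $\fn^+u_j\subseteq W^{(j-1)}\subseteq N^{(j-1)}$ and $hu_j-\mu_j(h)u_j\in W^{(j-1)}\subseteq N^{(j-1)}$ for all $h\in\fh$; hence $N^{(j)}/N^{(j-1)}$ is a highest weight module. The principal subtlety is establishing finiteness and $\fb$-stability of $W$ when $\fh$ does not act diagonally on $N$; the commutation $[\fh,\fn^+]\subseteq\fn^+$ combined with the two finiteness hypotheses in the definition of $\tilde{\CO}^{fin}$ is exactly what makes the argument work.
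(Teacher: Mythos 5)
Your proof is correct and follows essentially the same route as the paper's (very terse) argument: for the ``if'' direction, closure of $\tilde{\CO}^{fin}$ under extensions plus the fact that highest weight modules lie in $\tilde{\CO}^{fin}$; for ``only if'', the observation that the finite-dimensional $\fb$-stable subspace $W=\sum_i U(\fb)v_i$ attached to a finite generating set admits a $\fb$-filtration with one-dimensional quotients, whose terms generate the desired filtration of $N$ by submodules with highest weight quotients. The only compressed spot is the extension-closure of the $U(\fn^+)$-finiteness condition, but your appeal to boundedness of supports does close it, since every module in $\tilde{\CO}^{fin}$ equals $U(\fn^-)$ applied to such a finite-dimensional $W$ and therefore has finite-dimensional generalized weight spaces supported in finitely many cones $\lambda-Q^+$.
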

\begin{proof}
Note that for a short exact sequence
$$0\to N_1\to N\to N_2\to 0$$
with  $N_1,N_2\in  \tilde{\CO}^{fin}$
one has $N\in  \tilde{\CO}^{fin}$. As a result, ``if''
follows by induction on the length of the filtration.

One readily sees that every  cyclic module in $\tilde{\CO}^{fin}$,
which is
 generated by a weight vector,
admits a  finite filtration with the quotients which are
highest weight modules. Since any
 module in  $\tilde{\CO}^{fin}$ is finitely generated, it
admits such filtration.
\end{proof}

\subsection{}
\begin{prop}{thmaint}
Let $\alpha$ or $\alpha/2\in\Sigma$.
Let $\nu\in\fh^*$ be such that $\nu(h)\in\mathbb{Z}$.
One has $\cC_0(M(\nu))=M(\nu')$, where $\nu'=\nu$ if $\nu(h)\geq 0$ and
$\nu':=r_{\alpha}.\nu$ otherwise.
\end{prop}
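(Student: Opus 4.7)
The plan is to reduce the claim to the rank-one cases of \Lem{lemtwVerma} via parabolic induction from $\ft=\fsl_2(\alpha)$ (Case~A: $\alpha\in\Sigma$) or $\ft=\mathfrak{osp}(1|2)$ (Case~B: $\alpha/2\in\Sigma$). I would set $\fn^-_\ft:=\fg_{-\alpha}$ in Case~A or $\fn^-_\ft:=\fg_{-\alpha}\oplus\fg_{-\alpha/2}$ in Case~B, $\fp:=\fh\oplus\fn^+\oplus\fn^-_\ft$, and $\fn^+_c:=\bigoplus_{\gamma\in\Delta^+\setminus\{\alpha,\alpha/2\}}\fg_\gamma$. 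Using simplicity of $\alpha$ (resp.\ $\alpha/2$) and the fact that $2\alpha$ and $3\alpha/2$ are not roots, $\fp$ is a subalgebra and $[\fn^+_c,\fn^-_\ft]\subset\fn^+_c$. A double induction on $k$ and on the $\alpha$-height of $X\in\fn^+_c$ then shows $Xf^k v_\nu=0$ (and similarly with $F$ in Case~B), so the $\ft$-Verma $M_\ft(\nu)$ promotes to a $\fp$-module with $\fn^+_c$ acting trivially. By PBW, $M(\nu)\cong\Ind^\fg_\fp M_\ft(\nu)$.

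Via $U(\fg)\cong U(\fn^-_c)\otimes U(\fp)$ and the Ore condition at $\{f^s\}$, one checks that twisted localization commutes with parabolic induction, $\dD_0\circ\Ind^\fg_\fp\cong\Ind^\fg_\fp\circ\dD_0'$. Applied to the short exact sequence of \Lem{lemtwVerma}, $0\to M_\ft(\nu)\to\dD_0'(M_\ft(\nu))\to Q_\ft\to 0$, this gives
\[
0\to M(\nu)\to\dD_0 M(\nu)\to\Ind^\fg_\fp Q_\ft\to 0,
\]
and the left-exact functor $\Gamma_e$ supplies the inclusion $M(\nu)\hookrightarrow\cC_0(M(\nu))$ with cokernel embedding into $\Gamma_e(\Ind^\fg_\fp Q_\ft)$.

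For $\nu(h)\geq -1$ (so $\nu'=\nu$), \Lem{lemtwVerma} yields $\Gamma_e(Q_\ft)=0$. I would then prove $\Gamma_e(\Ind^\fg_\fp Q_\ft)=0$ as follows: decompose $U(\fn^-_c)$ under $\ad\ft$ into a direct sum of finite-dimensional irreducibles $L_\ft(k_s)$ (complete reducibility holds for finite-dimensional $\ft$-modules in both cases), so as $\ft$-modules
\[
\Ind^\fg_\fp Q_\ft\cong\bigoplus_s L_\ft(k_s)\otimes Q_\ft
\]
with the tensor action. Inside a single summand write $w=\sum_{l=0}^{k_s}(\ad f)^l Y_s\otimes v_l$ with $Y_s$ the $\ad e$-highest weight vector. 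The $\fsl_2$-string relation $e\cdot(\ad f)^l Y_s=l(k_s-l+1)(\ad f)^{l-1}Y_s$ shows that the coefficient of $(\ad f)^{k_s}Y_s$ in $e^K w$ is $e^K v_{k_s}$; its vanishing together with $\Gamma_e(Q_\ft)=0$ forces $v_{k_s}=0$. Descending induction on $l$ eliminates every $v_l$, so $w=0$, and therefore $\cC_0(M(\nu))=M(\nu)=M(\nu')$.

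For $\nu(h)\leq -2$ one has $(r_\alpha.\nu)(h)\geq 0$, so $M(r_\alpha.\nu)$ contains a $\fg$-singular vector of weight $\nu$ by \Prop{thmVerma}(i), yielding
\[
0\to M(\nu)\to M(r_\alpha.\nu)\to L\to 0,
\]
where $L\cong\Ind^\fg_\fp L_\ft(r_\alpha.\nu)$ with $L_\ft(r_\alpha.\nu)$ a finite-dimensional $\ft$-quotient of $M_\ft(r_\alpha.\nu)$. Since $f$ acts nilpotently on any finite-dimensional $\ft$-module, $\dD_0'(L_\ft(r_\alpha.\nu))=0$, hence $\dD_0(L)=0$ and $\cC_0(L)=0$. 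Left exactness of $\cC_0$ gives $\cC_0(M(\nu))=\cC_0(M(r_\alpha.\nu))$, and the previous paragraph applied to $r_\alpha.\nu$ identifies this with $M(r_\alpha.\nu)=M(\nu')$. The main technical obstacle is the vanishing step $\Gamma_e(\Ind^\fg_\fp Q_\ft)=0$: the interaction between $\ad e$ on $U(\fn^-_c)$ and $e$ on $Q_\ft$ has to be controlled, and the $\ft$-isotypic decomposition plus the $\fsl_2$-string descent seems to be the cleanest way to do it.
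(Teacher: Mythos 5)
Your proof is correct, and its core --- realizing $M(\nu)$ as $\Ind_\fp^\fg M_\ft(\nu)$ for the parabolic $\fp=\ft+\fn^+$ (with the ideal $\fn^+_c$ acting by zero), commuting $\dD_0$ with parabolic induction, and reducing to the rank-one computation of \Lem{lemtwVerma} --- is exactly the paper's argument. You diverge in two places. First, for the vanishing of $\Gamma_e$ on the induced cokernel the paper just observes that $[e,\fm^-]\subset\fm^-$ and that $e$ acts without torsion on the rank-one cokernel, so $(U(\fm^-)\otimes M')^e=0$; your $\ad\ft$-isotypic decomposition of $U(\fn^-_c)$ together with the $\fsl_2$-string descent is a more explicit (and valid) way of making that leading-term argument rigorous. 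Second, the paper needs no separate case for $\nu(h)\le -2$: it takes the submodule of $\dD_0(M_\ft(\nu))$ to be the full $e$-finite part $M_\ft(\nu')$ rather than $M_\ft(\nu)$, so the cokernel is the simple lowest-weight module $M_{\fb^-}(\nu'+x\alpha)$, free over $\mathbb{C}[e]$, in all cases, and one uniform argument finishes the proof; your detour through the embedding $M(\nu)\hookrightarrow M(r_\alpha.\nu)$ and the remark that the quotient, being locally finite over $\ft$, is killed by localization, is a clean alternative that buys nothing extra but costs little. One small correction: the singular vector of weight $\nu$ in $M(r_\alpha.\nu)$ is not what \Prop{thmVerma}(i) provides (that statement produces singular vectors in $\cC_a(N)$, not in $N$); the fact you need is the elementary rank-one identity $e(f^{b+1}v)=0$ from~(\ref{sl2formula}) and its $\mathfrak{osp}(1|2)$ analogue.
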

\begin{proof}
Set $\ft:=\fsl_2$ if $\alpha\in\Sigma$ and
$\ft:=\mathfrak{osp}(1|2)$ if $\alpha/2\in\Sigma$.

First, consider the case $\fg=\ft$. By~\Lem{lemtwVerma}, $\cC_0(M(\nu))=M(\nu')$
and $\dD_0(M(\nu))/M(\nu')\cong M_{\fb^-}(\nu'+x\alpha)$,
where $x=1$ (resp., $x=1/2$) for $\fsl_2$ (resp., for $\mathfrak{osp}(1|2)$);
the module $M_{\fb^-}(\nu'+x\alpha)$ is simple.
We set $M':=M_{\fb^-}(\nu'+x\alpha)$.

Now consider the case $\fg\not=\ft$.
Consider the parabolic subalgebra $\fp:=\ft+\fn^+$.
One has
$$M(\nu)=Ind_{\fp}^{\fg}\, M_{\ft}(a),\text{ where }a:=\nu(h)$$
and the $\fp$-action on $M_{\ft}(a)$ is given by the zero action of the radical of $\fp$.
The functors $Ind$ and $\dD$ commute (see, for example,~\cite{Gr}, Lem. 2.4), that is
$$\dD_0(M(\nu))=Ind_{\fp}^{\fg}\ \dD_0(M_{\ft}(a)).$$
One has $\fg=\fm^-\oplus\fp$, where
 $\fm^-$ is the radical of $\ft+\fn^-$.
We obtain the following exact sequence
$$0\to M(\nu')\to \dD_0(M(\nu))\to Ind_{\fp}^{\fg} M'\to 0$$
where $M'$ is the $\ft$-module introduced above with the zero action
of the radical of $\fp$. Since $(M')^e=0$ and $[e,\fm^-]\subset\fm^-$, one has
$$(Ind_{\fp}^{\fg}\, M')^e=(U(\fm^-)\otimes M')^e=0$$
so $\cC_0(M(\nu))=M(\nu')$ as required.
\end{proof}

\subsection{Chain of Enright functors}\label{chain}
Take $\alpha\in\Pi_{pr}$
 and consider
$\fsl_2=\fsl_2(\alpha)$. Since $\CO^{fin}=\oplus_{a\in\mathbb{C}/\mathbb{Z}}\CO^{fin}(a)$ we can define the Enright functor
$\cC^{\alpha}$  on $\CO^{fin}$  as the direct sum {of the functors} $\cC_a:\CO^{fin}(a)\to \CO^{fin}(-a)$ for all $a\in\mathbb C/\mathbb Z$.
For a sequence of roots $\alpha_1,\ldots,\alpha_s\in\Pi_{pr}$
we set
$$\cC^{\alpha_1,\ldots,\alpha_s}:=\cC^{{\alpha_s}}\,\circ
\cdots\,\circ \cC^{{\alpha_1}}.$$

\subsubsection{}
We retain notations of~\S~\ref{Deltalambda},\ref{COK}. Recall that
the category  $\CO^{fin}$ is the direct sum of
the categories $\CO_{\Delta'}$ for $\Delta'$ being root subsystems
of $W\Pi_{pr}$. Using~\Thm{thmeqO}
we obtain the following corollary.

\subsubsection{}
\begin{cor}{corOnu}
Let $\fg$ be a Kac--Moody Lie superalgebra.
Take $\alpha_1,\ldots,\alpha_s\in \Pi_{pr}$ such that
$\alpha_i\in\Sigma$ or $\frac{\alpha_i}{2}\in\Sigma$ for every $i$.
Let $\Delta'\subset W\Pi_{pr}$ be a root subsystem which is
 $(\alpha_1,\ldots,\alpha_s)\,$-friendly. Set
 $w:=r_{\alpha_s}
\ldots r_{\alpha_1}$.

(i) The functor  $\cC:=\cC^{\alpha_1,\ldots,\alpha_s}$ provides an equivalence of categories
$\CO^{fin}_{\Delta'}\to \CO^{fin}_{w\Delta'}$ and
$$\cC(M(\lambda))=M(w.\lambda),\ \ \cC(L(\lambda))=L(w.\lambda)$$
if $\Delta(\lambda)=\Delta'$.

(ii) For any $N\in \CO^{fin}_{\Delta'}$ one has
$\ \ Re^{\rho}\ch\, \cC(N)=w\bigl(Re^{\rho}\ch N\bigr)$.
\end{cor}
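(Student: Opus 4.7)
The plan is induction on $s$. The base case $s=0$ is the identity and is trivial.

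For $s \geq 1$, I would first analyze $\cC^{\alpha_1}$ restricted to $\CO^{fin}_{\Delta'}$. Since $\Delta'$ is $\alpha_1$-friendly, $\alpha_1\notin\Delta'$, so for any $\lambda$ with $\Delta(\lambda)=\Delta'$ the scalar $a:=\lambda(\alpha_1^{\vee})$ lies outside $\mathbb{Z}$. Hence every indecomposable $N\in\CO^{fin}_{\Delta'}$ lies in $\CO^{fin}(a)$ for such a non-integral $a$, and \Thm{thmeqO} applies: $\cC^{\alpha_1}$ is an equivalence onto its image, sends $M(\lambda)\mapsto M(r_{\alpha_1}.\lambda)$ and $L(\lambda)\mapsto L(r_{\alpha_1}.\lambda)$, and satisfies $Re^{\rho}\ch\cC^{\alpha_1}(N)=r_{\alpha_1}\bigl(Re^{\rho}\ch N\bigr)$.

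Next I would identify the image as $\CO^{fin}_{r_{\alpha_1}\Delta'}$. Since $\alpha_1\in\Pi_{pr}$ (or $\alpha_1/2\in\Sigma$) we have $r_{\alpha_1}\rho-\rho\in\mathbb{Z}\Sigma$, so the twisted and linear actions coincide modulo $\mathbb{Z}\Sigma$. Combined with $\Delta(w\mu)=w\Delta(\mu)$ and $\Delta(\mu)=\Delta(\mu-\nu)$ for $\nu\in\mathbb{Z}\Sigma$ (both from \S\ref{Deltalambda}), this gives $\Delta(r_{\alpha_1}.\lambda)=r_{\alpha_1}\Delta(\lambda)=r_{\alpha_1}\Delta'$. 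I would then check that $r_{\alpha_1}\Delta'$ is $(\alpha_2,\ldots,\alpha_s)$-friendly: the conditions $r_{\alpha_1}r_{\alpha_2}\cdots r_{\alpha_{i-1}}\alpha_i\notin\Delta'$ for $i=2,\ldots,s$ rewrite as $r_{\alpha_2}\cdots r_{\alpha_{i-1}}\alpha_i\notin r_{\alpha_1}\Delta'$, which is exactly friendliness for the shortened sequence.

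Applying the inductive hypothesis to $\cC^{\alpha_2,\ldots,\alpha_s}$ on $\CO^{fin}_{r_{\alpha_1}\Delta'}$ and composing with $\cC^{\alpha_1}$ then gives (i). For (ii) the character identities compose: each step multiplies $Re^{\rho}\ch$ by the corresponding $r_{\alpha_i}$, and $r_{\alpha_s}\cdots r_{\alpha_1}=w$; for the Verma/simple module labels, associativity of the dot action yields $(r_{\alpha_s}\cdots r_{\alpha_2}).(r_{\alpha_1}.\lambda)=w.\lambda$. The substantive content has been absorbed into \Thm{thmeqO}; the main obstacle here is really just the bookkeeping that ensures friendliness propagates along the chain, so that at each step the non-integrality hypothesis of \Thm{thmeqO} continues to hold and the reduction is legitimate.
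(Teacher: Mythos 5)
Your proposal is correct and is exactly the argument the paper intends (the paper gives no written proof beyond ``Using Theorem~\ref{thmeqO} we obtain the following corollary''): iterate Theorem~\ref{thmeqO} (together with Proposition~\ref{thmVerma}(ii) for the Verma modules), using friendliness to guarantee non-integrality of $\lambda(\alpha_i^\vee)$ at each step, and the facts $r_{\alpha_1}\rho-\rho\in\mathbb{Z}\Sigma$ and $\Delta(\mu)=\Delta(\mu-\nu)$ for $\nu\in\mathbb{Z}\Sigma$ to identify the image block as $\CO^{fin}_{r_{\alpha_1}\Delta'}$. Your bookkeeping that friendliness of $\Delta'$ for $(\alpha_1,\dots,\alpha_s)$ transfers to friendliness of $r_{\alpha_1}\Delta'$ for $(\alpha_2,\dots,\alpha_s)$ is precisely the point that makes the induction go through.
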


\subsubsection{}
\begin{cor}{corOnu1}
  Let $\fg$ be a Kac--Moody Lie superalgebra. Take $\nu\in\fh^*$
and   $\beta\in\Pi(\nu)$.
By~\Lem{lemwmin} there exists $w=r_{\alpha_s}
\ldots r_{\alpha_1}\in W$ such that
  $\nu$ is $(\alpha_1,\ldots,\alpha_s)$-friendly and $w\beta\in\Pi_{pr}$.
   Take $\cC:=\cC^{\alpha_1,\ldots,\alpha_s}$.

  (i) The functor  $\cC$ provides an equivalence of
  categories $\CO^{fin}_{\Delta(\nu)}\iso \CO^{fin}_{w\Delta(\nu)}$.

  (ii)  If $\nu+\rho$ is typical, then
  $\cC(M(\nu))=M(w.\nu)$,  $\cC(L(\nu))=L(w.\nu)$.

(iii) If $\fg$ is symmetrizable and $\nu+\rho$ is typical, then
$$Re^{\rho}\ch L(w.\nu)=w\bigl(Re^{\rho}\ch L(\nu)).$$
\end{cor}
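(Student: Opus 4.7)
\emph{Proof proposal.} The plan is to reduce to the already-established~\Cor{corOnu} by applying the reflections one at a time, allowing the base $\Sigma_i \in \cB$ to change between steps. Two facts make this viable. First, the Enright functor $\cC^{\alpha}$ depends only on the $\fsl_2$-triple attached to $\{\alpha,-\alpha\}$, not on any choice of $\Sigma'$, and the category $\CO^{fin}$ itself is independent of $\Sigma' \in \cB$ by~\S\ref{Ofininf}. Second, for each $\alpha \in \Pi_{pr}$ we may by definition choose some $\Sigma_\alpha \in \cB$ with $\alpha \in \Sigma_\alpha$ or $\alpha/2 \in \Sigma_\alpha$. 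For each $i$ we fix such a base $\Sigma_i$ attached to $\alpha_i$.

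For~(i), set $\nu_i := r_{\alpha_i}\cdots r_{\alpha_1}.\nu$ and $\Delta_i := r_{\alpha_i}\cdots r_{\alpha_1}\Delta(\nu)$. Friendliness of $\nu$ translates into $\alpha_{i+1} \not\in \Delta_i$, which means that for every indecomposable $N \in \CO^{fin}_{\Delta_i}$ the eigenvalues of $h = \alpha_{i+1}^\vee$ lie in a non-integral coset $a+\mathbb{Z}$. Applying~\Thm{thmeqO}(i) in the base $\Sigma_{i+1}$ gives that $\cC^{\alpha_{i+1}}$ restricts to an equivalence $\CO^{fin}_{\Delta_i} \iso \CO^{fin}_{\Delta_{i+1}}$ (using that $\Delta(\cC^{\alpha_{i+1}} N) = r_{\alpha_{i+1}} \Delta(N)$). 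Composing these $s$ equivalences yields~(i).

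For~(ii), typicality of $\nu + \rho$ propagates to each $\nu_i + \rho$ by~\Lem{tipi}; and by~\S\ref{lambdalambda'}, for typical weights the Verma module and its simple quotient are intrinsic across odd reflections, in the sense that $M(\mu;\Sigma) = M(\mu';\Sigma')$ and $L(\mu;\Sigma) = L(\mu';\Sigma')$ whenever $\mu + \rho_\Sigma = \mu' + \rho_{\Sigma'}$. Hence at step $i+1$ we may reinterpret $M(\nu_i)$ as a Verma module in base $\Sigma_{i+1}$, apply~\Prop{thmVerma}(ii) there, and translate back to $\Sigma$, obtaining $\cC^{\alpha_{i+1}}(M(\nu_i)) = M(\nu_{i+1})$; the analogous argument with~\Thm{thmeqO}(ii) handles the simple modules. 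Iterating yields~(ii). For~(iii), iterating~\Thm{thmeqO}(iii) in the bases $\Sigma_1,\ldots,\Sigma_s$ gives
$$Re^{\rho}\ch L(w.\nu) = r_{\alpha_s}\bigl(\cdots r_{\alpha_1}(Re^{\rho}\ch L(\nu))\bigr) = w\bigl(Re^{\rho}\ch L(\nu)\bigr),$$
where base-independence of $Re^{\rho}\ch L(\mu)$ for typical $L(\mu)$, guaranteed by~\Prop{supptyp}, ensures that the applications of~\Thm{thmeqO}(iii) in different bases compose consistently.

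The only real obstacle — the one point where this is more than a transcription of~\Cor{corOnu} — is the bookkeeping for the base change $\Sigma \to \Sigma_{i+1}$ at each step. Typicality in~(ii)--(iii) is precisely what makes the base change transparent: without it neither the Verma modules, nor the simple modules, nor their twisted characters would transport uniformly across odd reflections, and one would lose the identifications needed to iterate.
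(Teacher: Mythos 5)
Your proposal is correct and follows essentially the same route as the paper's (much terser) proof: the paper likewise obtains (i) by applying \Thm{thmeqO}(i) while ``changing the base by odd reflections'' at each step, deduces (ii) from (i) together with the typicality discussion of \S~\ref{lambdalambda'}, and gets (iii) by combining (ii), \Prop{supptyp} and \Cor{corOnu}(ii) — which is exactly your step-by-step iteration of \Thm{thmeqO}(iii) packaged differently. The only cosmetic remark is that to propagate typicality in (ii) you need only the $W$-invariance of the set of typical weights recorded in \S~\ref{lambdalambda'}, rather than \Lem{tipi}, which is stated under a symmetrizability hypothesis not assumed in (ii).
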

\begin{proof}
  Applying~\Thm{thmeqO} (i) and changing the base by odd reflections, we obtain (i). Now (ii)
  follows from
(i) and~\S~\ref{lambdalambda'}. Combining (ii),~\Prop{supptyp}
and~\Cor{corOnu} (ii), we obtain (ii).
\end{proof}

\subsection{}
\begin{prop}{cortyp}
  Let $\fg$ be a symmetrizable Kac-Moody superalgebra
  and $\lambda\in\fh^*$ be such that $\lambda+\rho$ is  typical.

  (i) For $\beta\in\Pi(\lambda)$
 $$
   r_{\beta}\bigl(Re^{\rho}\ch L(\lambda)\bigr)=-Re^{\rho}\ch
   L(\lambda)\ \Longleftrightarrow\ (\lambda+\rho)(\beta^{\vee})>0.$$

 (ii) If $\lambda+\rho$ is non-critical  and
$(\lambda+\rho)(\alpha^{\vee})>0$
for each $\alpha\in\Pi(\lambda)$, then

$$Re^{\rho}\ch L(\lambda)=
\sum_{w\in {W}(\lambda)} (-1)^{l(w)} e^{w(\lambda+\rho)}.$$
\end{prop}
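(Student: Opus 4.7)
The plan is to prove (i) by reducing via the Enright chain functor to the case $\beta\in\Pi_{pr}$, and to deduce (ii) from (i) together with the Coxeter structure of $W(\lambda)$ and the description of the support and maximality of $\lambda+\rho$.

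For part (i), \Lem{lemwmin}(iii) gives $w=r_{\alpha_s}\cdots r_{\alpha_1}\in W$ with $\lambda$ being $w$-friendly and $w\beta\in\Pi_{pr}$. Applying $\cC:=\cC^{\alpha_1,\ldots,\alpha_s}$, \Cor{corOnu1} yields $\cC(L(\lambda))=L(w.\lambda)$ (typicality being $W$-invariant, cf.~\S\ref{lambdalambda'}) and $Re^{\rho}\ch L(w.\lambda)=w(Re^{\rho}\ch L(\lambda))$. Since $r_{w\beta}=w\,r_\beta\,w^{-1}$ on $\fh^*$ and $(w.\lambda+\rho)((w\beta)^\vee)=(\lambda+\rho)(\beta^\vee)$, applying $w$ to both sides of the candidate identity shows that the desired equivalence for $(\lambda,\beta)$ is equivalent to the same equivalence for $(w.\lambda,w\beta)$. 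So we may assume $\beta\in\Pi_{pr}$, pick $\Sigma''\in\cB$ with $\beta\in\Sigma''$ or $\beta/2\in\Sigma''$, and invoke \Prop{supptyp} to get $Re^{\rho}\ch L(\lambda)=R_{\Sigma''}e^{\rho_{\Sigma''}}\ch L(\lambda)$. By \Lem{lemR} in base $\Sigma''$, $r_\beta(R_{\Sigma''}e^{\rho_{\Sigma''}})=-R_{\Sigma''}e^{\rho_{\Sigma''}}$, so the target reduces to: $\ch L(\lambda)$ is $r_\beta$-invariant if and only if $(\lambda+\rho)(\beta^\vee)>0$. Identifying $L(\lambda)=L(\mu;\Sigma'')$ with $\mu+\rho_{\Sigma''}=\lambda+\rho$ (see~\S\ref{lambdalambda'}), the $r_\beta$-invariance of the character of a simple highest weight module is, by standard Kac--Moody superalgebra integrability, equivalent to $\mu(\beta^\vee)\in\ZZ_{\geq 0}$. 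Since $\rho_{\Sigma''}(\beta^\vee)=1$ or $1/2$ according to whether $\beta\in\Sigma''$ or $\beta/2\in\Sigma''$, combined with the (half-)integrality of $(\lambda+\rho)(\beta^\vee)$ forced by $\beta\in\Pi(\lambda)$, this is precisely $(\lambda+\rho)(\beta^\vee)>0$.

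For part (ii), applying (i) to every $\alpha\in\Pi(\lambda)$ yields $r_\alpha(Re^{\rho}\ch L(\lambda))=-Re^{\rho}\ch L(\lambda)$, and \Prop{eq>0}(ii) identifies $W(\lambda)$ as the Coxeter group generated by these reflections, whence $w(Re^{\rho}\ch L(\lambda))=(-1)^{l(w)}Re^{\rho}\ch L(\lambda)$ for all $w\in W(\lambda)$. By \Rem{typnoncrit} and~\S\ref{equiv}, the support of $Re^{\rho}\ch L(\lambda)$ is contained in $W(\lambda)(\lambda+\rho)$, and by \Prop{eq>0}(iii) the $W(\lambda)$-stabilizer of $\lambda+\rho$ is trivial, allowing an expansion $Re^{\rho}\ch L(\lambda)=\sum_{w\in W(\lambda)}c_w\,e^{w(\lambda+\rho)}$ with distinct exponents. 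From $R\ch M(\lambda)=e^\lambda$ and the decomposition $\ch L(\lambda)=\ch M(\lambda)+\sum_{\mu<\lambda}b_\mu\ch M(\mu)$ (leading coefficient $1$), the coefficient of $e^{\lambda+\rho}$ in $Re^{\rho}\ch L(\lambda)$ equals $1$, so $c_{\id}=1$; antiinvariance then forces $c_w=(-1)^{l(w)}$ by induction on the length of $w$.

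The main technical obstacle is the equivalence, in the reduced setting, between $r_\beta$-invariance of $\ch L(\lambda)$ and the strict positivity $(\lambda+\rho)(\beta^\vee)>0$. The direction ``invariance $\Rightarrow$ positivity'' is immediate from the fact that the highest weight of $L(\mu;\Sigma'')$ has multiplicity one, which forces $r_\beta\mu$ to lie in $\supp L$. The converse rests on the standard integrability theorem for simple highest weight modules over Kac--Moody superalgebras, which must be handled separately in the two cases $\beta\in\Sigma''$ (an $\fsl_2$ subalgebra) and $\beta/2\in\Sigma''$ (an $\mathfrak{osp}(1|2)$ subalgebra giving the half-integer shift).
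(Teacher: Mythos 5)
Your proposal is correct and follows essentially the same route as the paper: reduce $\beta\in\Pi(\lambda)$ to $\beta\in\Pi_{pr}$ via the chain of Enright functors (\Cor{corOnu1}), use the base-independence of $Re^{\rho}\ch L$ for typical $L$ (\Prop{supptyp}) together with \Lem{lemR} and local nilpotence of $\fg_{-\beta}$ for simple $\beta$, and then deduce (ii) from the support containment, the triviality of $\Stab_{W(\lambda)}(\lambda+\rho)$, and the leading coefficient $1$. The only cosmetic difference is that the paper proves the implication "skew-invariance $\Rightarrow$ positivity" directly from $r_{\beta}(\lambda+\rho)\in(\lambda+\rho-Q^{+})\setminus\{\lambda+\rho\}$ before doing the Enright reduction, whereas you transport both directions at once.
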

\begin{proof}
For (i) consider the formula
\begin{equation}\label{R-R}
   r_{\beta}\bigl(Re^{\rho}\ch L(\lambda)\bigr)=-Re^{\rho}\ch
   L(\lambda)\end{equation}

If $\lambda$ satisfies~(\ref{R-R}),
then $r_{\beta}(\lambda+\rho)\in (\lambda+\rho-Q^+)\setminus\{\lambda+\rho\}$, so
$(\lambda+\rho)(\beta^{\vee})>0$.

Now assume that $(\lambda+\rho)(\beta^{\vee})>0$.
By~\Prop{supptyp} $Re^{\rho}\ch L$ does not depend on
$\Sigma$.

For $\beta\in \Pi_{pr}$
  we take $\Sigma$ which contains $\beta$ or $\beta/2$;
in this case $\fg_{-\beta}$ acts nilpotently on the singular vector
of $L(\lambda)$ and thus locally
nilpotently on $L(\lambda)$ (see~\cite{Kbook2}, Lem. 3.4). This gives~(\ref{R-R}).

Now consider any $\beta\in\Pi(\lambda)$.
Taking $w$ as in~\Cor{corOnu1} we obtain
$$r_\beta \left(Re^{\rho}\ch L(\lambda)\right)=
r_\beta w^{-1}\bigl( Re^{\rho}\ch L(w.\lambda) \bigr)=w^{-1}r_{w\beta} \bigl( Re^{\rho}\ch L(w.\lambda) \bigr).$$
Now~(\ref{R-R}) for $\beta$ follows from~(\ref{R-R}) for $w\beta$
(since $w\beta\in\Pi_{pr}$). This establishes (i).

For (ii) recall that
$$Re^{\rho}\ch L(\lambda)=
\sum_{w\in {W}(\alpha)} a_w e^{w(\lambda+\rho)},$$
by~(\ref{suppN}).
By~\Prop{eq>0} (iii) the assumption on $\lambda$ gives
  $\Stab_{{W}(\lambda)} (\lambda+\rho)=Id$.
By~\S~\ref{Deltalambda}, ${W}(\lambda)$ is generated by
 $r_{\alpha}$ with  $\alpha\in\Pi(\lambda)$. Using~(\ref{R-R}) we
 obtain (ii).
 \end{proof}

\begin{rem}{}
\Prop{cortyp} (ii) was proven in~\cite{GK}, Thm. 11.1.2.
\end{rem}

\subsection{}\label{sigmapr} Let us fix $\Sigma\in\cB$. Let
$$\Sigma_{pr}:=\{\alpha\in \Pi_{pr}\,|\,\alpha\
\text{or}\ \alpha/2\in\Sigma\}$$
and let $W_\Sigma$ be the subgroup
of $W$ generated by $r_{\alpha}$ for all $\alpha\in \Sigma_{pr}$. Clearly, $W_{\Sigma}\Sigma_{pr}$ is a root subsystem and
$\Delta(\lambda)\cap W_{\Sigma}\Sigma_{pr}$ is a root subsystem for any weight $\lambda$. We denote by $\Sigma(\lambda)$  the base of
$\Delta(\lambda)\cap W_{\Sigma}\Sigma_{pr}$  in the sense
of~\ref{Deltalambda}.

\subsubsection{}
\begin{thm}{thmchN}
Let $\lambda\in\fh^*$ and $\beta\in \Sigma(\lambda)$
be such that $(\lambda+\rho)(\beta^{\vee})>0$.

(i) One has $\dim \Hom_{\fg}(M(r_{\beta}.\lambda), M(\lambda))=1$
and $M(r_{\beta}.\lambda)$ is a submodule of $M(\lambda)$.

(ii)  For any subquotient $N$
of $M(\lambda)/M(r_{\beta}.\lambda)$ one has
$r_{\beta}\bigr(Re^{\rho} ch N)=-Re^{\rho} ch N$.

(iii) $[M(\lambda)/M(r_{\beta}.\lambda):L(\nu)]\not=0$ implies
$(\nu+\rho,\beta)>0$.
\end{thm}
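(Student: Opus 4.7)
The plan is to reduce all three assertions to the case $\beta\in\Sigma$ or $\beta/2\in\Sigma$ via an Enright chain, handle the reduced case by explicit analysis of the associated $\fsl_2(\beta)$ or $\mathfrak{osp}(1|2)$, and deduce (iii) from (ii). The $W_\Sigma$-analogue of \Lem{lemwmin}(iii) (whose proof goes through verbatim inside $W_\Sigma\Sigma_{pr}$, using \Lem{lemPilambda}(iii) and \Lem{lemmaxinorb} restricted to $W_\Sigma$) provides $w=r_{\alpha_s}\cdots r_{\alpha_1}\in W_\Sigma$ with $\alpha_i\in\Sigma_{pr}$ such that $\lambda$ is $(\alpha_1,\ldots,\alpha_s)$-friendly and $w\beta\in\Sigma_{pr}$. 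Applying the Enright chain $\cC:=\cC^{\alpha_1,\ldots,\alpha_s}$, \Cor{corOnu} yields an equivalence $\CO^{fin}_{\Delta(\lambda)}\iso\CO^{fin}_{w\Delta(\lambda)}$ sending $M(\lambda)\mapsto M(w.\lambda)$ and $M(r_\beta.\lambda)\mapsto M(r_{w\beta}.(w.\lambda))$, and the hypothesis is preserved since $(w.\lambda+\rho)((w\beta)^\vee)=(\lambda+\rho)(\beta^\vee)>0$. Hence we may assume $\beta\in\Sigma$ or $\beta/2\in\Sigma$.

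In the reduced case set $m:=(\lambda+\rho)(\beta^\vee)$. If $\beta\in\Sigma$, then $m\in\mathbb{Z}_{>0}$ and $(\lambda,r_\beta.\lambda)\in\cK$ with $\alpha=\beta$; if $\beta/2\in\Sigma$, set $\gamma:=\beta/2$, so $m':=(\lambda+\rho)(\gamma^\vee)=2m$ is an odd positive integer, $\gamma\in\bar{\Delta}^+$ is odd non-isotropic, and $(\lambda,\lambda-m'\gamma)=(\lambda,r_\beta.\lambda)\in\cK$ with $\alpha=\gamma$. In either case~\eqref{KKthm} supplies a nonzero, hence injective, $M(r_\beta.\lambda)\hookrightarrow M(\lambda)$, whose image is generated by the singular vector from \Lem{lemtwVerma} (namely $f_\beta^m v_\lambda$ in the $\fsl_2$-case, resp.\ $f_\beta^{(m'-1)/2}F_\gamma v_\lambda$ in the $\mathfrak{osp}(1|2)$-case using $F_\gamma^2=-f_\beta$); uniqueness of this singular vector up to scalar---hence $\dim\Hom_\fg(M(r_\beta.\lambda),M(\lambda))=1$---follows from the Kac--Kazhdan/DGK analysis of first-level embeddings in Verma modules, adapted to the super setting where the relevant $\fsl_2$ or $\mathfrak{osp}(1|2)$ is principal. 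For (ii), the vanishing of this singular vector in $M(\lambda)/M(r_\beta.\lambda)$ combined with local nilpotency of $\ad f_\beta$ on $U(\fg)$ (coming from the local finiteness of $\ad\fsl_2(\beta)$ on $\fg$) forces $f_\beta$, and via $F_\gamma^2=-f_\beta$ also $F_\gamma$, to act locally nilpotently on the quotient; together with automatic local nilpotency of $e_\beta$ on any object of $\CO$, this means $\fsl_2(\beta)$ (resp.\ $\mathfrak{osp}(1|2)$) acts locally finitely on every subquotient $N$. The $\fh$-character of such a locally finite module is $r_\beta$-invariant, so combined with $r_\beta(Re^\rho)=-Re^\rho$ from \Lem{lemR} we obtain $r_\beta(Re^\rho\ch N)=-Re^\rho\ch N$. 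Transporting this identity back through $\cC$ by means of the character formula $Re^\rho\ch\cC(N)=w(Re^\rho\ch N)$ from \Cor{corOnu}(ii) extends (i) and (ii) to the original $(\lambda,\beta)$.

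For (iii), apply (ii) to $N:=L(\nu)$, a simple subquotient of $M(\lambda)/M(r_\beta.\lambda)$. The expansion $Re^\rho\ch L(\nu)=e^{\nu+\rho}+\sum_{\mu<\nu+\rho}c_\mu e^{\mu}$ has unique maximal exponent $\nu+\rho$ with coefficient $+1$; the identity $r_\beta(Re^\rho\ch L(\nu))=-Re^\rho\ch L(\nu)$ rules out both $r_\beta(\nu+\rho)=\nu+\rho$ (which would demand $+1=-1$ as the leading coefficient) and $r_\beta(\nu+\rho)>\nu+\rho$ (which would yield a term above the support of the right-hand side), forcing $r_\beta(\nu+\rho)<\nu+\rho$. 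Since $\beta\in\Delta^+$ is a non-isotropic real root of a symmetrizable algebra with $\beta^\vee=2\beta/(\beta|\beta)$ and $(\beta|\beta)>0$, the inequality $(\nu+\rho)(\beta^\vee)>0$ is equivalent to $(\nu+\rho,\beta)>0$. The main obstacle in this plan is the uniqueness part of (i): even in the reduced case, rigorously excluding extra singular vectors of weight $r_\beta.\lambda$ in $M(\lambda)$ requires importing the Kac--Kazhdan/DGK argument to the superalgebra setting and controlling potential interactions of $\beta$ with odd isotropic simple roots of $\Sigma$ lying outside $\Sigma(\lambda)$.
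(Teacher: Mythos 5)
Your proposal follows essentially the same route as the paper: reduce to the case $\beta\in\Sigma$ or $\beta/2\in\Sigma$ by an Enright chain $\cC^{\alpha_1,\ldots,\alpha_s}$ with $\alpha_i\in\Sigma_{pr}$ (the paper invokes \Cor{corOnu1} for the Kac--Moody algebra of $\Sigma_{pr}$, which is exactly your $W_\Sigma$-analogue of \Lem{lemwmin}), settle the reduced case by rank-one $\fsl_2$/$\mathfrak{osp}(1|2)$ analysis together with local nilpotency of $\fg_{-\beta}$ and \Lem{lemR}, and transport back via the equivalence and the character formula of \Cor{corOnu}. The one divergence is harmless: you derive (iii) from (ii) by a leading-coefficient argument, while the paper reads it off directly from the local nilpotency of $f_\beta$ on the quotient.

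Your flagged ``main obstacle'' (uniqueness in (i)) is not an obstacle and needs no Kac--Kazhdan/DGK input: once $\beta$ (resp.\ $\gamma=\beta/2$) is a \emph{simple} root, the entire weight space $M(\lambda)_{r_\beta.\lambda}=M(\lambda)_{\lambda-m\beta}$ is one-dimensional, since a PBW monomial in $U(\fn^-)$ of weight $-m\beta$ (resp.\ $-m'\gamma$) can only involve root vectors for $-\beta$ (resp.\ $-\gamma,-2\gamma$), and $F_\gamma^2$ is proportional to $f_\beta$. Hence $\dim\Hom_{\fg}(M(r_\beta.\lambda),M(\lambda))\leq\dim M(\lambda)_{r_\beta.\lambda}^{\fn^+}\leq 1$, and the explicit singular vector gives equality; interactions with isotropic simple roots outside $\Sigma(\lambda)$ never enter.
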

\begin{proof}
Consider the case $\beta\in\Sigma_{pr}$.
Then (i) follows from the corresponding assertions
for $\mathfrak{sl}_2,\mathfrak{osp}(1|2)$. By~\cite{Kbook2}, Lem. 3.4,
 $\fg_{-\beta}$ acts locally nilpotently
on $M(\lambda)/M(r_{\beta}.\lambda)$ and thus on any subquotient $N$
of  $M(\lambda)/M(r_{\beta}.\lambda)$; this gives (iii) and
 $\ch N=r_{\beta}\ch N$.
By~\Lem{lemR}, $Re^{\rho}=-r_{\beta}(Re^{\rho})$, which implies (ii).

In the general case $\beta\in \Sigma(\lambda)$  using~\Cor{corOnu1}
for the Kac-Moody algebra corresponding to
$\Sigma_{pr}$ we obtain $w\in W_\Sigma$ such that $w\beta\in \Sigma_{pr}$ and $\lambda$ is $w$-friendly.
~\Cor{corOnu} implies
$$\begin{array}{l}
\cC(M(\lambda))=M(w.\lambda),\ \
\cC(M(r_{\beta}.\lambda))=M(wr_{\beta}.\lambda)=M(r_{w\beta}(w.\lambda)),\\
(w.\lambda+\rho, (w\beta)^{\vee})=(\lambda+\rho,\beta^{\vee})\end{array}$$
and $w\beta\in \Sigma(w.\lambda)\cap \Sigma$.
Since
  $w\beta\in \Sigma$, the assertions (i)---(iii) hold for
 the pair  $(w.\lambda,w\beta)$. Since $\cC$ is an equivalence of categories
(i)---(iii) holds for the pair $(\lambda,\beta)$.
 \end{proof}

\subsubsection{}
\begin{cor}{corchN}
For $\lambda\in\fh^*$ and  $\beta\in \Sigma(\lambda)$ one has
$$r_{\beta}\bigl(Re^{\rho}\ch L(\lambda)\bigr)=-Re^{\rho}\ch L(\lambda)\
\Longleftrightarrow\
(\lambda+\rho)(\beta^{\vee})>0.$$
\end{cor}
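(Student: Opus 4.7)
The plan is to deduce both implications directly from Theorem~\ref{thmchN}, by reading off coefficients of characters.

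For the implication $(\Leftarrow)$, I would assume $(\lambda+\rho)(\beta^{\vee})>0$. Theorem~\ref{thmchN}(i) then produces an embedding $M(r_{\beta}.\lambda)\hookrightarrow M(\lambda)$, and this embedding is proper because $r_{\beta}.\lambda=\lambda-(\lambda+\rho)(\beta^{\vee})\beta\neq\lambda$. Since $M(\lambda)$ has a unique maximal proper submodule, the proper submodule $M(r_{\beta}.\lambda)$ is contained in it, so $L(\lambda)$ is a quotient---and hence a subquotient---of $M(\lambda)/M(r_{\beta}.\lambda)$. Applying Theorem~\ref{thmchN}(ii) to the subquotient $L(\lambda)$ immediately yields $r_{\beta}(Re^{\rho}\ch L(\lambda))=-Re^{\rho}\ch L(\lambda)$.

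For the implication $(\Rightarrow)$, I would compare the coefficients of $e^{\lambda+\rho}$ on both sides of the assumed identity. By~\S\ref{ODelta} one can write $Re^{\rho}\ch L(\lambda)=\sum_{\mu}a_{\mu}e^{\mu+\rho}$ with $a_{\lambda}=1$ and $a_{\mu}\neq 0$ only when $\lambda-\mu\in Q^{+}$. The coefficient of $e^{\lambda+\rho}$ on the left equals $a_{r_{\beta}(\lambda+\rho)-\rho}$, and the hypothesis forces this to equal $-1$. If $(\lambda+\rho)(\beta^{\vee})=0$, then $r_{\beta}(\lambda+\rho)-\rho=\lambda$, giving $a_{\lambda}=-1$, in contradiction with $a_{\lambda}=1$. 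If $(\lambda+\rho)(\beta^{\vee})<0$, then $\lambda-(r_{\beta}(\lambda+\rho)-\rho)=(\lambda+\rho)(\beta^{\vee})\beta$ is a strictly negative scalar multiple of the positive root $\beta$ and therefore does not lie in $Q^{+}$ (by linear independence of $\Sigma$), so $a_{r_{\beta}(\lambda+\rho)-\rho}=0\neq -1$, again a contradiction. The remaining case $(\lambda+\rho)(\beta^{\vee})>0$ is thus forced.

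The only delicate bookkeeping is verifying that $L(\lambda)$ is genuinely a subquotient of $M(\lambda)/M(r_{\beta}.\lambda)$ in the $(\Leftarrow)$ direction; this reduces to the inequality $r_{\beta}.\lambda\neq\lambda$, which is immediate from $(\lambda+\rho)(\beta^{\vee})>0$. Beyond this, no substantive obstacle is anticipated: the corollary is essentially a character-level repackaging of Theorem~\ref{thmchN}(ii) combined with a standard leading-term analysis for $Re^{\rho}\ch L(\lambda)$.
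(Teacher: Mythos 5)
Your proposal is correct and follows essentially the same route as the paper: the implication $\Longleftarrow$ is read off from Theorem~\ref{thmchN} (after noting $L(\lambda)$ is a quotient of $M(\lambda)/M(r_{\beta}.\lambda)$), and the implication $\Longrightarrow$ comes from examining the coefficient of $e^{\lambda+\rho}$, which is exactly the paper's observation that $\lambda+\rho$ lies in $\supp(Re^{\rho}\ch L(\lambda))\subset\lambda+\rho-Q^{+}$ and hence forces $a=(\lambda+\rho)(\beta^{\vee})\neq 0$ with $a\beta\in Q^{+}$. Your variant even avoids the paper's small detour through the integrality of $a$ by invoking linear independence of $\Sigma$ directly, but this is a cosmetic difference.
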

\begin{proof}
The implication $\ \Longleftarrow\ $ follows from~\Thm{thmchN} above.

Now assume that
$r_{\beta}\bigl(Re^{\rho}\ch L(\lambda)\bigr)=-Re^{\rho}\ch L(\lambda)$.
One has $r_{\beta}(\lambda+\rho)=\lambda+\rho-a\beta$, where
$a:=(\lambda+\rho)(\beta^{\vee})$.
Since
$$(\lambda+\rho)\in  \supp \bigl(Re^{\rho}\ch L(\lambda)\bigr)\subset
(\lambda+\rho-Q^+)$$
one has $a\not=0$ and $a\beta\in Q^+$.
We have $w\beta\in\Sigma$ for some $w\in W$, which gives
$a(w\beta)\in wQ^+\subset\mathbb{Z}\Sigma$, so $a\in\mathbb{Z}$.
If $a<0$, then $a\beta\in -Q^+$.
Hence $a>0$ as required.
\end{proof}

\section{Snowflake modules for non-isotropic type}
In this section $\fg$ is a symmetrizable Kac--Moody superalgebra of
non-isotropic type (i.e., $\fg$ does not have isotropic simple roots).
{Many results of this section (\Prop{propsnow}, \Cor{corsnow}, \Thm{thmcomred} (ii) and~\Cor{snowcor})
were proven, under
a certian additional assumption,   in~\cite{KW-1} using translation functors.}
By~\Lem{lemR},
 $w(Re^{\rho})=(-1)^{l(w)} Re^{\rho}$ for $w\in W$.
{Recall that $\CO_{\Delta'}$ is the full category of $\CO$ with $\Delta(N)=\Delta'$,
see~\ref{ODelta}.}

\subsection{}
\begin{defn}{defnsnowflake}
 We call  a $\fg$-module $N\in \CO$
 {\em snowflake } if $N\in \CO_{\Delta'}$ for some $\Delta'$ and
$$\forall\alpha\in\Delta'\ \  \ \ \ \ r_{\alpha}\bigl(Re^{\rho}\ch N\bigr)=-Re^{\rho}\ch N.$$
\end{defn}

\subsubsection{Remarks}\label{remsnowflake}
Since the group $W(N)$ is generated by $r_{\alpha}$ with
 $\alpha\in \Delta(N)$, the above formula can be rewritten as
$w\bigl(Re^{\rho}\ch N\bigr)=(-1)^{l(w)}Re^{\rho}\ch N$ for $w\in W(N)$
(we call this property $W(N)$-skew-invariance); by~\Lem{lemPilambda}
this property is equivalent to
$r_{\alpha}(Re^{\rho}\ch L(\lambda))=-Re^{\rho}\ch L(\lambda)$
for every $\alpha\in\Pi(N)$.

\subsection{}
\begin{thm}{thmsnow}
The following conditions are equivalent

\hspace{1cm} $L(\lambda)$ is snowflake;

  \hspace{1cm} $(\lambda+\rho)(\alpha^{\vee})>0$ for each $\alpha\in\Pi(\lambda)$.
\end{thm}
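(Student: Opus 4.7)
The plan is to deduce this theorem almost directly from Corollary~\ref{corchN}, after unpacking the snowflake condition via Remark~\ref{remsnowflake} and observing that in the non-isotropic setting the bases $\Pi(\lambda)$ and $\Sigma(\lambda)$ coincide.

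First I would unwind the definition. By~\Defn{defnsnowflake}, $L(\lambda)$ is snowflake precisely when $r_{\alpha}(Re^{\rho}\ch L(\lambda)) = -Re^{\rho}\ch L(\lambda)$ for every $\alpha\in\Delta(\lambda)$. Since $W(\lambda)$ is generated by the $r_{\alpha}$ with $\alpha\in\Pi(\lambda)$ (by~\Lem{lemPilambda}) and $\Delta(\lambda) = W(\lambda)\Pi(\lambda)$, this condition is equivalent, using the relation $r_{w\beta} = w r_{\beta} w^{-1}$, to the same skew-invariance statement restricted to $\alpha \in \Pi(\lambda)$; this is the content of Remark~\ref{remsnowflake}.

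Next I would verify that $\Pi(\lambda) = \Sigma(\lambda)$ in non-isotropic type. Since $\fg$ is of non-isotropic type, we have $\cB = \{\Sigma\}$, so there are no odd reflections to perform. As reviewed in~\S\ref{non-isotropictype}, $\fg_{pr} = \fg(B)$ and each $\alpha\in\Pi_{pr}$ is either an even element of $\Sigma$ or is of the form $2\alpha_i$ with $\alpha_i\in\Sigma$ odd non-isotropic. Consequently $\Sigma_{pr}$ (as defined in~\S\ref{sigmapr}) equals $\Pi_{pr}$, so $W_{\Sigma} = W$, and $\Delta(\lambda)\cap W_{\Sigma}\Sigma_{pr} = \Delta(\lambda)$. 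Therefore $\Sigma(\lambda) = \Pi(\lambda)$.

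Finally, with this identification, \Cor{corchN} states exactly that for each $\beta\in\Pi(\lambda)=\Sigma(\lambda)$ we have
\[
r_{\beta}\bigl(Re^{\rho}\ch L(\lambda)\bigr) = -Re^{\rho}\ch L(\lambda)\ \Longleftrightarrow\ (\lambda+\rho)(\beta^{\vee})>0.
\]
Combining this with the first step yields the desired equivalence: the snowflake condition holds iff the skew-invariance under each $r_{\beta}$ with $\beta\in\Pi(\lambda)$ holds iff $(\lambda+\rho)(\beta^{\vee})>0$ for every $\beta\in\Pi(\lambda)$. There is no real obstacle here; the substantive work was carried out in~\Thm{thmchN} and~\Cor{corchN}, and the only point requiring a brief justification is the identification $\Sigma(\lambda)=\Pi(\lambda)$ specific to the non-isotropic case.
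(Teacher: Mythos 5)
Your proposal is correct and follows essentially the same route as the paper, which simply declares the theorem ``an immediate consequence of Corollary~\ref{corchN}''; the two points you make explicit (the reduction from $\Delta(\lambda)$ to $\Pi(\lambda)$ via Remark~\ref{remsnowflake}, and the identification $\Sigma(\lambda)=\Pi(\lambda)$ coming from $\cB=\{\Sigma\}$ and $\Sigma_{pr}=\Pi_{pr}$ in non-isotropic type) are exactly the implicit steps the paper is relying on. No gaps.
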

\begin{proof} An immediate consequence of ~\Cor{corchN}.
\end{proof}

\subsubsection{}
\begin{prop}{propsnow} If $L(\lambda)$ is non-critical and
  snowflake then
  $$Re^{\rho}\ch L(\lambda)=\sum_{w\in W(\lambda)} (-1)^{l(w)} e^{w(\lambda+\rho)}.$$
  \end{prop}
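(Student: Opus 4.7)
The plan is to derive this from Proposition~\ref{cortyp}(ii), which already establishes exactly the stated character formula in the typical case under the dominance condition $(\lambda+\rho)(\alpha^\vee)>0$ for every $\alpha\in\Pi(\lambda)$. What must be checked is that, in the non-isotropic type setting, the snowflake hypothesis together with non-criticality recovers all the input required by~\Prop{cortyp}(ii).

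The dominance condition is supplied by~\Thm{thmsnow}, which asserts that $L(\lambda)$ is snowflake precisely when $(\lambda+\rho)(\alpha^\vee)>0$ for each $\alpha\in\Pi(\lambda)$. Non-criticality of $\lambda+\rho$ is assumed directly, so only the typicality hypothesis of~\Prop{cortyp}(ii) remains to be verified.

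It is here that non-isotropic type is used. Since $\fg$ has non-isotropic type, we have $\cB=\{\Sigma\}$ and every simple root $\alpha\in\Sigma$ satisfies $a_{\alpha\alpha}=2$; by the normalization of the symmetrizable invariant form (see~\S\ref{symnoniso}), $(\alpha|\alpha)\in\mathbb{Q}_{>0}$. Since $\ol\Delta_{re}=W\Sigma$ and $W\Pi_{pr}$ consists of $W$-translates of (non-isotropic) principal roots, and $W$ preserves the form, no real root of $\fg$ is isotropic. Consequently, the typicality condition on $\lambda+\rho$ — requiring $(\lambda+\rho)(\alpha^\vee)\neq 0$ for every isotropic $\alpha\in\Delta_{re}$ — is vacuously satisfied, so every weight is typical in this setting.

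All three hypotheses of~\Prop{cortyp}(ii) now hold, and the asserted formula follows. The only conceptual point in the argument is the observation that typicality is automatic in non-isotropic type; once this is noted there is no further obstacle, since the atypical case simply does not arise.
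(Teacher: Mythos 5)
Your proof is correct. The key observation --- that in non-isotropic type $\cB=\{\Sigma\}$, every real root is non-isotropic (since $a_{ii}=2$ for all $i$, $\ol{\Delta}_{re}=W\Sigma$, $W\Pi_{pr}$ consists of even roots $\alpha$ with $\alpha$ or $\alpha/2$ simple, and $W$ preserves $(-|-)$), so that typicality of $\lambda+\rho$ is vacuous --- is accurate, and together with \Thm{thmsnow} it does place the statement squarely inside the hypotheses of \Prop{cortyp}(ii). The paper does not take this shortcut: it reruns the underlying three-step argument directly in the snowflake setting, namely (a) $\supp(Re^{\rho}\ch L(\lambda))\subset W(\lambda)(\lambda+\rho)$ from the linkage/block description of~\S\ref{equiv}, (b) triviality of $\Stab_{W(\lambda)}(\lambda+\rho)$ from the dominance condition via \Lem{lemmaxinorb} and the Coxeter structure of $W(\lambda)$, and (c) the sign alternation $a_w=-a_{r_\alpha w}$ from \Cor{corchN}, which holds without any typicality hypothesis. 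The two routes are essentially the same computation, packaged differently: yours is shorter and exploits that the typical/atypical dichotomy degenerates in non-isotropic type, while the paper's version keeps Section~4 independent of the typicality machinery of \Prop{supptyp} and \Prop{cortyp}(i). Either is acceptable; just make sure you cite that the ambient standing assumption of the section (symmetrizable, non-isotropic type) is what makes \Prop{cortyp} applicable, as you did.
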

\begin{proof} Since $L(\lambda)$ is snowflake the condition (ii) of
  Theorem \ref{thmsnow} holds. By~\S~\ref{equiv},
$$Re^{\rho}\ch L(\lambda)=\sum_{w\in W(\lambda)} a_w e^{w(\lambda+\rho)}.$$

Since $W(\lambda)$ is  the
Coxeter group corresponding to $\Pi(\lambda)$,
\Lem{lemmaxinorb} implies that $Stab_{W(\lambda)} (\lambda+\rho)=Id$. Therefore
the coefficient of $e^{w(\lambda+\rho)}$ in $Re^{\rho}\ch L(\lambda)$ is equal to $a_w$. In particular, $a_{Id}=1$ and,  by~\Cor{corchN}, $a_w=-a_{r_{\alpha}w}$ if $\alpha\in\Pi(\lambda)$.
The statement is proven.
  \end{proof}

\subsubsection{Remark}
\Prop{propsnow} for the Lie algebra case
is a particular case of~\cite{KT98}; for affine Lie superalgebras
this is Thm. 11.1.2 in~\cite{GK}.

\subsection{}
\begin{cor}{corsnow}
Let  $L(\lambda)$ be a non-critical snowflake module. For each $\beta\in\Pi(\lambda)$
the weight space $M(\lambda)_{r_{\beta}.\lambda}^{\fn^+}$ is one-dimensional
and the maximal submodule of $M(\lambda)$ is generated by
the singular vectors of weights $r_{\beta}.\lambda$ with $\beta \in\Pi(\lambda)$.
\end{cor}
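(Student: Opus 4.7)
The plan is to invoke \Thm{thmchN} for every $\beta\in\Pi(\lambda)$. In non-isotropic type we have $\cB=\{\Sigma\}$, so $\Sigma_{pr}=\Pi_{pr}$, $W_{\Sigma}=W$, and hence $\Sigma(\lambda)=\Pi(\lambda)$; the snowflake hypothesis combined with \Thm{thmsnow} supplies $(\lambda+\rho)(\beta^{\vee})>0$ for every $\beta\in\Pi(\lambda)$, so the hypotheses of \Thm{thmchN} are met.

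For the first assertion, \Thm{thmchN}(i) gives $\dim\Hom_{\fg}(M(r_{\beta}.\lambda),M(\lambda))=1$. A homomorphism from a Verma module is determined by the image of its highest weight vector (an $\fn^+$-invariant vector of the appropriate weight), so $\dim M(\lambda)^{\fn^+}_{r_{\beta}.\lambda}=1$. The same part of \Thm{thmchN} embeds each $M(r_{\beta}.\lambda)$ into $N(\lambda)$, proving (i).

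For the second assertion, set $N':=\sum_{\beta\in\Pi(\lambda)}M(r_{\beta}.\lambda)\subset N(\lambda)$ and write $X:=M(\lambda)/N'$, $X':=N(\lambda)/N'\subset X$; it suffices to show $X'=0$. Since $N'\supset M(r_{\beta}.\lambda)$ for each $\beta\in\Pi(\lambda)$, $X$ is a quotient of $M(\lambda)/M(r_{\beta}.\lambda)$, and \Thm{thmchN}(iii) forces every composition factor $L(\nu)$ of $X$ (hence of $X'$) to satisfy $(\nu+\rho)(\beta^{\vee})>0$ for all $\beta\in\Pi(\lambda)$. Suppose for contradiction $X'\neq 0$ and pick such an $L(\nu)$. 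Because $\fg$ has non-isotropic type there are no isotropic real roots, so $\lambda$ is automatically typical; \Rem{typnoncrit}, combined with the identification between $\cK$-pairs and the twisted $W(\lambda)$-action, guarantees $\nu+\rho\in W(\lambda)(\lambda+\rho)$. Since $\nu-\lambda\in\mathbb{Z}\Sigma$, one has $\Pi(\nu)=\Pi(\lambda)$, so \Prop{eq>0}(iii) applied to $\nu$ (using the strict positivity derived above) shows $\nu+\rho$ is the unique maximum of $W(\lambda)(\nu+\rho)=W(\lambda)(\lambda+\rho)$; the same proposition applied to $\lambda$ shows $\lambda+\rho$ is this unique maximum. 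Hence $\nu=\lambda$, contradicting $[N(\lambda):L(\lambda)]=0$ (which holds since the generator of $M(\lambda)$ has weight $\lambda$ and lies outside $N(\lambda)$).

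The main technical point is translating \Rem{typnoncrit}, which describes the block of $L(\lambda)$ via the twisted action on $\lambda$, into the clean statement that $\nu+\rho$ lies in the standard $W(\lambda)$-orbit of $\lambda+\rho$; once this and automatic typicality in non-isotropic type are in hand, the conclusion is a direct combination of \Thm{thmchN} with the Coxeter-group uniqueness in \Prop{eq>0}(iii).
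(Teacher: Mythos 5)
Your proof is correct. The first half — one-dimensionality of $M(\lambda)^{\fn^+}_{r_{\beta}.\lambda}$ via \Thm{thmchN}(i), after noting that in non-isotropic type $\Sigma(\lambda)=\Pi(\lambda)$ and that \Thm{thmsnow} supplies the positivity hypothesis — is exactly what the paper does. In the second half you take a genuinely different route. The paper applies \Thm{thmchN}(ii) to conclude that $M(\lambda)/I$ is again snowflake, writes $Re^{\rho}\ch (M(\lambda)/I)=\sum_{w\in W(\lambda)}a_w e^{w(\lambda+\rho)}$, and uses skew-invariance together with $a_{Id}=1$ and \Prop{propsnow} to identify this with $Re^{\rho}\ch L(\lambda)$. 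You instead use \Thm{thmchN}(iii) to force every composition factor $L(\nu)$ of $M(\lambda)/I$ to satisfy $(\nu+\rho)(\beta^{\vee})>0$ for all $\beta\in\Pi(\lambda)=\Pi(\nu)$, place $\nu+\rho$ in the orbit $W(\lambda)(\lambda+\rho)$ via the block description, and then use \Prop{eq>0}(iii) (really the strict form of \Lem{lemmaxinorb}, since $(\nu+\rho)(\beta^{\vee})\in\mathbb{Z}_{>0}$) to conclude that $\nu+\rho$ and $\lambda+\rho$ are each the unique maximum of that orbit, hence $\nu=\lambda$; as $L(\lambda)$ cannot occur in $I(\lambda)/N'$, that quotient vanishes. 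Both arguments rest on \Thm{thmchN} and on the fact that the block of a typical non-critical $L(\lambda)$ is a single twisted $W(\lambda)$-orbit; yours trades the explicit character identity of \Prop{propsnow} for the composition-factor statement \Thm{thmchN}(iii), which is slightly more economical, at the cost of two small observations you correctly supply: typicality is vacuous in non-isotropic type, and $(\nu+\rho|\beta)>0$ is equivalent to $(\nu+\rho)(\beta^{\vee})>0$ because $(\beta|\beta)>0$ for the real roots occurring here.
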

\begin{proof}
Combining~\Thm{thmsnow}  and~\Thm{thmchN} we conclude that
 $\dim M(\lambda)_{r_{\beta}.\lambda}^{\fn^+}=1$ for each $\beta\in\Pi(\lambda)$.
 Let $I$ be the submodule generated by
the singular vectors of weights $r_{\beta}.\lambda$ with $\beta \in\Pi(\lambda)$.
 By~\Thm{thmchN} (ii) $M(\lambda)/I$ is a snowflake module, that is
  $$Re^{\rho}\ch (M(\lambda)/I)=(-1)^{l(w)}
Re^{\rho}\ch (M(\lambda)/I)$$
for every $w\in W(\lambda)$.   By~\S~\ref{equiv},
$$Re^{\rho}\ch (M(\lambda)/I)=
\sum_{w\in W(\lambda)} a_w e^{w(\lambda+\rho)}.$$
Since $\dim (M(\lambda)/I)_{\lambda}=1$ one has $a_{Id}=1$
and thus $a_{w}=(-1)^{l(w)}$. Hence by Proposition \ref{propsnow} $\ch (M(\lambda)/I)=\ch L(\lambda)$, that is
$M(\lambda)/I=L(\lambda)$ as required.
 \end{proof}

\subsection{}
\begin{thm}{thmcomred}
(i) A module $N\in\CO_{\Delta'}$ is snowflake if and only if  every simple
subquotient of $N$
  is also snowflake. In other words
the {full} subcategory of $\CO_{\Delta'}$ consisting of
  snowflake modules is a Serre subcategory.

(ii) Any non-critical snowflake module is completely reducible.
  \end{thm}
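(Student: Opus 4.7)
The ``if'' direction of (i) is immediate from the additivity of characters in the DGK local composition sense (\S~\ref{ODelta}): writing $Re^{\rho}\ch N=\sum_{\lambda}[N:L(\lambda)]\,Re^{\rho}\ch L(\lambda)$ and applying the linear operator $r_\alpha$ termwise shows that $W(\Delta')$-skew-invariance of each $Re^\rho\ch L(\lambda)$ propagates to $N$. Everything therefore comes down to the ``only if'' direction.

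For the ``only if'' direction my plan is a maximal-weight contradiction. Suppose $N\in\CO_{\Delta'}$ is snowflake but the set $T:=\{\lambda : [N:L(\lambda)]\neq 0,\ L(\lambda)\text{ not snowflake}\}$ is nonempty. Since $\supp N$ lies in a finite union of cones $\lambda_i-Q^+$, the intersection $T\cap(\lambda+Q^+)$ is finite for every $\lambda\in T$, so $T$ has a maximal element $\lambda^*$. Form the formal residue
\[R_*\ :=\ Re^{\rho}\ch N-\sum_{\mu>\lambda^*,\ [N:L(\mu)]\neq 0}[N:L(\mu)]\,Re^{\rho}\ch L(\mu),\]
which is a finite alteration of $Re^\rho\ch N$ (the sum has finitely many terms by the cone argument) and is $W(\Delta')$-skew-invariant by maximality of $\lambda^*$ (every subtracted $L(\mu)$ is snowflake). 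The triangular form $Re^{\rho}\ch L(\mu)=e^{\mu+\rho}+(\text{lower terms})$ then forces the coefficient of $e^{\lambda^*+\rho}$ in $R_*$ to be $[N:L(\lambda^*)]\neq 0$. Since $L(\lambda^*)$ is not snowflake, by~\Thm{thmsnow} some $\beta\in\Pi(\Delta')$ satisfies $(\lambda^*+\rho)(\beta^\vee)\leq 0$. If this pairing is zero, $r_\beta$ fixes $e^{\lambda^*+\rho}$ and skew-invariance yields $[N:L(\lambda^*)]=-[N:L(\lambda^*)]$, absurd. If it is negative, then $r_\beta(\lambda^*+\rho)-\rho>\lambda^*$ lies strictly above every weight still contributing to $R_*$, so the coefficient of $e^{r_\beta(\lambda^*+\rho)}$ in $R_*$ must vanish, contradicting that skew-invariance prescribes it to be $-[N:L(\lambda^*)]\neq 0$. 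Hence $T=\emptyset$.

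Part (ii) will then follow by combining (i) with Remark~\ref{typnoncrit}. By (i), every simple subquotient $L(\mu)$ of a non-critical snowflake module $N$ is snowflake, so by~\Thm{thmsnow} one has $(\mu+\rho)(\alpha^\vee)>0$ for every $\alpha\in\Pi(\mu)$. Because $\fg$ has non-isotropic type, all simple roots satisfy $(\alpha_i|\alpha_i)\neq 0$, so every real root is non-isotropic and every weight is automatically typical; non-criticality is inherited from $N$ by hypothesis. Each $\mu+\rho$ therefore lies in the set $X$ of Remark~\ref{typnoncrit}, and the argument of Thm.~5.5 of~\cite{AKMPP} recalled there delivers complete reducibility of $N$.

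The main obstacle is the bookkeeping in (i): since modules in $\CO$ need not have finite length, one must justify rigorously that $R_*$ is a legitimate formal sum whose coefficients at $e^{\lambda^*+\rho}$ and at $e^{r_\beta(\lambda^*+\rho)}$ are computable finite sums. The DGK cone containment of $\supp N$ is precisely the input that makes both the finiteness of the terms $\mu>\lambda^*$ with $[N:L(\mu)]\neq 0$ and the vanishing of the higher coefficient argument rigorous.
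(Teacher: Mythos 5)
Your proof is correct and follows essentially the same route as the paper's: both directions of (i) rest on skew-invariance of $Re^{\rho}\ch$ together with \Thm{thmsnow} applied at a maximal weight (you organize this as a maximal-counterexample contradiction, where the paper iteratively strips off maximal simple constituents of $N^{ss}$), and (ii) reduces to the fact that the highest weights of the simple subquotients are maximal in their dot-orbits. The only cosmetic difference is that for (ii) you invoke the complete-reducibility criterion recalled in~\ref{typnoncrit}, while the paper argues directly inside a block $\CO_{\Lambda}$ with $\Lambda=W(\lambda).\lambda$.
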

  \begin{proof} (i) According to \cite{DGK} $N$ has composition series,
    we denote by $N^{ss}$ the direct sum of all simple
    subquotients of $N$. Note that $N$ and $N^{ss}$ have the same
    characters and therefore $N$ is snowflake if and only if $N^{ss}$
    is snowflake. We therefore may assume that $N$ is semisimple.
Let $\lambda+\rho$ be maximal in $\supp(Re^{\rho}\ch N)$. One has
    $\Delta(\lambda)=\Delta(N)$. Since $N$ is a snowflake
    module, $r_\beta.\lambda<\lambda$ for any
    $\beta\in\Delta^+(\lambda)$. Hence $(\lambda+\rho)(\beta^\vee)>0$.
    By~\Thm{thmsnow}, $L(\lambda)$ is a snowflake module. Now we
    consider $N/L(\lambda)$ and repeat the same argument.
Eventually   we get rid of every simple submodule of $N$.
We have proved that every simple subquotient of $N$ is snowflake.
On the other hand, for the converse just note that if $N_1,N_2\in \CO_{\Delta'}$ then $\Delta(N_1)=\Delta(N_2)=\Delta'$, so
if $N_1,N_2$ are snowflake modules, then $N_1\oplus N_2$ is also
a snowflake module. This establishes (i).

(ii) Let $N$ be an indecomposable snowflake module. Then $N$ is an
    object of $\CO_{\Lambda}$, see~\ref{equiv}. Since $N$ is
    non-critical $\Lambda=W(\lambda).\lambda$ for some $\lambda$. We
    may assume $\lambda$ is maximal in $\Lambda$. By (i) any simple
    subquotient of $N$ is snowflake, hence $N\simeq L(\lambda)$.
\end{proof}

  \subsubsection{}\begin{cor}{snowEnright}
    (i) If $N$ is snowflake, then for any
    $\alpha\in\Pi_{pr}\cap\Delta(N)$ the root space $\fg_{-\alpha}$ acts locally nilpotently
    on $N$.

    (ii) Take  $\alpha\in\Pi_{pr}$. If
    $N,\cC^{\alpha}(N)\in\CO$ and $N$ is snowflake then
$\cC^{\alpha}(N)$ is also snowflake.
  \end{cor}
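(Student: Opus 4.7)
The plan for part (i) is to translate the snowflake identity into $r_\alpha$-invariance of the ordinary character and then exploit the $\CO$-support condition. By \Lem{lemR}, $r_\alpha(Re^\rho)=-Re^\rho$ for any $\alpha\in\Pi_{pr}$ with $\alpha$ or $\alpha/2\in\Sigma$ (which, in the non-isotropic setting of this section, covers every $\alpha\in\Pi_{pr}$). Together with the defining relation $r_\alpha(Re^\rho\ch N)=-Re^\rho\ch N$ this yields $r_\alpha\ch N=\ch N$, i.e.\ $\dim N_\mu=\dim N_{r_\alpha\mu}$ for every weight $\mu$. Since $N\in\CO$ its support lies in a finite union of cones $\lambda_i-Q^+$, so every $\alpha$-string $(\mu_0+\mathbb{Z}\alpha)\cap\supp N$ is bounded above in $k$. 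The reflection $r_\alpha$ permutes such a string via $k\mapsto -\mu_0(\alpha^\vee)-k$, and the equality of multiplicities forces the string to be bounded below as well, hence finite. A finite $\alpha$-string immediately implies that $f_\alpha$ acts locally nilpotently on each weight space; since $\alpha\in\Pi_{pr}$ is real, $\fg_{-\alpha}$ is one-dimensional and spanned by $f_\alpha$, and (i) follows.

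For (ii) I would split on whether $\alpha$ lies in $\Delta(N)$. Suppose first that $\alpha\notin\Delta(N)$. Then the residue $a:=\mu(\alpha^\vee)\bmod\mathbb{Z}$, constant on $\supp N$, is non-integral; $N$ belongs to $\CO\cap\cM(a)$, and \Thm{thmeqO}(iii) yields $Re^\rho\ch\cC^\alpha N=r_\alpha(Re^\rho\ch N)$. Using the identity $\Delta(w\lambda)=w\Delta(\lambda)$ from \S\ref{Deltalambda} one checks $\Delta(\cC^\alpha N)=r_\alpha\Delta(N)$. For any $\beta'\in\Delta(N)$ set $\beta:=r_\alpha\beta'\in\Delta(\cC^\alpha N)$; the conjugation identity $r_\beta r_\alpha=r_\alpha r_{\beta'}$ together with the snowflake identity for $N$ gives
$$r_\beta(Re^\rho\ch\cC^\alpha N)=r_\alpha r_{\beta'}(Re^\rho\ch N)=-r_\alpha(Re^\rho\ch N)=-Re^\rho\ch\cC^\alpha N,$$
so $\cC^\alpha N$ is snowflake.

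In the complementary case $\alpha\in\Delta(N)$ the residue is integral, so $\cC^\alpha N$ is $\cC_0 N$. By part (i), $f_\alpha$ acts locally nilpotently on $N$, whence $N^f=N$ and the localization $N[f^{-1}]$ is zero. Consequently $\dD_0 N=0$ and $\cC^\alpha N=\cC_0 N=\Gamma_e(\dD_0 N)=0$, which is trivially snowflake.

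I expect the main obstacle to be the step in (i) that converts the formal character symmetry into the concrete local-nilpotency statement: once one notices that $Re^\rho$ itself is $r_\alpha$-skew, the argument becomes an elementary boundedness-plus-palindrome observation on $\alpha$-strings, but without \Lem{lemR} the snowflake identity is entangled with the Weyl denominator and does not directly constrain weight-space dimensions. Part (ii) is then essentially formal: in the non-integral case it reduces to \Thm{thmeqO}(iii) together with a single root-reflection identity, and in the integral case to the vanishing observation that the Enright functor kills any snowflake module along a root in $\Delta(N)$.
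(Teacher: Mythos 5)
Your part (ii) is exactly the paper's argument (split on whether $\alpha\in\Delta(N)$; in the non\-integral case invoke \Thm{thmeqO}(iii) and conjugate the reflections; in the integral case observe that by (i) one has $N^f=N$, hence $\dD_0N=N[f^{-1}]=0$ and $\cC^{\alpha}(N)=0$), just written out in more detail, and it is correct. Part (i) is where you diverge: the paper first proves the statement for simple $N$ via \Thm{thmsnow} (snowflake $\Leftrightarrow(\lambda+\rho)(\alpha^{\vee})>0$ on $\Pi(\lambda)$, which together with $\lambda(\alpha^{\vee})\in\mathbb{Z}$ and $\Pi_{pr}\cap\Delta(\lambda)\subset\Pi(\lambda)$ gives dominance and hence local nilpotency), extends it to $N^{ss}$ by \Thm{thmcomred}(i), and only then uses the character argument to pass from $N^{ss}$ to $N$. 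You instead try to obtain $r_{\alpha}\ch N=\ch N$ in one stroke by cancelling $Re^{\rho}$ from the snowflake identity.

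That cancellation is the one step that needs real justification. The identity $r_{\alpha}(Re^{\rho}\ch N)=r_{\alpha}(Re^{\rho})\cdot r_{\alpha}(\ch N)$ is fine (the pairs contributing to a given coefficient biject with those for $Re^{\rho}\ch N$), so you do get $Re^{\rho}\bigl(r_{\alpha}\ch N-\ch N\bigr)=0$. But $r_{\alpha}\ch N$ no longer has support bounded above in the $\alpha$-direction, and on formal sums with support unbounded along $\alpha$ the element $Re^{\rho}$ is a zero divisor: already for $\fsl_2$ it annihilates $\sum_{n\in\mathbb{Z}}e^{\mu+n\alpha}$. So $Re^{\rho}Z=0$ does not by itself give $Z=0$. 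The conclusion is nevertheless true and the step can be repaired: writing $\beta=\alpha$ or $\alpha/2\in\Sigma$ and projecting along $\mathbb{Z}\beta$, take a maximal slab of $\supp Z$; since $\beta$ is simple, no positive root outside $\{\beta,2\beta\}$ is a multiple of $\beta$, so the restriction of $Re^{\rho}$ to $\rho-\mathbb{Z}_{\geq0}\beta$ is $e^{\rho}(1-e^{-\beta})$ and $Z$ must be constant along each $\beta$-line of the top slab; on such a line $Z_{\mu}=-\dim N_{\mu}\leq 0$ far in the $-\alpha$ direction and $Z_{\mu}=\dim N_{r_{\alpha}\mu}\geq 0$ far in the $+\alpha$ direction, forcing the constant to vanish. (Alternatively, follow the paper and deduce $r_{\alpha}\ch N=\ch N$ from the composition factors via \Thm{thmsnow} and \Thm{thmcomred}.) Once $r_{\alpha}\ch N=\ch N$ is established, your boundedness-plus-palindrome deduction of finite $\alpha$-strings and local nilpotency of the one-dimensional space $\fg_{-\alpha}$ is correct and is precisely the ``standard $\fsl_2$-argument'' the paper alludes to.
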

  \begin{proof}
    By~\Thm{thmsnow}, the assertion (i) holds for simple $N$ and thus for
    $N^{ss}$ in general. The standard $\fsl_2$-argument
    implies that if $\fg_{-\alpha}$ acts locally nilpotently
    on $N^{ss}$, then it acts locally nilpotently on $N$.

    For (ii) consider two cases. If $\alpha\not\in\Delta(N)$, then the
    assertion follows from~\Thm{thmeqO} (iii). If
    $\alpha\in\Delta(N)$, then $\cC^{\alpha}(N)=0$ by (i).
    \end{proof}

  \subsection{}\label{prochee}
For $\lambda\in\fh^*$ denote by $M_{pr}(\lambda)$ (resp.,
$L_{pr}(\lambda)$)
the Verma (resp., simple) $\fg_{pr}$-module
and by $I(\lambda)$ (resp., $I_{pr}(\lambda)$) the maximal proper
subdmodule of $M(\lambda)$ (resp., $M_{pr}(\lambda)$);
 for $w\in W$ we set
$$w\circ \lambda:=w(\lambda+\rho_{pr})-\rho_{pr}.$$

\subsubsection{}
\begin{thm}{corsnowflakes} A $\fg$-module $N$ is snowflake if and only if
$\Res^{\fg}_{\fg_{pr}}N$ is a snowflake $\fg_{pr}$-module.
\end{thm}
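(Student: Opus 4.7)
The plan is to reduce the equivalence to the $W$-invariance of the ratio $D:=Re^{\rho}/(R_{pr}e^{\rho_{pr}})$ of the two Weyl denominators, and then exploit that $\ch N$ and the root subsystem $\Delta(N)$ coincide for $N$ and $\Res^{\fg}_{\fg_{pr}}N$. In non-isotropic type (see~\S\ref{non-isotropictype}) we have $\cB=\{\Sigma\}$, the algebra $\fg_{pr}$ is an ordinary Kac--Moody Lie algebra $\fg(B)$ with the same Cartan $\fh$ and the same Weyl group $W$, and each element of $\Pi_{pr}$ is either an even element of $\Sigma$ or twice an odd element of $\Sigma$. Consequently $\ch N = \ch\,\Res^{\fg}_{\fg_{pr}}N$ as formal weight sums and $\Delta(N) = \Delta(\Res^{\fg}_{\fg_{pr}}N)$; membership in $\CO$ transfers by~\Lem{lemOOpr} combined with the inclusion $Q_{pr}^+\subset Q^+$.

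Next I would verify that $D$ is a well-defined unit in the formal-series completion of the group algebra of $\fh^*$ (the constant terms of $R$ and $R_{pr}$ are both $1$, so $R_{pr}^{-1}$ makes sense). The key step is the $W$-invariance of $D$: by~\Lem{lemR}, which applies to every $\alpha\in\Pi_{pr}$ in non-isotropic type, one has $r_\alpha(Re^{\rho}) = -Re^{\rho}$, while the classical Kac--Moody denominator anti-invariance gives $r_\alpha(R_{pr}e^{\rho_{pr}}) = -R_{pr}e^{\rho_{pr}}$. Applying $r_\alpha$ to the identity $Re^{\rho}=D\cdot R_{pr}e^{\rho_{pr}}$ and comparing signs yields $r_\alpha(D)=D$ for all $\alpha\in\Pi_{pr}$, and since these reflections generate $W$, the ratio $D$ is $W$-invariant.

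Finally, with $D$ invariant and invertible, for every $\alpha\in\Delta(N)$ one has
\[
r_\alpha\bigl(Re^{\rho}\ch N\bigr) \;=\; D\cdot r_\alpha\bigl(R_{pr}e^{\rho_{pr}}\ch N\bigr),
\]
so dividing by $D$ shows that $r_\alpha(Re^{\rho}\ch N) = -Re^{\rho}\ch N$ is equivalent to $r_\alpha(R_{pr}e^{\rho_{pr}}\ch N) = -R_{pr}e^{\rho_{pr}}\ch N$. Ranging $\alpha$ over $\Delta(N)=\Delta(\Res^{\fg}_{\fg_{pr}}N)$ gives the theorem. The only delicate point is the $W$-invariance of $D$, which is a formal consequence of the two denominator anti-invariance statements; everything else is bookkeeping about characters and supports under restriction.
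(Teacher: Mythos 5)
Your reduction works in one direction only, and the direction it does prove is the easy one. The ratio that is genuinely $W$-invariant as a formal sum is $D^{-1}=R_{pr}e^{\rho_{pr}}/(Re^{\rho})$: expanding $(1-e^{-\alpha})^{-1}$ over the $W$-stable set $\Delta_{\ol{0}}^+\setminus\Delta_{pr}$ and multiplying by the $W$-invariant expression $e^{\rho_{pr}-\rho}\prod_{\alpha\in\Delta^+_{\ol{1}}}(1+e^{-\alpha})$ gives a formal sum supported on a lower set whose term-by-term image under every $r_\alpha$ is itself; multiplying $Re^{\rho}\ch N$ by it yields $R_{pr}e^{\rho_{pr}}\ch N'$ and hence the implication ``$N$ snowflake $\Rightarrow \Res^{\fg}_{\fg_{pr}}N$ snowflake'', exactly as in the paper. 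But your $D=Re^{\rho}/(R_{pr}e^{\rho_{pr}})$ is \emph{not} $W$-invariant for the action of~\ref{Weyldenom}: inverting a formal sum (re-expanding by geometric series in the $-Q^+$ direction) does not commute with $r_\alpha$. For $\fg=\mathfrak{osp}(1|2)$ one has $D=(e^{\alpha/4}+e^{-\alpha/4})^{-1}=e^{-\alpha/4}-e^{-3\alpha/4}+e^{-5\alpha/4}-\cdots$, while $r_{\alpha}(D)=e^{\alpha/4}-e^{3\alpha/4}+\cdots$, with disjoint support. Your derivation of $r_\alpha(D)=D$ by ``comparing signs'' tacitly cancels $R_{pr}e^{\rho_{pr}}$, but in the ring of formal sums with unrestricted support this element is a zero divisor: one checks that $(r_\alpha(D)-D)\cdot R_{pr}e^{\rho_{pr}}=0$ even though $r_\alpha(D)\ne D$. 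The same zero-divisor phenomenon invalidates the final ``dividing by $D$''. This is precisely the pitfall the paper itself flags in~\ref{disc}.

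Consequently the implication ``$\Res^{\fg}_{\fg_{pr}}N$ snowflake $\Rightarrow N$ snowflake'' is not established by your argument, and it is exactly here that the paper has to work: it reduces to $N=L(\nu)$ simple by \Thm{thmcomred}, invokes the numerical criterion $(\nu+\rho)(\beta^{\vee})>0$ for all $\beta\in\Pi(\nu)$ from \Thm{thmsnow}, and for each such $\beta$ uses a chain of Enright functors (\Cor{corOnu1}, \Thm{propid}) to move $\beta$ into $\Pi_{pr}$, where snowflakeness of the $\fg_{pr}$-constituent $L_{pr}(w.\nu)$ forces $(w.\nu)((w\beta)^{\vee})\in\mathbb{Z}_{\geq 0}$ via local nilpotency of $\fg_{-w\beta}$. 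Some module-theoretic input of this kind is needed; the equivalence is not a purely formal consequence of the two denominator identities.
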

\begin{proof}
 Let $N':=\Res^{\fg}_{\fg_{pr}}N$.  By~\Lem{lemOOpr}, $N'\in\CO(\fg_{pr})$.
 {One has $\Delta(N)=\Delta(N')$.}
  Let $\rho_{pr}$ and $R_{pr}$ denote a Weyl vector and the
  Weyl denominator for $\fg_{pr}$ respectively. {Note that}
  $$\frac{R_{pr}e^{\rho_{pr}}}{Re^{\rho}}=e^{\rho_{pr}-\rho}\prod_{\alpha\in\Delta_{\ol{1}}^+}
  (1+e^{-\alpha})\prod_{\alpha\in\Delta_{\ol{0}}^+\setminus\Delta_{pr}}(1-e^{-\alpha})^{-1}$$
  is $W$-invariant, since $\Delta_{\ol{0}}^+\setminus\Delta_{pr}$
  and $e^{\rho_{pr}-\rho}\prod_{\alpha\in\Delta_{\ol{1}}^+}(1+e^{-\alpha})$
  are $W$-invariant. This establishes that if $N$ is  snowflake, then
  $N'$ is snowflake.

Now let $N'$ be snowflake. By~\Thm{thmcomred} we may assume without loss of
generality that $N$ is simple, i.e. $N=L(\nu)$. By~\Thm{thmsnow}, it is enough to verify that
 $(\nu+\rho)(\beta^{\vee})>0$ for each $\beta\in \Pi(\nu)$. Take $\beta\in
  \Pi(\nu)$
  and {choose} $w,\cC$ as in~\Cor{corOnu1} (with $w\beta\in\Pi_{pr}$).
By~\Thm{propid},
  $$\cC(N')=\Res^{\fg}_{\fg_{pr}}\cC(L(\nu))=\Res^{\fg}_{\fg_{pr}} L(w.\nu).$$

  Since $N'$ is a snowflake
  module, $\cC(N')$ is snowflake by~\Cor{snowEnright} (ii). Clearly,
  $L_{pr}(w.\nu)$
  is a subquotient of $\Res^{\fg}_{\fg_{pr}} L(w.\nu)$.
  By~\Thm{thmcomred},  $L_{pr}(w.\nu)$
  is snowflake;  since  $w\beta\in\Pi_{pr}$,~\Cor{snowEnright} (i) gives
$$(w.\nu)((w\beta)^{\vee})\in\mathbb{Z}_{\geq 0}.$$
Since $\rho((w\beta)^{\vee})>0$ we obtain
$(w.\nu+\rho)((w\beta)^{\vee})>0$. Hence
  $$(\nu+\rho)((\beta^{\vee}))=(w.\nu+\rho)((w\beta)^{\vee})>0$$
  as required. This completes the proof.
\end{proof}

\subsubsection{}
\begin{thm}{propAr}
  Let $\lambda\in\fh^*$ be such that $M(\lambda)$ and
  $M_{pr}(\lambda)$ are non-critical modules (over $\fg$ and
  $\fg_{pr}$
  respectively). Let $M'\subset M(\lambda)$
be the $\fg_{pr}$-submodule of $M(\lambda)$ generated by the highest weight vector. (Clearly, $M'$ is isomorphic to  $M_{pr}(\lambda)$.)
If   $L(\lambda)$ is a snowflake module, then
  $I(\lambda)$ is generated by $I_{pr}(\lambda)\subset M'$.
\end{thm}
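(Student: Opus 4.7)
First I would establish the easy inclusion $U(\fg) I_{pr}(\lambda) \subset I(\lambda)$ from the snowflake hypothesis: by Theorem \ref{corsnowflakes} combined with Theorem \ref{thmcomred}(ii), $\Res^{\fg}_{\fg_{pr}} L(\lambda)$ is a completely reducible snowflake $\fg_{pr}$-module, so the $\fg_{pr}$-submodule of $L(\lambda)$ generated by the image of the highest weight vector is the simple summand $L_{pr}(\lambda)$. The composition $M'\hookrightarrow M(\lambda) \twoheadrightarrow L(\lambda)$ thus becomes the standard projection $M_{pr}(\lambda) \twoheadrightarrow L_{pr}(\lambda)$ with kernel $I_{pr}(\lambda)$, giving $I_{pr}(\lambda)\subset I(\lambda)$ and hence $U(\fg) I_{pr}(\lambda) \subset I(\lambda)$.

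For the reverse inclusion $I(\lambda) \subset U(\fg) I_{pr}(\lambda)$ my plan is to use Corollary \ref{corsnow} twice: applied to $L(\lambda)$, it describes $I(\lambda)$ as the $\fg$-module generated by singular vectors $v_\beta^{\fg} \in M(\lambda)^{\fn^+}_{r_\beta.\lambda}$ for $\beta \in \Pi(\lambda)$ (with one-dimensional weight spaces), and applied to $L_{pr}(\lambda)$ it describes $I_{pr}(\lambda)$ as the $\fg_{pr}$-module generated by singular vectors $v_\beta^{pr} \in (M')^{\fn^+_{pr}}_{r_\beta \circ \lambda}$ for the same $\beta$. It thus suffices to show $v_\beta^{\fg} \in U(\fg)\cdot v_\beta^{pr}$ for each $\beta \in \Pi(\lambda)$. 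I would reduce this to the case $\beta \in \Pi_{pr}$: by Lemma \ref{lemwmin}(iii), choose $w = r_{\alpha_s}\cdots r_{\alpha_1}$ with $\alpha_i \in \Pi_{pr}$ such that $\lambda$ is $w$-friendly and $w\beta \in \Pi_{pr}$, and apply the Enright composition $\cC = \cC^{\alpha_s}\cdots\cC^{\alpha_1}$, which is an equivalence of categories by Corollary \ref{corOnu}, commutes with $\Res^{\fg}_{\fg_{pr}}$ by Proposition \ref{proptwEnright}(i), and transports the singular generators to singular generators in the transformed Verma and its $\fg_{pr}$-sub-Verma.

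For the base case $\beta \in \Pi_{pr}$ I would handle two subcases. If $\beta \in \Sigma$ is an even simple root, the Chevalley--Serre relation $[e_\gamma, f_\beta] = \delta_{\gamma,\beta}\beta^\vee$ for $\gamma \in \Sigma$ implies that $v_\beta^{pr} = f_\beta^{\lambda(\beta^\vee)+1} v_\lambda$ is $\fn^+$-singular in $M(\lambda)$; combined with the equality $\rho(\beta^\vee) = \rho_{pr}(\beta^\vee)$ (so $r_\beta.\lambda = r_\beta\circ\lambda$), this places $v_\beta^{pr}$ in the same one-dimensional weight space as $v_\beta^{\fg}$, making them proportional. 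If instead $\beta = 2\alpha$ with $\alpha \in \Sigma$ odd non-isotropic, Lemma \ref{lemtwVerma}(ii) produces a singular vector $w$ at weight $r_\beta.\lambda$ in the $\mathfrak{osp}(1|2)$-Verma $U(\langle e_\alpha, f_\alpha\rangle)\,v_\lambda$; the relations $[e_\gamma, f_\alpha] = 0$ for $\gamma \in \Sigma\setminus\{\alpha\}$ guarantee that $w$ is $\fn^+$-singular in $M(\lambda)$, hence a nonzero multiple of $v_\beta^{\fg}$, and a direct $\mathfrak{osp}(1|2)$-commutator computation shows that $e_\alpha \cdot v_\beta^{pr}$ is a nonzero scalar multiple of $w$ (nonvanishing thanks to the snowflake positivity $(\lambda+\rho)(\beta^\vee) > 0$ from Theorem \ref{thmsnow}), placing $v_\beta^{\fg}$ in $\fn^+\cdot v_\beta^{pr} \subset U(\fg)I_{pr}(\lambda)$.

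The main obstacle will be the doubled subcase $\beta = 2\alpha$: here $v_\beta^{\fg}$ sits an odd simple root $\alpha$ higher in $\fh$-weight than $v_\beta^{pr}$, so no element of $\fg_{pr}$ alone can connect them, and the crucial bridge is the odd generator $e_\alpha \in \fn^+ \setminus \fn^+_{pr}$ together with the explicit commutator identity inside $\mathfrak{osp}(1|2)$ underlying Lemma \ref{lemtwVerma}(ii); a subsidiary technical point is that in the Enright reduction the $\fg_{pr}$-Verma $\cC(M')$ inside $M(w.\lambda)$ is not the canonical $\fg_{pr}$-Verma generated by the highest weight vector (since $w.\lambda \neq w\circ\lambda$ when $\rho\neq\rho_{pr}$), so the base-case arguments must be applied to this shifted configuration rather than directly to $M_{pr}(w.\lambda)$.
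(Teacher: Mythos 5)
Your architecture is close to the paper's: the easy inclusion $I_{pr}(\lambda)\subset I(\lambda)$ via complete reducibility of $\Res^{\fg}_{\fg_{pr}}L(\lambda)$, the reduction of the converse to the singular generators from \Cor{corsnow}, and the Enright transport to $w\beta\in\Pi_{pr}$ are all correct, and your two base-case computations for $\beta\in\Pi(\lambda)\cap\Pi_{pr}$ (the Serre-relation argument for $\beta\in\Sigma$ even, and $e_{\alpha}f_\beta^{n+1}v_\lambda=-(n+1)f_\beta^{n}F_\alpha v_\lambda$ with $n=\lambda(\beta^\vee)\geq 0$ for $\beta=2\alpha$) check out. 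The problem is the point you flag as ``a subsidiary technical point'' and then do not resolve: it is in fact the crux. After applying $\cC=\cC^{\alpha_1,\ldots,\alpha_s}$, the $\fg_{pr}$-Verma $\cC(M')\cong M_{pr}(w\circ\lambda)$ is generated by a vector $u'$ of weight $w\circ\lambda\neq w.\lambda$ which is \emph{not} $\fn^+$-singular (already for $s=1$ with $\alpha_1/2\in\Sigma$ odd one has $u'=\pm F_{\alpha_1/2}u$ where $u$ is the highest weight vector of $M(w.\lambda)$). The transported generator of $\cC(U(\fg_{pr})v_\beta^{pr})$ is $f_{w\beta}^{m+1}u'$ with $m=(w\circ\lambda)((w\beta)^{\vee})$, and its weight differs from $r_{w\beta}.(w.\lambda)$ by $r_{w\beta}\bigl((1-w)(\rho_{pr}-\rho)\bigr)$, a sum of several odd root directions when the chain contains reflections in doubled roots. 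Neither the Serre-relation argument (which needs $u'$ to be killed by all $e_\gamma$) nor a single application of $e_{w\beta/2}$ applies here, so your base case does not transfer to this configuration, and you give no substitute.

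The paper closes exactly this gap by a different mechanism, which your proposal is missing: since $M'$ generates $M(\lambda)$ and both $\cC$ and $\Res_{\fg_{pr}}\cC$ are equivalences, $u'$ generates $\cC(M(\lambda))$ over $U(\fg)$. Hence in the quotient $\cC(M(\lambda))/U(\fg)f_{w\beta}^{m+1}u'$ the image of $u'$ is killed by a power of $f_{w\beta}$, and local nilpotency of $\operatorname{ad}f_{w\beta}$ on $U(\fg)$ then forces $f_{w\beta}$ to act locally nilpotently on the entire quotient, in particular on the image of the highest weight vector of $M(w.\lambda)$; the standard $\fsl_2$ (resp.\ $\osp(1|2)$) argument for the simple root $w\beta$ then places the $\fn^+$-singular vector of weight $r_{w\beta}.(w.\lambda)$ inside $U(\fg)f_{w\beta}^{m+1}u'$, and $\cC^{-1}$ finishes. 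Without this generation-plus-nilpotency step (or an equally effective replacement for the shifted configuration), your proof is incomplete precisely for those $\beta\in\Pi(\lambda)\setminus\Pi_{pr}$ whose Enright chain involves doubled odd roots --- which is the generic situation for $\osp(1|2\ell)^{(1)}$, the case the theorem is ultimately used for.
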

\begin{proof}
Let $J$ be
  the $\fg$-submodule generated by $I_{pr}(\lambda)$. The module
  $$E:=M'/(M'\cap I(\lambda))$$
  is a $\fg_{pr}$-submodule of
  $L(\lambda)=M(\lambda)/I(\lambda)$, which, by~\Thm{corsnowflakes}, is
a $\fg_{pr}$-snowflake module.
By~\Thm{thmcomred}, $E$ is a snowflake $\fg_{pr}$-module and is
completely reducible (since  $M_{pr}(\lambda)$ is non-critical).
Since $E$ is a quotient of a Verma module, $E$ is simple.
Hence $M'\cap I(\lambda)=I_{pr}(\lambda)$, so
$J\subset I(\lambda)$. {In particular,
$$J\cap M'=I_{pr}(\lambda).$$}
By~\Cor{corsnow} in order to show that $J=I(\lambda)$
it is enough to verify that
\begin{equation}\label{homgJ}
\Hom_{\fg}(M(r_{\beta}.\lambda), J)\not=0
\end{equation}
for each  $\beta\in \Pi(\lambda)$.
Fix $\beta\in \Pi(\lambda)$ and
take $w,\cC$ as in~\Cor{corOnu1} (with $w\beta\in\Pi_{pr}$).
The $\fg$-module
$\cC(M(\lambda))$ contains the $\fg_{pr}$ submodule
$\cC(M')\cong M_{pr}(w\circ \lambda)$; we denote the highest weight vector
of $\cC(M')$ by $v'_{w\circ \lambda}$.
 Note that $M'$ generates
$M(\lambda)$.
Since  both $\cC$ and $\Res_{\fg_{pr}}\cC$  are equivalence of categories
to $\fg_{pr}$, $\cC(M')$ generates
 $\cC(M(\lambda))$. In particular, $\cC(M(\lambda))=U(\fg)v'_{w\circ \lambda}$.

Since $E$ is a snowflake module,~\Cor{corsnow} applied to $\fg_{pr}$
implies that $I_{pr}(\lambda)$
 contains a $\fg_{pr}$-singular vector
 of the weight $r_{\beta}\circ\lambda$. Then,
by~\Prop{thmVerma}, $\cC(I_{pr}(\lambda))$
contains a $\fg_{pr}$-singular vector of the weight $wr_{\beta}\circ\lambda$.
Since $\Res_{\fg_{pr}}\cC$ is an equivalence of categories, it defines an isomorphism of lattices of submodules in $\Res_{\fg_{pr}}M(\lambda)$
  and $\Res_{\fg_{pr}}\cC (M(\lambda))$. In particular, we have
$$\cC(I_{pr}(\lambda))=\cC(J\cap M')=\cC(J)\cap \cC(M').$$

Therefore  by~\Prop{thmVerma}
$\cC(J)\cap \cC(M')$ contains a singular vector of the weight
 $wr_{\beta}\circ\lambda=r_{w\beta}w\circ\lambda$. Since
$w\beta\in\Pi_{pr}$,   the root space
$\fg_{-w\beta}$ acts nilpotently on
the image of $v'_{w\circ \lambda}$ in the quotient
 $$\cC(M')/(\cC(J)\cap \cC(M'))\subset \cC(M(\lambda))/\cC(J).$$
Since $\cC(M(\lambda))=U(\fg)v'_{w\circ\lambda}$,
$\,\fg_{-w\beta}$ acts locally nilpotently
on $\cC(M(\lambda))/\cC(J)$. Since  $\cC(M(\lambda))\cong M(w.\lambda)$,
this means that
$$\Hom_{\fg}(M(r_{w\beta}w.\lambda), \cC(J))\not=0.$$
By applying $\cC^{-1}$ we obtain~(\ref{homgJ}). This completes the proof.
\end{proof}

\subsection{The category $\Snow$}\label{snow}
Recall that $\CO^{inf}$ is a full category of $\fg$-modules $N$ such that
any cyclic submodule of $N$ lies in $\CO$.

\subsubsection{}
\begin{defn}{defnSnow}
Let $\Snow$ be the subcategory of ${\CO}^{inf}$
consisting of non-critical
modules $N$ such that every submodule of $N$ generated by a weight vector is a snowflake module.
\end{defn}

\subsubsection{Remark}
A module in  ${\CO}^{inf}$ does not necessarily admit the character, so we can not use
the formula $w(Re^{\rho}\ch N)=(-1)^{l(w)} Re^{\rho}\ch N$ for defining $\Snow$.

\subsubsection{}
\begin{cor}{snowcat}
The category $\Snow$ is completely reducible.
\end{cor}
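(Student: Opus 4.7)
The plan is to reduce to Theorem \ref{thmcomred}(ii), which states that any non-critical snowflake module is completely reducible. First I would take $N \in \Snow$ and, since $\fh$ acts diagonally on $N$ (this is part of the definition of $\CO^{inf}$ via \Defn{defnOinf}), observe that $N$ is spanned by its weight vectors. It therefore suffices to show that for each weight vector $v \in N$, the cyclic submodule $N' := U(\fg)v$ is completely reducible, since then $N$ is a sum of completely reducible submodules and hence completely reducible by \cite{Lang}, Ch. XVII, Sect. 2.

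Next I would verify that each such $N'$ lies in $\CO^{fin}$. This follows from~\ref{Ofininf}: since $N \in \CO^{inf}$, every cyclic submodule of $N$ already lies in $\CO^{fin}$. By the definition of $\Snow$, $N'$ is a non-critical snowflake module, so \Thm{thmcomred}(ii) applies and yields that $N'$ is completely reducible.

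Finally, I would assemble the conclusion: every simple submodule of $N'$ is in particular a simple submodule of $N$, and because $N$ is the sum of such submodules $N' = U(\fg)v$ as $v$ ranges over weight vectors, $N$ is generated by its simple submodules. By the standard characterization of semisimple modules, $N$ is completely reducible. The main (and only) substantive step is the reduction to the cyclic case, which rests on the fact that $\CO^{inf}$ is the inductive completion of $\CO^{fin}$; no further obstacle is expected, since \Thm{thmcomred}(ii) already handles the non-trivial content.
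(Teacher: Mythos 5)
Your proof is correct, and it rests on exactly the same two key facts as the paper's argument: that every submodule of $N\in\Snow$ generated by a weight vector is a non-critical snowflake module lying in $\CO^{fin}\subset\CO$, and that such modules are completely reducible by \Thm{thmcomred}~(ii). Where you diverge is in the final assembly. The paper invokes the socle-type criterion of~\cite{GK1}, Lem.~1.3.1 (every object of $\Snow$ has a simple submodule, and $\Ext^1(L,L')=0$ for simple $L,L'\in\Snow$, the latter deduced from complete reducibility in $\CO$ and then lifted to $\CO^{inf}$), whereas you write $N$ directly as the sum $\sum_v U(\fg)v$ over weight vectors $v$ -- legitimate since $\fh$ acts diagonally by (C1) -- and apply the standard characterization that a sum of simple submodules is semisimple (\cite{Lang}, Ch.~XVII, Sect.~2, the same reference the paper uses in the proof of \Lem{lemsl2a}). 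Your route is slightly more self-contained, since it avoids the external criterion and the $\Ext^1$ computation in $\CO^{inf}$; the paper's route packages the argument into a reusable criterion. Both are complete proofs.
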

\begin{proof}
The standard reasoning (see, for instance,~\cite{GK1}, Lem. 1.3.1)
shows that it is enough to check that any module in $Snow$ has a simple submodule and
$\Ext^1_{\Snow}(L,L')=0$ for simple modules $L,L'\in\Snow$.
Note that  any  cyclic submodule of a module in $\Snow$  is a snowflake module and that
by~\Thm{thmcomred} snowflake modules are completely reducible.
In particular, every module in $\Snow$ has a simple submodule and
for simple modules $L,L'\in\Snow$ one has
$\Ext^1_{{\CO}}(L,L')=0$, so
$\Ext^1_{{\CO}^{inf}}(L,L')=0$.
The claim follows.
\end{proof}

\subsubsection{}
Let $\fg'$ be a symmetrizable Kac--Moody Lie superalgebra with a base $\Sigma'$,
a Cartan algebra $\fh'$ and a Cartan matrix $A'$. Let
$\Sigma\subset\Sigma'$ be a non-empty set consisting of
non-isotropic roots;  denote by $A$
 the corresponding submatrix of $A'$.
The Kac--Moody Lie superalgebra $\fg:=\fg(A)$ can be embedded into $\fg'$,
in such a way that  $\fh:=\fg\cap \fh'$
is the Cartan subalgebra of $\fg$ and $\Sigma$ is the base of $\fg$,
see~\cite{Kbook2}, Ex. 1.2.
Note that $\fg$ is of non-isotropic type.
For $N\in\CO^{inf}(\fg')$ one has $\Res_{\fg'}^{\fg} N\in \CO^{inf}(\fg)$
(this is an advantage of $\CO^{inf}$ comparing to $\CO$).

We  denote by $\Delta$ (resp., $\Delta'$) the
root system of $\fg$ (resp., of $\fg'$) and by
$\Pi_{pr}$ (resp., $\Pi'_{pr}$)  the principal roots for $\fg$
(resp., for $\fg'$). We denote by
$W$ (resp., $W'$)
the Weyl group of $\fg$ (resp., of $\fg'$).
One has $\Delta=\Delta'\cap \mathbb{Z}\Sigma$,
see~\cite{Kbook2}, Ex. 1.2. We denote by $\rho$ (resp., $\rho'$)
the Weyl vector of $\fg$ (resp., $\fg'$).
Observe that for any $\alpha\in W\Sigma$ one has $\alpha^{\vee}\in \fh$
(since $\alpha^{\vee}\in\fh$ for $\alpha\in\Sigma$) and thus
$\rho(\alpha^{\vee})=\rho'(\alpha^{\vee})$.

For a $\fg'$-module $N$
we denote by $\Delta'(N)$ the root subsystem of
$W'\Pi'_{pr}$ introduced in~\S~\ref{Deltalambda};
for a $\fg$-module we denote by $\Delta(M)$
the corresponding root subsystem of $W\Pi_{pr}$. For every
$\lambda\in(\fh')^*$
we denote by $\lambda_{\fh}$ the restriction of
$\lambda$ to $\fh$; we use the notation $\Delta(\lambda)$
 for the root subsystem of $W\Pi_{pr}$  corresponding to $\lambda_{\fh}$ and
we denote by $\Pi(\lambda_{\fh})$ the base of  $\Delta(\lambda_{\fh})$.

\subsubsection{}
\begin{thm}{thmsnowmen}
(i) For $N\in\CO^{inf}(\fg)$ one has
$\Delta(\Res_{\fg}^{\fg'} N)=\Delta(N)\cap \Delta$.

(ii) Take $\lambda\in(\fh')^*$ and set  $M:=\Res_{\fg}^{\fg'} L(\lambda)$. One has
$\Delta(M)=\Delta(\lambda_{\fh})$. Moreover,  $M\in \Snow(\fg)$
if and only if
$(\lambda_{\fh}+\rho)(\beta^{\vee})>0$ for each $\beta\in\Pi(\lambda_{\fh})$.
\end{thm}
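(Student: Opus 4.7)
My plan is first to observe that $\alpha^\vee \in \fh$ for every $\alpha \in W\Pi_{pr}$, so $\nu(\alpha^\vee) = \nu_\fh(\alpha^\vee)$ for every $\nu \in (\fh')^*$; combined with $\supp(\Res_\fg^{\fg'} N) = \{\nu_\fh : \nu \in \supp N\}$, this will immediately give $\Delta(\Res_\fg^{\fg'} N) = \Delta(N) \cap W\Pi_{pr}$. The identification $W\Pi_{pr} = \Delta \cap W'\Pi'_{pr}$ will follow from $\Delta = \Delta' \cap \mathbb{Z}\Sigma$ (see \cite{Kbook2}, Ex.~1.2), since any real root of $\fg'$ lying in $\mathbb{Z}\Sigma$ is automatically a real root of $\fg$.

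\textbf{Part (ii), first statement and forward direction.} The equality $\Delta(M) = \Delta(\lambda_\fh)$ will be immediate from (i) applied to $N = L(\lambda)$, together with the identification $\Delta(\lambda) \cap W\Pi_{pr} = \Delta(\lambda_\fh)$ given by the same coroot observation. For the forward direction of the snowflake equivalence, assuming $M \in \Snow(\fg)$, I will consider the cyclic $\fg$-submodule $N := U(\fg) v_\lambda$ generated by the highest weight vector of $L(\lambda)$. Since $v_\lambda$ is killed by all positive root vectors of $\fg'$ and hence by $\fn^+$, $N$ is a highest weight $\fg$-module of weight $\lambda_\fh$ and a quotient of $M_\fg(\lambda_\fh)$; by hypothesis $N$ is snowflake, and by \Thm{thmcomred}(ii) it is completely reducible. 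Since $N_{\lambda_\fh}$ is one-dimensional (inherited from the Verma) and $N$ is cyclic on $v_\lambda$, I will conclude $N = L_\fg(\lambda_\fh)$, and \Thm{thmsnow} then yields $(\lambda_\fh + \rho)(\beta^\vee) > 0$ for every $\beta \in \Pi(\lambda_\fh)$.

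\textbf{Part (ii), reverse direction.} Conversely, assuming $(\lambda_\fh + \rho)(\beta^\vee) > 0$ for every $\beta \in \Pi(\lambda_\fh)$, $L_\fg(\lambda_\fh)$ will be snowflake by \Thm{thmsnow}; by \Thm{thmcomred}(i) (snowflake is a Serre property) and \Cor{snowcat}, it will suffice to check that every simple $\fg$-subquotient $L_\fg(\mu)$ of $M$ satisfies $(\mu+\rho)(\beta^\vee) > 0$ for each $\beta \in \Pi(\mu) = \Pi(\lambda_\fh)$. For each such $\beta$ I plan to use \Cor{corOnu1} to pick $w = r_{\alpha_s}\cdots r_{\alpha_1} \in W$ making $\lambda_\fh$ be $w$-friendly with $w\beta \in \Sigma_{pr}$; since the Enright functor $\cC := \cC^{\alpha_1,\ldots,\alpha_s}$ commutes with $\Res$ (\Prop{proptwEnright}(i)) and sends $L_\fg(\mu)$ to $L_\fg(w.\mu)$, the desired inequality transforms into $(w.\mu+\rho)((w\beta)^\vee) > 0$, reducing the problem to $\beta \in \Sigma_{pr}$. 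In this reduced case I plan to invoke the parabolic decomposition $\fg' = \fu^- \oplus \fg \oplus \fu^+$ with $\fu^\pm$ the sum of root spaces for $\pm(\Delta'^+ \setminus \Delta^+)$ (which is $\fg$-stable thanks to $\Delta = \Delta' \cap \mathbb{Z}\Sigma$), together with the PBW identification $M(\lambda) \cong U(\fu^-) \otimes M_\fg(\lambda_\fh)$ of $\fg$-modules. Every $\fg$-singular vector in $M(\lambda)$ will correspond to an element $x \otimes v_\lambda$ with $x \in U(\fu^-)^{\fn^+}$; since $\fsl_2(\beta)$ acts locally finitely on $\fu^-$ and hence on $\Sym(\fu^-)$, any such $x$ of weight $-\gamma$ will be an $\fsl_2(\beta)$-highest weight vector in a finite-dimensional summand, forcing $-\gamma(\beta^\vee) \geq 0$. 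Thus $\mu = \lambda_\fh - \gamma|_\fh$ will satisfy $(\mu+\rho)(\beta^\vee) = (\lambda_\fh+\rho)(\beta^\vee) - \gamma(\beta^\vee) \geq (\lambda_\fh+\rho)(\beta^\vee) > 0$, as desired.

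\textbf{Main obstacle.} The hardest step will be extending the PBW analysis from the Verma $M(\lambda)$ to the simple quotient $L(\lambda)$, since a singular vector in $L(\lambda)$ need not lift to a singular vector in $M(\lambda)$. I expect to handle this by an additional Enright reduction on the $\fg'$-side, in the spirit of the proof of \Thm{propAr}, first passing to a parameter $\lambda'$ (obtained by applying $\cC^{w'}$ for a suitable $w' \in W'$) for which a representative singular vector exists in a Verma-like quotient, before invoking the coroot inequality $\gamma(\beta^\vee) \leq 0$ and transferring the conclusion back via the inverse Enright equivalence.
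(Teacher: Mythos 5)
Parts (i) and the forward direction of (ii) follow essentially the paper's route. The one thin spot in (i) is your claim that a real root of $\fg'$ lying in $\mathbb{Z}\Sigma$ is ``automatically'' a real root of $\fg$: a $W'$-conjugate of an element of $\Pi'_{pr}$ that happens to land in $\Delta$ is not a priori a $W$-conjugate of an element of $\Pi_{pr}$, and the paper proves the inclusion $W'\Pi'_{pr}\cap\Delta\subset W\Pi_{pr}$ by normalizing the invariant form so that $||w\alpha||^2=2>0$ and invoking the criterion of~\S\ref{symnoniso} that, for symmetrizable non-isotropic type, a root is real if and only if its norm is positive. Your reduction of the reverse direction of (ii) to the simple $\fg$-subquotients $L_\fg(\mu)$ of $M$, and the Enright reduction of each $\beta\in\Pi(\lambda_\fh)$ to $w\beta\in\Pi_{pr}$, also match the paper.

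The genuine gap is in the last step of the reverse direction. First, the assertion that every $\fg$-singular vector of $M(\lambda)\cong U(\mathfrak{u}^-)\otimes M_\fg(\lambda_\fh)$ has the form $x\otimes v_\lambda$ with $x\in U(\mathfrak{u}^-)^{\fn^+}$ is false: $\fn^+$ acts on this tensor product by the Leibniz rule, its invariants are not the tensor product of invariants, and singular vectors are in general sums $\sum_i x_i\otimes v_i$ with $v_i$ strictly below the top of $M_\fg(\lambda_\fh)$, so your weight estimate does not apply to them. Second --- as you acknowledge --- even a correct statement about singular vectors of $\Res^{\fg'}_{\fg}M(\lambda)$ does not by itself control the highest weights of all simple $\fg$-subquotients of $\Res^{\fg'}_{\fg}L(\lambda)$, and the ``additional Enright reduction'' you sketch to bridge this is not an argument. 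The paper closes the loop much more cheaply: after the Enright reduction one has $w\beta\in\Pi_{pr}\subset\Pi'_{pr}$ and $(w(\lambda+\rho'))((w\beta)^{\vee})\in\mathbb{Z}_{>0}$, so by \cite{Kbook2}, Lem.~3.4 the root space $\fg'_{-w\beta}$ acts locally nilpotently on the simple $\fg'$-module $\cC(L(\lambda))$, hence on every simple $\fg$-subquotient $\cC(E)=L_\fg(w.\nu)$; local nilpotency of $\fg_{-w\beta}$ on a highest weight module with $w\beta\in\Pi_{pr}$ forces $(w.\nu+\rho)((w\beta)^{\vee})>0$, i.e.\ $(\nu+\rho)(\beta^{\vee})>0$, and \Thm{thmsnow} together with \Thm{thmcomred} finishes the proof. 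Replacing your PBW/singular-vector analysis by this local-nilpotency argument repairs the proof; as written, the reverse implication is not established.
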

\begin{proof}
First, let us show that
\begin{equation}\label{reim}
W'\Pi'_{pr}\cap \Delta=W\Pi_{pr}.
\end{equation}
One has $\Pi_{pr}=\Pi'_{pr}\cap\Delta$. This gives
the inclusion $\supset$. To check the inclusion $\subset$
we  can (and will) assume that $\Sigma$ is connected (and thus
$\Pi_{pr}$ is connected).
Let $\Pi''$ be the connected
component of $\Pi'_{pr}$ which contains $\Pi_{pr}$ and let
$W''$ be the subgroup of $W'$ generated
by $r_{\beta}, \beta\in\Pi''$. Let $\alpha\in\Pi'_{pr}, w\in W'$ be such that
$w\alpha\in \Delta$.
One readily sees that $\alpha\in\Pi''$ and that we may  assume
that $w\in W''$.
Normalizing $(-|-)$ by the condition
$||\alpha||^2=2$, we obtain $||w\alpha||^2=2$ and $||\beta||^2\in\mathbb{Q}_{>0}$
for each $\beta\in\Pi''$, in particular,
for $\beta\in \Pi_{pr}$. By~\S~\ref{symnoniso}, this implies that
{$w\alpha\in\Delta_{re}$. Since $\alpha$ is even,
$w\alpha\in W\Pi_{pr}$; this establishes~(\ref{reim}).}
Now (i) follows from the formula~(\ref{reim}) and the formula
$$\supp(\Res_{\fg}^{\fg'} N)=\{\mu|_{\fh}\ |\ \mu\in\supp(N)\}.
$$

For (ii) note that for $\alpha\in W\Pi_{pr}$ one has $\alpha^{\vee}\in \fh$ so
$\lambda(\alpha^{\vee})=\lambda_{\fh}(\alpha^{\vee})$; this gives
$$\Delta(M)=\{\alpha\in
W\Pi_{pr}|\ \lambda(\alpha^{\vee})\in\mathbb{Z}\}=\{\alpha\in
W\Pi_{pr}|\ \lambda_{\fh}(\alpha^{\vee})\in\mathbb{Z}\}=\Delta(\lambda_{\fh}).$$

From~\Thm{thmsnow} it follows that $M\in \Snow(\fg)$
implies
$(\lambda+\rho)(\beta^{\vee})>0$ for each $\beta\in\Pi(\lambda_{\fh})$.
Now let us assume that $(\lambda+\rho)(\beta^{\vee})>0$ for each $\beta\in\Pi(\lambda_{\fh})$ and
show that
$M\in \Snow(\fg)$, i.e.  that
every simple $\fg$ subquotient $E$ of $M$ is a snowflake $\fg$-module.
Since $E$ is a simple $\fg$-module in the category $\CO(\fg)$, one has
$E=L_{\fg}(\nu)$, where $\nu\in \fh^*$.

Take $\beta\in \Pi(\lambda_{\fh})$.
By~\Lem{lemwmin}, there exists $\alpha_1,\ldots,\alpha_s\in\Pi_{pr}$
such that $\lambda_{\fh}$ is
$(\alpha_1,\ldots,\alpha_s)$-friendly and $w\beta\in\Pi_{pr}$ for
$w:=r_{\alpha_s}\ldots r_{\alpha_1}\in W$.
By (i), $\Delta(\lambda_{\fh})=\Delta' (\lambda)\cap \Delta$, so
 $\lambda$ is
$(\alpha_1,\ldots,\alpha_s)$-friendly.
Let $\cC$ be the Enright functor $\cC^{\alpha_1,\ldots,\alpha_s}$. By~\Thm{propid},
$$\cC(M)=\Res^{\fg'}_{\fg}\cC(L(\lambda))=\Res^{\fg'}_{\fg} L(w(\lambda+\rho')-\rho').$$
One has
$$(w(\lambda+\rho'))(w\beta^{\vee})=(\lambda+\rho')(\beta^{\vee})=(\lambda_{\fh}+\rho)(\beta^{\vee}) $$
and thus $(w(\lambda+\rho'))(w\beta^{\vee})>0$ and
$w\beta\in \Delta'(w(\lambda+\rho')-\rho')$. Since $w\beta\in\Pi_{pr}\subset\Pi'_{pr}$, the root space
 $\fg'_{-w\beta}$
acts locally nilpotently on $\cC(L(\lambda))$ and thus on
$\cC(E)=L_{\fg}(w(\nu+\rho)-\rho)$. Therefore
$(w(\nu+\rho))(w\beta^{\vee})>0$, so $(\nu+\rho)(\beta^{\vee})>0$.
Using~\Thm{thmsnow} we conclude that $E=L_{\fg}(\nu)$ is a  snowflake $\fg$-module.
This completes the proof.
\end{proof}

\subsubsection{}
\begin{cor}{snowcor}
If $(\lambda+\rho)(\beta^{\vee})>0$ for each $\beta\in\Pi(\lambda_{\fh})$, then
$L(\lambda)$
is completely reducible over $\fg$.
\end{cor}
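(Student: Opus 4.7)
My plan is to deduce this corollary directly by chaining the two immediately preceding results. First I will invoke \Thm{thmsnowmen} (ii): the hypothesis $(\lambda+\rho)(\beta^{\vee})>0$ for every $\beta \in \Pi(\lambda_{\fh})$ is precisely the sufficient condition stated there, so it yields $M := \Res^{\fg'}_{\fg} L(\lambda) \in \Snow(\fg)$. In particular, $M$ is non-critical as a $\fg$-module and every $\fg$-submodule of $M$ generated by a weight vector is a snowflake module.

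Before applying that theorem I want to make sure the restriction really lives in the ambient category on which $\Snow(\fg)$ is defined. This is automatic: $L(\lambda) \in \CO(\fg') \subset \CO^{inf}(\fg')$, and the discussion preceding \Thm{thmsnowmen} records that $\Res^{\fg'}_{\fg}$ sends $\CO^{inf}(\fg')$ into $\CO^{inf}(\fg)$, which is one of the reasons the inductive completion $\CO^{inf}$ is preferable here over $\CO$.

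Next I will apply \Cor{snowcat}, which says that the category $\Snow(\fg)$ is completely reducible. Since $M \in \Snow(\fg)$, it decomposes as a direct sum of simple $\fg$-modules, and by construction this direct sum decomposition of $M$ is a direct sum decomposition of $L(\lambda)$ viewed as a $\fg$-module, which is exactly the claim.

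I do not anticipate a genuine obstacle; all the substantive work has already been carried out in \Thm{thmsnowmen} (ii) (whose proof uses the Enright functor to reduce checking the snowflake condition on simple subquotients to a single $w\beta \in \Pi_{pr}$) and in \Cor{snowcat} (which combines the existence of simple submodules in $\Snow$ with the vanishing $\Ext^1_{\CO}(L,L')=0$ for simple $L,L' \in \Snow$). The only bookkeeping point worth flagging is that the non-criticality clause built into the definition of $\Snow$ is automatically delivered by $M \in \Snow(\fg)$, so no separate criticality check on $\lambda_{\fh}$ is needed before invoking \Cor{snowcat}.
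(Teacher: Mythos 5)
Your proof is correct and is exactly the argument the paper intends: the corollary is stated without a separate proof precisely because it is the concatenation of \Thm{thmsnowmen}~(ii) (giving $\Res^{\fg'}_{\fg}L(\lambda)\in\Snow(\fg)$, non-criticality included) with \Cor{snowcat} (complete reducibility of $\Snow$). Your flagged bookkeeping points, including the harmless interchange of $\lambda$ with $\lambda_{\fh}$ and of $\rho$ with $\rho'$ on $\beta^{\vee}\in\fh$, are consistent with the observations made just before the theorem.
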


\subsection{Remark}
Non-critical simple snowflake modules appear in~\cite{GK1} as "weakly admissible modules" and some versions of~\Cor{snowcat}, \Prop{snowlevel} are proven there. In Prop. 2.2 of~\cite{GK1}
it is proven that for a snowflake $L(\lambda)$ the space $\Ext^1_{\fg}(L(\lambda),L(\lambda))$
 is naturally isomorphic to $\Delta(\lambda)^\perp$.
 In particular,
 if $L(\lambda)$ is snowflake with $\mathbb{C}\Delta(\lambda)=\mathbb{C}\Delta$,
 then $\Ext^1_{[\fg,\fg]}(L(\lambda),L(\lambda))=0$.
 Similarly to~\Cor{snowcat} we obtain the following corollary.

 \subsubsection{}\begin{cor}{tildecat}
 Let $N$ be a $\fg$-module with the following properties:

 every $v\in N$ generates a finite-dimensional $\fh+\fn^+$-submodule of $N$;

 every simple subquotient of $N$ is a snowflake module $L$
 with  $\mathbb{C}\Delta(L)=\mathbb{C}\Delta$.

 Then $N$ is completely reducible over $[\fg,\fg]$.
 \end{cor}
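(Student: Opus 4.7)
The plan is to follow the scheme of \Cor{snowcat}, now applied in the category of $[\fg,\fg]$-modules on which $\fh+\fn^+$ acts locally finitely. By the standard criterion of~\cite{GK1}, Lem. 1.3.1, it suffices to verify two properties: \emph{(a)} every nonzero module in this category has a simple $[\fg,\fg]$-submodule, and \emph{(b)} $\Ext^1_{[\fg,\fg]}(L,L')=0$ for any two simple $[\fg,\fg]$-modules $L,L'$ appearing as subquotients of $N$.

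For (a), local finiteness of $\fb:=\fh+\fn^+$ produces a singular $\fh$-weight vector: any $v\in N$ lies in a finite-dimensional $\fb$-module $V$, and the $\fn^+$-kernel $V^{\fn^+}$ is a nonzero $\fh$-stable subspace (since $[\fh,\fn^+]\subset\fn^+$) and thus contains a joint $\fh$-eigenvector $w$. The cyclic $\fg$-module $M:=U(\fg)w$ is then a highest weight module in $\CO_{\Delta(\mu)}$ whose simple subquotients are all snowflake; the Serre-subcategory property of \Thm{thmcomred}(i) forces $M$ itself to be snowflake, and \Thm{thmcomred}(ii) together with cyclicity yields $M=L(\mu)$. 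A simple $[\fg,\fg]$-submodule is then extracted from $L(\mu)$ by direct inspection of $[\fg,\fg]$-weight spaces, using that the Cartan directions complementary to $\fh\cap[\fg,\fg]$ act by scalars on each $\fg$-weight space of $L(\mu)$.

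For (b), I would invoke Prop. 2.2 of~\cite{GK1}: for a snowflake $L(\lambda)$ one has $\Ext^1_{\fg}(L(\lambda),L(\lambda))\cong\Delta(\lambda)^{\perp}\subset\fh^*$. A Hochschild--Serre argument for the abelian quotient $\fg/[\fg,\fg]$ then identifies $\Ext^1_{[\fg,\fg]}(L(\lambda),L(\lambda))$ with $\Delta(\lambda)^{\perp}/\Delta^{\perp}$, which vanishes precisely under the hypothesis $\mathbb{C}\Delta(\lambda)=\mathbb{C}\Delta$. For non-isomorphic simple snowflake subquotients $L(\lambda),L(\mu)$, the $\fg$-level vanishing $\Ext^1_{\CO}(L,L')=0$---since any non-split extension would furnish a length-two snowflake, contradicting \Thm{thmcomred}(ii)---transfers to $[\fg,\fg]$ by the same Hochschild--Serre analysis. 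The main delicate point is precisely this $\Ext^1$ comparison: the hypothesis $\mathbb{C}\Delta(L)=\mathbb{C}\Delta$ is exactly what is needed to kill the self-extensions that survive after restricting to $[\fg,\fg]$, while the rest reduces to the snowflake-Serre machinery already established in \Thm{thmcomred}.
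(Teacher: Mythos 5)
Your proposal follows essentially the same route as the paper: the paper's own proof is the single sentence ``Similarly to \Cor{snowcat}'', resting on the preceding remark that $\Ext^1_{[\fg,\fg]}(L,L)=0$ for a snowflake $L$ with $\mathbb{C}\Delta(L)=\mathbb{C}\Delta$ (via Prop.~2.2 of \cite{GK1}), combined with the two-step criterion of \cite{GK1}, Lem.~1.3.1 and the Serre-subcategory and complete-reducibility statements of \Thm{thmcomred}. Your write-up supplies exactly these ingredients (singular vectors from local finiteness of $\fh+\fn^+$ to produce simple submodules, then the self-extension vanishing from the hypothesis $\mathbb{C}\Delta(L)=\mathbb{C}\Delta$ and the cross-extension vanishing from the snowflake machinery), so it matches the intended argument at the paper's own level of detail.
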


\subsection{The case of affine $\fg$}\label{snowaffi}
Let $\fg$ be an affine Lie superalgebra of non-isotropic type.
By~\ref{non-isotropictype} the matrix $B$ is of affine type, so $\Delta_{pr}:=\Delta(\fg_{pr})$
is an affine root system.
We normalize the invariant bilinear form
by $(\alpha|\alpha)=2$ for some
 $\alpha\in\Sigma$. Then
$(\alpha|\alpha)\in \mathbb{Q}_{>0}$ for every $\alpha\in\Sigma$ and thus for
every $\alpha\in\Delta_{re}$.

We denote by $\delta$  the minimal imaginary root in $\Delta$ and by
$K\in\fg$ the canonical central element given by $\lambda(K):=(\lambda|\delta)$.
 If $K$ acts on a $\fg$-module $N$
by $k\Id$, then $k$ is called the {\em level} of $N$.
A module of level $k$ is non-critical if and only if $k\not=-h^{\vee}$, where
$h^{\vee}:=(\rho|\delta)\in\mathbb{Q}_{>0}$.
We set
$$\fh^*_k:=\{\lambda\in\fh^*|\ (\lambda|\delta)=k\}.$$
For any root subsystem $\Delta'$ we introduce the set
$$S(k;\Delta'):=
\{\lambda\in\fh^*_k|\ L(\lambda)\in \CO_{\Delta'}\text{ is a snowflake module}\}.$$

For any $\lambda\in\fh^*, x\in\mathbb{C}$ one has
$$\ch L(\lambda+x\delta)=e^{x\delta}\ch L(\lambda)$$
so $S(k;\Delta')$ is invariant under the shifts by $\mathbb{C}\delta$.

\subsubsection{}
\begin{prop}{snowlevel}
(i) If $\Delta(\lambda)$ is finite and non-empty, then
$(\lambda|\delta)\not\in\mathbb{Q}$. If $\Delta(\lambda)$ is infinite, then
$(\lambda|\delta)\in\mathbb{Q}$ and each connected component of
$\Pi(\lambda)$ is of affine type.

(ii) For any $k\in \mathbb{Q}$ there are finitely many
root subsystems $\Delta'$ such that
$\Delta(\lambda)=\Delta'$ for some $\lambda\in\fh^*_k$.

(iii) If $L(\lambda)$ is a snowflake module and $\Delta(\lambda)$ is infinite, then
$(\lambda+\rho|\delta)\in\mathbb{Q}_{>0}$.
In particular, $L(\lambda)$ is non-critical.

(iv) If $\Delta'\subset W\Pi_{pr}$ is such that
$\mathbb{Q}\Delta'=\mathbb{Q}\Delta$, then for any $k$ the set
$S(k;\Delta')/\mathbb{C}\delta$
is finite.

\end{prop}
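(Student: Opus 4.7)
The plan is to exploit the structure of real roots in the affine root system $W\Pi_{pr}$: each such root can be written $\alpha=\bar\alpha+n\delta$ with $\bar\alpha$ in the underlying finite root system (finitely many directions) and $n\in\mathbb{Z}$. Since $(\delta|\bar\alpha)=(\delta|\delta)=0$ and $\alpha^\vee=2\alpha/(\bar\alpha|\bar\alpha)$, pairing with $\lambda$ gives
$$\lambda(\alpha^\vee)=\lambda(\bar\alpha^\vee)+\frac{2nk}{(\bar\alpha|\bar\alpha)},\qquad k:=(\lambda|\delta),$$
and this single formula will drive all four parts.

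For (i), if $k\notin\mathbb{Q}$ then $2k/(\bar\alpha|\bar\alpha)$ is irrational, so at most one integer $n$ solves $\lambda(\alpha^\vee)\in\mathbb{Z}$ for each $\bar\alpha$; hence $\Delta(\lambda)$ is finite. Conversely, if $k\in\mathbb{Q}$ and some $\alpha\in\Delta(\lambda)$, then $\lambda(\bar\alpha^\vee)\in\mathbb{Q}$, and the resulting linear congruence admits an arithmetic progression of solutions $n$, making $\Delta(\lambda)$ infinite. For the affine-component assertion I combine this with the classification of closed real subsystems of an affine root system: any irreducible infinite subsystem of the real affine root system $W\Pi_{pr}$ is itself of affine type (infiniteness forces an imaginary direction $d_j\delta\in\mathbb{Z}\Pi'_j$). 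For (ii), with $k\in\mathbb{Q}$ fixed, each ratio $2k/(\bar\alpha|\bar\alpha)$ lies in $(1/q_{\bar\alpha})\mathbb{Z}$ for some $q_{\bar\alpha}$ independent of $\lambda$, so for each of the finitely many $\bar\alpha$-directions the set of admissible $n$ is an arithmetic progression whose coset modulo $q_{\bar\alpha}$ takes only finitely many values; thus $\Delta(\lambda)$ takes only finitely many shapes.

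For (iii), by \Thm{thmsnow} we have $(\lambda+\rho)(\beta^\vee)\in\mathbb{Z}_{>0}$ for every $\beta\in\Pi(\lambda)$. By (i) there is an affine component $\Pi'_j\subset\Pi(\lambda)$ whose null root is $d_j\delta$ with $d_j\in\mathbb{Z}_{>0}$ and expansion $d_j\delta=\sum_{\beta\in\Pi'_j}a^j_\beta\beta$, $a^j_\beta\in\mathbb{Z}_{>0}$. Pairing with $\lambda+\rho$ via the invariant form,
$$d_j(\lambda+\rho|\delta)=\sum_{\beta\in\Pi'_j}a^j_\beta\,\frac{(\beta|\beta)}{2}\,(\lambda+\rho)(\beta^\vee)\in\mathbb{Q}_{>0},$$
so $(\lambda+\rho|\delta)\in\mathbb{Q}_{>0}$ and $L(\lambda)$ is non-critical. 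For (iv) the same identity becomes the key constraint: since $\mathbb{Q}\Delta'=\mathbb{Q}\Delta$ contains $\delta$, the set $\Delta'$ is infinite, so by (i) every component of $\Pi(\Delta')$ is affine, giving one such identity per component with right-hand side a fixed positive rational depending only on $k$. The positive integers $(\lambda+\rho)(\beta^\vee)$ are thereby constrained to one of finitely many tuples. Finally, $\mathbb{Q}\Pi(\Delta')=\mathbb{Q}\Delta$ together with the isomorphism $\fh^*\iso\fh$ provided by the invariant form shows that $\{\beta^\vee:\beta\in\Pi(\Delta')\}$ spans $\ker(\delta)\subset\fh$, so the values $\lambda(\beta^\vee)$ determine $\lambda$ modulo $\mathbb{C}\delta$; this proves $S(k;\Delta')/\mathbb{C}\delta$ is finite.

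The main obstacle is the affine-type assertion in (i): one must verify that each infinite connected component of $\Pi(\lambda)$ generates an affine subsystem of $W\Pi_{pr}$, which uses the standard classification of irreducible closed infinite subsystems of an affine real root system. Some care is needed in the super setting to ensure the classification is being applied to $W\Pi_{pr}$ (the real roots of $\fg_{pr}$) rather than to the larger root system of $\fg$. Once this is in place, the remaining parts (ii)--(iv) follow cleanly from the coroot formula above combined with the snowflake characterization of \Thm{thmsnow}.
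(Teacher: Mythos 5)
Your proposal is correct and follows essentially the same route as the paper: your coroot formula $(\bar\alpha+n\delta)^\vee=\bar\alpha^\vee+\tfrac{2n}{(\bar\alpha|\bar\alpha)}\delta$ is exactly the paper's periodicity computation $(sr\delta+\alpha)^\vee=\alpha^\vee+\tfrac{2sr}{||\alpha||^2}\delta$, and your (iii)--(iv) use the same positivity and boundedness arguments via $q\delta\in\mathbb{Z}_{>0}\Pi(\lambda)$, the snowflake criterion of \Thm{thmsnow}, and the observation that the tuple $(\lambda(\beta^\vee))_{\beta\in\Pi(\Delta')}$ determines $\lambda$ modulo $\mathbb{C}\delta$. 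The only cosmetic difference is in (ii), where the paper bounds $\Pi(\lambda)$ inside the finite set $\{\alpha\in\Delta\mid 0<\alpha<q\delta\}$ rather than counting residue classes of arithmetic progressions, but both arguments rest on the same $q\delta$-periodicity of $\Delta(\lambda)$.
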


\begin{proof}
Since $\Delta_{pr}$ is an affine root system, there exists $r\in\mathbb{Z}_{>0}$ such that
$\Delta_{pr}+r\delta=\Delta_{pr}$.
For $s\in\mathbb{Z}$ and $\alpha\in {\Delta}_{re}$
one has
$(sr\delta+\alpha)^{\vee}=\alpha^{\vee}+\frac{2sr}{||\alpha||^2}\delta$
so for $\alpha\in\Delta(\lambda)$ one has
\begin{equation}\label{eq8}
\Delta(\lambda)\cap \{\alpha+\mathbb{Z}r\delta\}=\{sr\delta+\alpha|\ \
\frac{2sr}{||\alpha||^2}(\lambda|\delta)\in\mathbb{Z}\}.\end{equation}

If $(\lambda|\delta)\not\in\mathbb{Q}$, this set is equal to $\{\alpha\}$;
otherwise this set is infinite.
This gives (i).

For (ii) fix  $k\in\mathbb{Q}$. By~(\ref{eq8}) there exists $q\in\mathbb{Z}_{>0}$
  such that for any $\lambda\in\fh^*_k$ one has
  $\alpha\in\Delta(\lambda)$ if and only if $\alpha+q\delta\in\Delta(\lambda)$.
Let $\lambda\in\fh^*_k$ be such that $\Delta(\lambda)$ is non-empty.
Let us show that for each $\alpha\in\Pi(\lambda)$ one has $\alpha<q\delta$.
Indeed, take $\alpha\in\Pi(\lambda)$. There exists $j\geq 0$ such that for
$\alpha':=\alpha-jq\delta$ one has $0<\alpha'<q\delta$; by above
$\alpha'\in\Delta(\lambda)^+$. If $\alpha'\not=\alpha$ then
 $r_{\alpha}\alpha'\in\Delta^+$ (since  $\alpha\in\Pi(\lambda)$);
 this gives $-\alpha-jq\delta\in\Delta^+$,
a contradiction. Thus $\alpha<q\delta$.
Since   $\{\alpha\in\Delta|\ 0<\alpha<q\delta\}$ is finite, this establishes (ii).

For (iii), (iv) we will use the following observation:
if $L(\lambda)$ is a snowflake module, then $(\lambda+\rho|\alpha)\in\mathbb{Q}_{>0}$
for each $\alpha\in\Pi(\lambda)$ (this follows from~\Thm{thmsnow}
 and the fact that $(\alpha|\alpha)\in \mathbb{Q}_{>0}$
for any $\alpha\in \Delta_{re}$).

For (iii) assume that $\Delta(\lambda)$ is infinite. Then
$s\delta\in\mathbb{Z}_{\geq 0}\Delta(\lambda)$ for some $s\in\mathbb{Z}_{>0}$,
that is $s\delta\in\mathbb{Z}_{\geq 0}\Pi(\lambda)$ (see~\Lem{lemPilambda}
(iii)). The above observation implies
$(\lambda+\rho|s\delta)>0$ and (iii) follows.

For (iv) fix $k\in\mathbb{Q}$ and take $q>0$ as above.
Fix $\Delta'$ and set  $\Pi':=\Pi(\Delta')$.
Since $\mathbb{Q}\Pi'=\mathbb{Q}\Pi$ one has
$\{\mu|\ \forall \alpha\in\Pi'\ \ (\mu|\alpha)=0\}=\mathbb{C}\delta$,
 it is enough to show that the set
$$H:=\{\lambda(\alpha^{\vee})|\ \lambda\in S(k;\Delta'),\alpha\in\Pi'\}$$
is finite. By definition of $\Delta(\lambda)$ one has $H\subset \mathbb{Z}$, so it is enough to verify that $H$ is a bounded set.

Take $\lambda\in S(k;\Delta')$ and $\alpha\in\Pi'$. By above,
$(\lambda+\rho|\alpha)>0$,
so $H$ is bounded from below.
By (i) each  connected component of $\Pi'$ is of affine type, so
 by~\ref{FINAFFIND},
$$q\delta=\sum_{\beta\in\Pi'} m_{\beta}\beta,\ \ \ m_{\beta}>0.$$
and thus, using the above observation, we obtain
$$q(k+h^{\vee})=(\lambda+\rho|q\delta)=\sum_{\beta\in\Pi'} m_{\beta}
(\lambda+\rho|\beta)>m_{\alpha}(\lambda+\rho|\alpha).$$
Since $m_{\alpha}>0$, the set $H$ is bounded. This gives (iv).
\end{proof}

\subsubsection{Remark}
It is not hard to classify the possible levels of snowflake modules for a given root subsystem $\Delta'$.
The root subsystems satisfying the assumption $\mathbb{Q}\Delta'=\mathbb{Q}\Delta$ are the most interesting;
for these root subsystems in the case when $\fg$ is an affine Lie algebra
the sets $S(k;\Delta')$ are described in~\cite{KW0}.

\subsubsection{}
\begin{lem}{lemiv}
Let $y\in GL(\fh^*)$ preserves $\Delta$ and the form $(-|-)$ (i.e. $y\Delta=\Delta$ and
$(y\lambda|y\nu)=(\lambda|\nu)$ for $\lambda,\nu\in\fh^*$).
If a root subsystem $\Delta'$ is such that
  $y (\Pi(\Delta'))\subset\Delta^+$, then
$S(k;y\Delta')=y.S(k;\Delta')$  (where $y.\nu:=y(\nu+\rho)-\rho$).
\end{lem}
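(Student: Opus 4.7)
The plan is to show that $\lambda\mapsto y.\lambda$ is a bijection from $S(k;\Delta')$ to $S(k;y\Delta')$, by separately verifying that this map preserves the level $k$, the datum $\Delta(\cdot)$, and the snowflake property (the latter via~\Thm{thmsnow}). The inverse direction will follow by applying the same argument with $y^{-1}$ in place of $y$, after observing that $y^{-1}\Pi(y\Delta')=\Pi(\Delta')\subset\Delta^+$, so the hypothesis is symmetric between $(y,\Delta')$ and $(y^{-1},y\Delta')$.

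First I would check preservation of level and of $\Delta(\cdot)$. Since $y$ preserves $\Delta$ and the form, it permutes $\Delta_{im}=\mathbb{Z}\delta\setminus\{0\}$, hence $y\delta=\pm\delta$; the hypothesis $y\Pi(\Delta')\subset\Delta^+$ together with~\Lem{lemPilambda}(ii) forces $y\delta=\delta$ whenever $\Delta'$ is infinite (since some $q\delta\in\mathbb{Z}_{\geq 0}\Pi(\Delta')$ must then land in $\Delta^+$), while in the remaining case $k$ is irrational by~\Prop{snowlevel}(i) and $y\delta=\delta$ holds because in the affine setting the natural automorphism groups of the root datum all fix $\delta$. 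It follows that $(y.\lambda|\delta)=(\lambda+\rho|y^{-1}\delta)-h^\vee=(\lambda|\delta)=k$. For the root datum, form-invariance gives $(y^{-1}\alpha)^\vee=y^{-1}\alpha^\vee$ for $\alpha\in W\Pi_{pr}$, and combined with the integrality $\rho(\beta^\vee)\in\mathbb{Z}$ for all $\beta\in W\Pi_{pr}$ (which holds since $\rho(\gamma^\vee)=\gamma(\gamma^\vee)\in\mathbb{Z}$ for $\gamma\in\Pi_{pr}$ and $w\rho-\rho\in\mathbb{Z}\Sigma$ for $w\in W$), a direct computation yields
$$(y.\lambda)(\alpha^\vee)-\lambda((y^{-1}\alpha)^\vee)=\rho((y^{-1}\alpha)^\vee)-\rho(\alpha^\vee)\in\mathbb{Z},$$
so $\Delta(y.\lambda)=y\Delta(\lambda)$.

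Next I would handle the snowflake property. By~\Lem{lemPilambda}(ii) and the hypothesis, $y(\Delta')^+\subset\mathbb{Z}_{\geq 0}\,y\Pi(\Delta')\subset\Delta^+$, hence $(y\Delta')^+=y(\Delta')^+$ and consequently $\Pi(y\Delta')=y\Pi(\Delta')$. By~\Thm{thmsnow}, $L(\lambda)$ with $\Delta(\lambda)=\Delta'$ is snowflake iff $(\lambda+\rho)(\gamma^\vee)>0$ for every $\gamma\in\Pi(\Delta')$. For $\gamma\in\Pi(\Delta')$, form-invariance of $y$ gives
$$(y.\lambda+\rho)((y\gamma)^\vee)=\frac{2\,(y(\lambda+\rho)\mid y\gamma)}{(y\gamma\mid y\gamma)}=(\lambda+\rho)(\gamma^\vee),$$
so positivity transfers bijectively between the bases $\Pi(\Delta')$ and $\Pi(y\Delta')$. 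Combining with the preceding paragraph, $\lambda\in S(k;\Delta')$ iff $y.\lambda\in S(k;y\Delta')$, which is the desired equality.

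The principal obstacle is the identification $y\delta=\delta$: the hypotheses do not obviously force it when $\Delta'$ contains no imaginary roots, and one must invoke the structure of the automorphism group of the affine root datum, where all natural candidates for $y$ (Weyl group elements, diagram automorphisms, and their extensions) fix $\delta$. Modulo this point the proof is a routine transport of structure along $y$ together with the characterization of snowflake modules in~\Thm{thmsnow}.
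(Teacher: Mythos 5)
Your proof is correct and takes essentially the same route as the paper's: both rest on $\Delta(y.\lambda)=y\Delta(\lambda)$ (via integrality of $(\rho-y\rho)(\alpha^{\vee})$ on $W\Pi_{pr}$), on $\Pi(y\Delta')=y\Pi(\Delta')$ from \Lem{lemPilambda}, and on the identity $(y.\lambda+\rho)((y\gamma)^{\vee})=(\lambda+\rho)(\gamma^{\vee})$ to transfer the snowflake criterion of \Thm{thmsnow}. On the point you single out as the principal obstacle: the paper's own proof simply does not discuss preservation of the level, so you are not missing an argument that the authors supply; your derivation of $y\delta=\delta$ for infinite $\Delta'$ is sound, and that is the case relevant to the applications (see~\ref{isomofDeltas}, where $y$ lies in the extended Weyl group and hence fixes $\delta$). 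For finite or empty $\Delta'$ the stated hypotheses genuinely do not force $y\delta=\delta$ (e.g.\ $y=-\operatorname{Id}$ preserves $\Delta$ and $(-|-)$ but sends $\delta$ to $-\delta$), so this should be read as an implicit standing assumption on $y$ rather than as a defect of your argument.
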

\begin{proof}
Take $\alpha\in\Delta_{re}$. Recall that
 $\rho(\alpha^{\vee})\in\mathbb{Z}$ (resp., $\rho(\alpha^{\vee})\in\mathbb{Z}+\frac{1}{2}$)
if $\frac{\alpha}{2}\not\in\Delta$ (resp.,  if $\frac{\alpha}{2}\in\Delta$).
Since $y$  preserves $\Delta$ and the form $(-|-)$ one has
 $(\rho-y\rho)(\alpha^{\vee})\in\mathbb{Z}$.  Thus for any $\lambda\in\fh^*$ one has
$$\Delta(y.\lambda)=\Delta(y\lambda+y\rho-\rho)=\Delta(y\lambda)=y\Delta(\lambda).$$
Using~\Lem{lemPilambda} (ii), we see that
the condition
$y\Pi(\Delta')\subset\Delta^+$ gives $\Pi(y\Delta')=y\Pi(\Delta')$.
Since $(y.\lambda+\rho)(y\alpha^{\vee})=
(\lambda+\rho)(\alpha^{\vee})$, this
establishes the assertion.
\end{proof}

\section{Arakawa's Theorem for $\mathfrak{osp}(1|2\ell)^{(1)}$}
In this section we deduce Arakawa's Theorem for
$\mathfrak{osp}(1|2\ell)^{(1)}$
from Arakawa's Theorem for the Lie algebra $\mathfrak{sp}_{2\ell}^{(1)}$.
In~\cite{A} T.~Arakawa proved this theorem  for all non-twisted  affine Lie algebras.
Among non-twisted affine superalgebras (which are not Lie algebras)
only $\mathfrak{osp}(1|2\ell)^{(1)}$  have non-isotropic type.

\subsection{Admissible levels}\label{admil}
Let
$\fg:=\dot{\fg}^{(1)}$  be
a non-twisted  affine Lie superalgebra of non-isotropic type.
Retain notations of~\ref{snowaffi}.
We  denote by  $\dot{\Sigma}$  the base of $\dot{\fg}$ and by
 $\Sigma=\dot{\Sigma}\cup\{\alpha_0\}$  the base of $\fg$.
Let $\Lambda_0\in\fh^*$ be the $0$th fundamental root, i.e.
  $(\Lambda_0|\alpha_0^{\vee})=1$ and
  $(\Lambda_0|\dot{\Sigma})=0$.

The definition of admissible weights in~\cite{KWrat} can be written as follows.

\subsubsection{}\begin{defn}{}
A weight $\lambda\in\fh^*$ is called {\em admissible}
if $\mathbb{Q}\Delta(\lambda)=\mathbb{Q}\Delta$ and

$(\lambda+\rho)(\alpha^{\vee})>0$ for each
$\alpha\in \Pi(\lambda)$.
\end{defn}

In the light of~\Thm{thmsnow}, we
can reformulate this definition as follows: $\lambda$ is admissible
if $\mathbb{Q}\Delta(\lambda)=\mathbb{Q}\Delta$  and $L(\lambda)$ is a snowflake module. Note that  the admissible weights  are non-critical (see~\Prop{snowlevel} (iii)).

\subsubsection{}\begin{defn}{}
The level $k$ is called {\em admissible} if
$k\Lambda_0$ is admissible.\end{defn}

If $k$ is admissible, then
$\Pi(k\Lambda_0)=\dot{\Sigma}\cup\{\alpha_0'\}$, where $\alpha_0'$ is some root not in $\Delta(\dot\fg)$. In fact,  $\alpha_0'=\alpha_0+j\delta$ or $\alpha_0'=-\theta_s+(j+1)\delta$, where $j\geq 0$ and $\theta_s$ is the
maximal short root in ${\Delta}(\dot{\fg})$.

\subsubsection{}\label{gg}
A $[\fg,\fg]$-module  (resp., $\fg$-module) $N$ is called
{\em restricted} if
for every $a\in\dot{\fg}, v\in N$ there exists $n$ such that $(at^m)v=0$
for every $m>n$. The modules in $\CO$ are
 restricted $\fg$-modules.

Let $N$ be a non-critical restricted $[\fg,\fg]$-module of level $k$. The Sugawara construction
equips $N$ with an action of the Virasoro algebra
$\{L_n\}_{n\in\mathbb{Z}}$, see~\cite{Kbook2},
12.8 for details.
Moreover, the $[\fg,\fg]$-module structure on $N$ can be extended to a
$\fg$-module
structure by setting $d|_N:=-L_0|_N$.

For a restricted $\fg$-module the action of $L_0$ and the Casimir element $\Omega$
are related by the formula $\Omega=2(K+h^{\vee})(d+L_0)$. Therefore,
the above procedure assigns to a restricted non-citical $[\fg,\fg]$-module of
level $k$ a
restricted $\fg$-module of level $k$ with the zero action of the Casimir operator.
We denote by $\CO_k$ the full subcategory of of $\CO(\fg)$
consisting of  the modules of level $k$ with the zero action of  the Casimir operator.
{If $L(\lambda)$ is of level $k\not=-h^{\vee}$, then
there exists a unique $x\in\mathbb{C}$ such that  $L(\lambda+s\delta)\in\CO_k$.}

\subsubsection{}
We denote by $Vac^k$ the vacuum module:
$$Vac^k:=M(k\Lambda_0)/\sum_{\alpha\in\dot{\Sigma}} M(r_{\alpha}.k\Lambda_0).$$
The vacuum module has a unique simple quotient; it is isomorphic to $L(k\Lambda_0)$.
Similarly to~\cite{FZ}, the
vacuum module $Vac^k(\fg)$ has a structure of a vertex superalgebra with
\begin{equation}\label{Yxt}
Y(at^{-1}\vac,z)=\sum_{n\in \mathbb{Z}} (at^n)z^{-n-1}\ \text{ for }a\in\dot{\fg};
\end{equation}
we denote this vertex superalgebra by $V^k(\fg)$. One has the natural correspondence
between the ``weak'' $V^k(\fg)$-modules and
 the restricted $[\fg,\fg]$-module of level $k$. For $k\not=-h^{\vee}$
 the category $\CO$ for $V^k(\fg)$ is naturally equivalent  to the category $\CO_k$.
The maximal proper submodule $I(k)$ of  $Vac^k$
is the maximal ideal in the vertex algebra $V^k(\fg)$.
Moreover, if $I(k)$ is generated by a set $E$ as a $\fg$-module, then
$I(k)$ is generated by $E$ as an ideal in $V^k(\fg)$. In particular,
$L(k\Lambda_0)$ inherits a structure of
a vertex superalgebra, which is simple; it is denoted by
$V_k(\fg)$.

\subsubsection{Remark}
By~\Prop{snowlevel} (i),
 $L(k\Lambda_0)$ is snowflake if and only if
$k\not\in\mathbb{Q}$ or $k$ is admissible.
If $k\not\in\mathbb{Q}$, then $Vac^k$ is simple (see~\cite{GKvac})
and so $V^k(\fg)=V_k(\fg)$.

\subsubsection{}\label{ThmAr}
Let $\ft$ be a Lie algebra.
If $L(k\Lambda_0)$ is an integrable module (i.e., $k\in\mathbb{Z}_{\geq 0}$), then the $V_k(\ft)$-modules
correspond to the restricted integrable $\ft$-modules of level $k$,
see~\cite{DLM}, Thm. 3.7; these
modules are completely reducible and the irreducible modules are
the integrable highest weight modules of level $k$ (there are finitely many
such modules and $V_k(\ft)$ is a {\em rational} vertex algebra).
The following Adamovi\'c-Milas conjecture~\cite{AM}  was proven by T.~Arakawa in~\cite{A}.

{\em Theorem (Arakawa, 2014).
Let $k$ be an admissible level for an affine Lie algebra $\ft$.
The $V_k(\ft)$-modules in category $\CO$ are completely reducible and the irreducible modules correspond to $L_{\ft}(\lambda)\in\CO_k$,
 where $\lambda$ belongs to the set of the  admissible weights of level $k$
 with $\Delta(\lambda)\cong \Delta(k\Lambda_0)$; this set is finite}.

\subsubsection{Remark}\label{isomofDeltas}
The formula $\Delta(\lambda)\cong \Delta(k\Lambda_0)$ means that
this root systems are isomorphic as abstract root systems;
by~\cite{KW0} 
 $\Delta(\lambda)\cong \Delta(k\Lambda_0)$
if and only if $\Delta(\lambda)=y\Delta(k\Lambda_0)$, where
$y$ is an element of the extended Weyl group of $\Delta$.

 \subsection{Remark}\label{remara}
We claim that Arakawa's Theorem can be reformulated as follows:

 \begin{thm}{}  If $k$ is admissible,
then the $\CO$-category for $V_k(\ft)$-modules
is a finite direct sum
\begin{equation}\label{snowara}
\bigoplus_{\Delta':\ \Delta'\cong \Delta(k\Lambda_0)} (Snow\cap \CO_k\cap \CO_{\Delta'}),
\end{equation}
where $Snow\cap \CO_k\cap \CO_{\Delta'}$ is the category of snowflake modules in
$\CO_{\Delta'}$, which have level $k$ and are annihilated by the Casimir.
\end{thm}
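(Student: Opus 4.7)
The plan is to recast Arakawa's Theorem (as stated in~\ref{ThmAr}) directly into the language of snowflake modules, where the two sides become two descriptions of the same semisimple category. The first step I would take is to match the simple objects on both sides. By the definition in~\ref{admil} combined with~\Thm{thmsnow}, a weight $\lambda$ is admissible precisely when $\mathbb{Q}\Delta(\lambda)=\mathbb{Q}\Delta$ and $L(\lambda)$ is a snowflake module. Since $k$ is admissible we have $\mathbb{Q}\Delta(k\Lambda_0)=\mathbb{Q}\Delta$, so the condition $\Delta(\lambda)\cong\Delta(k\Lambda_0)$ already forces the rational-hull condition. Hence the simple $V_k(\ft)$-modules in $\CO$ listed by Arakawa are exactly the simple snowflake modules $L(\lambda)\in\CO_k\cap\CO_{\Delta'}$ as $\Delta'$ ranges over the root subsystems with $\Delta'\cong\Delta(k\Lambda_0)$.

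Next I would promote this to the categorical identification. In one direction, Arakawa's complete reducibility shows that any $V_k(\ft)$-module in $\CO$ is a direct sum of such simples, hence lies in the claimed right-hand side. In the other direction, given $N\in\Snow\cap\CO_k\cap\CO_{\Delta'}$ with $\Delta'\cong\Delta(k\Lambda_0)$, the module $N$ is non-critical by~\Prop{snowlevel}(iii), so~\Thm{thmcomred}(ii) decomposes $N$ as a direct sum of simple snowflake modules $L(\mu)$. Since $\CO_{\Delta'}$ is closed under subquotients, each $\Delta(\mu)=\Delta'\cong\Delta(k\Lambda_0)$, so $\mu$ is admissible, and by Arakawa each $L(\mu)$ is killed by the maximal ideal $I(k)\subset V^k(\ft)$. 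Consequently $I(k)$ annihilates $N$ as well, realizing $N$ as a $V_k(\ft)$-module; since morphisms in both categories are just $\ft$-module morphisms, the match of full subcategories follows.

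Finally, the direct sum decomposition indexed by the various $\Delta'$ is the restriction to the snowflake subcategory of the standard block decomposition $\CO=\bigoplus_{\Delta'}\CO_{\Delta'}$ recalled in~\ref{ODelta}, and the indexing set is finite by~\Prop{snowlevel}(ii). The step I expect to require the most care is the reverse inclusion above, namely verifying that every $N\in\Snow\cap\CO_k\cap\CO_{\Delta'}$ is annihilated by $I(k)$; this pivots on complete reducibility of snowflake modules and then applies Arakawa's Theorem to each simple summand, so~\Thm{thmcomred}(ii) is the crucial technical input from this paper. Once that is in place, the remaining bookkeeping is routine.
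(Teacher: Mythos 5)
Your proposal is correct and follows essentially the same route as the paper: identify the simple objects via the reformulation of admissibility through \Thm{thmsnow}, get semisimplicity from \Thm{thmcomred}, and finiteness of the index set from \Prop{snowlevel}(ii). You spell out more explicitly than the paper does the reverse inclusion (that every module in $\Snow\cap\CO_k\cap\CO_{\Delta'}$ is annihilated by $I(k)$, by decomposing into simples and applying Arakawa's Theorem to each summand), but this is a faithful elaboration of the same argument rather than a different approach.
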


 Indeed, by~\Prop{snowlevel} (ii) the sum in~(\ref{snowara}) is finite.
 If $\Delta'\cong \Delta(k\Lambda_0)$, then
 the category
$Snow\cap \CO_k\cap \CO_{\Delta'}$
is semisimple (by~\Thm{thmcomred}) and  the number of equivalence classes of simple objects  in this category is finite
(see~\ref{gg} and~\Prop{snowlevel} (iv)).

\subsubsection{Remark}
Let $\fs=\dot{\fs}^{(1)}$ be a non-twisted affine Lie superalgebra (of
isotropic or non-isotropic type). Then $\dot{\fs}$ can have one, two or three simple components.
 Let $\dot{\fs}^{\#}$ be the ``largest'' simple component of the reductive Lie algebra $\dot{\fs}_{\ol{0}}$ \footnote{By the "largest" we mean that
    a simple Lie algebra
of  rank $n$ is ``larger'' than  a simple Lie algebra  of rank $m$ if
$n>m$ and that $C_n$ is the largest among the non-exception
simple Lie algebras of rank $n$}. View
 ${\fs}^{\#}:=(\dot{\fs}^{\#})^{(1)}$ as
 a subalgebra of  $\fs_{\ol{0}}$. Thm. 5.3.1 in~\cite{GS3} states that

{\em if $L(k\Lambda_0)$ is integrable over  ${\fs}^{\#}$, then the $V_k(\fs)$-modules
correspond to the restricted integrable $\fs$-modules of level $k$,
 which are integrable over ${\fs}^{\#}$}.

\subsection{}
\begin{thm}{thmAr12l}
Let $\dot{\fg}=\mathfrak{osp}(1|2\ell)$ and let $k$ be an admissible level.
The $V_k(\fg)$-modules in the category $\CO$    are completely reducible
and the irreducible modules correspond to $L(\lambda)$, where
$\lambda$ is admissible and $\Delta(\lambda)\cong \Delta(k\Lambda_0)$;
up to isomorphisms,
there are finitely many simple $V_k(\fg)$-modules in the category $\CO$.
\end{thm}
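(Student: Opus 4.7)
Take $\fg=\mathfrak{osp}(1|2\ell)^{(1)}$, whose principal Lie algebra is $\fg_{pr}=\mathfrak{sp}_{2\ell}^{(1)}$; the two share the same Cartan subalgebra $\fh$, the same central element $K$, and the same set of imaginary roots. The plan is to reduce the theorem to Arakawa's Theorem for $\fg_{pr}$ via the principal embedding $\fg_{pr}\subset\fg$, matching the defining ideals of $V_k(\fg)$ and $V_k(\fg_{pr})$ through~\Thm{propAr}.

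First, admissibility descends: since $k$ is admissible for $\fg$, $L(k\Lambda_0)$ is a non-critical snowflake $\fg$-module by~\Thm{thmsnow}, so by~\Thm{corsnowflakes} $L(k\Lambda_0)|_{\fg_{pr}}$ is snowflake for $\fg_{pr}$; by~\Thm{thmcomred}(i) the simple subquotient $L_{pr}(k\Lambda_0)$ is snowflake, and together with $\mathbb{Q}\Delta(k\Lambda_0)=\mathbb{Q}\Delta$ this shows $k$ is admissible for $\fg_{pr}$ as well, making Arakawa's Theorem applicable. Next, applying~\Thm{propAr} to $\lambda=k\Lambda_0$ yields $I(k\Lambda_0)=U(\fg)\cdot I_{pr}(k\Lambda_0)$; passing to $V^k(\fg)=M(k\Lambda_0)/\sum_{\alpha\in\dot\Sigma}M(r_\alpha.k\Lambda_0)$ and using the standard fact that $\fg$-submodules of $V^k(\fg)$ coincide with its vertex algebra ideals, the maximal vertex algebra ideal $J\subset V^k(\fg)$ (with $V_k(\fg)=V^k(\fg)/J$) is the ideal generated by $J_{pr}\subset V^k(\fg_{pr})\subset V^k(\fg)$, where $J_{pr}$ is the maximal ideal of $V^k(\fg_{pr})$.

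Consequently, a weak $V^k(\fg)$-module $M$ is a $V_k(\fg)$-module iff $Y_M(j,z)=0$ for all $j\in J_{pr}$, which, restricting the vertex operators to $V^k(\fg_{pr})$, is the same as $M|_{\fg_{pr}}$ being a weak $V_k(\fg_{pr})$-module. By~\Lem{lemOOpr}, restriction gives a functor from $V_k(\fg)$-modules in $\CO(\fg)$ to $V_k(\fg_{pr})$-modules in $\CO(\fg_{pr})$. For $M\in\CO(\fg)$ a $V_k(\fg)$-module and $L(\nu)$ any simple $\fg$-subquotient of $M$, Arakawa's Theorem for $\fg_{pr}$ guarantees that every simple $\fg_{pr}$-subquotient of $L(\nu)|_{\fg_{pr}}\subset M|_{\fg_{pr}}$ is an admissible snowflake module $L_{pr}(\mu)$ with $\Delta(\mu)\cong\Delta(k\Lambda_0)$. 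By~\Thm{thmcomred}(i), $L(\nu)|_{\fg_{pr}}$ is snowflake for $\fg_{pr}$; by~\Thm{corsnowflakes}, $L(\nu)$ is snowflake for $\fg$. Since the subsystem $\Delta(\nu)\subset W\Pi_{pr}$ is defined via data common to $\fg$ and $\fg_{pr}$ and $\mathbb{Q}\Delta(\nu)=\mathbb{Q}\Delta$, the weight $\nu$ is admissible for $\fg$ with $\Delta(\nu)\cong\Delta(k\Lambda_0)$.

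Every simple subquotient of $M$ being a non-critical snowflake $\fg$-module, $M$ is snowflake by~\Thm{thmcomred}(i), hence completely reducible by~\Thm{thmcomred}(ii). Finiteness of the number of isomorphism classes of simple $V_k(\fg)$-modules in $\CO$ follows from~\Prop{snowlevel}(iv), since the Casimir-zero constraint defining $\CO_k$ fixes the $\mathbb{C}\delta$-freedom in $S(k;\Delta')/\mathbb{C}\delta$. The main obstacle is the vertex-algebra-ideal identification in the second step: translating the $\fg$-module description of~\Thm{propAr} into the claim that $J$ is the vertex algebra ideal generated by $J_{pr}$, and deriving the module-theoretic restriction criterion from it; once this is established, the rest of the argument is a direct application of Arakawa's Theorem combined with the snowflake machinery developed earlier in the paper.
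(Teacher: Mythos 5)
Your proposal is correct and follows essentially the same route as the paper: it reduces to Arakawa's Theorem for $\fg_{pr}=\mathfrak{sp}_{2\ell}^{(1)}$ by using \Thm{propAr} to show that the maximal ideal of $V^k(\fg)$ is generated by the maximal ideal of the subalgebra $V^k(\fg_{pr})$, so that a module in $\CO$ is a $V_k(\fg)$-module exactly when its restriction is a $V_k(\fg_{pr})$-module, and then transfers the snowflake property back and forth via \Thm{corsnowflakes} before concluding with \Thm{thmcomred} and \Prop{snowlevel}. The only cosmetic difference is that the paper routes the final count through the reformulation in~\ref{remara} (invoking \Prop{snowlevel} (ii) as well as (iv)), but the content is the same.
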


\begin{proof}
Our  proof
 is based on the Arakawa's Theorem for $\dot{\fg}_{\ol{0}}=\mathfrak{sp}_{2\ell}$
and~\Thm{propAr}.

One has $\fg_{pr}=\fg_{\ol{0}}=\mathfrak{sp}_{2\ell}^{(1)}$.
The module $Vac^k$ contains a natural copy
of $\fg_{pr}$-vacuum module  $Vac_{pr}^k$ (which is the $\fg_{pr}$-submodule
generated by the highest weight vector in $Vac^k$).

Let $I$  be the maximal proper submodule of $Vac^k$. A restricted $[\fg,\fg]$-module
$N$ correspond to
a $V_k(\fg)$-module if and only if $N$ is annihilated by all fields $a_{(m)}$
with $a\in I$, $m\in\mathbb{Z}$. Since
 $$Vac^k=M(k\Lambda_0)/ \sum_{\alpha\in\dot{\Sigma}}M(r_{\alpha}.k\Lambda_0),$$
\Thm{propAr} implies that $I$ is generated by $I':=I\cap Vac_{pr}^k$.

Note that the vertex algebra $V^k(\fg_{\ol{0}})$ is a subalgebra
of $V^k(\fg)$. The standard reasoning (see, for example,~\cite{AM}, Prop. 3.4)
shows that a restricted $[\fg,\fg]$-module
$N$ correspond to
a $V_k(\fg)$-module if and only if $N$ is annihilated by all fields $a_{(m)}$
with $a\in I'$, $m\in\mathbb{Z}$.
We conclude that a restricted $[\fg,\fg]$-module $N$ of level $k$
is a $V_k(\fg)$-module if and only if $Res_{\fg_{\ol{0}}}^{\fg} N$
is a $V_k(\fg_{\ol{0}})$-module. Thus the $V_k(\fg)$-modules in the
category $\CO$ correspond to  $N\in\CO_k$ such that
$Res_{\fg_{\ol{0}}}^{\fg} N$
is a  $V_k(\fg_{\ol{0}})$-module.

Since $k$ is an admissible level for $\fg$,
 $L(k\Lambda_0)$ is a snowflake module and thus
  $L_{\fg_{\ol{0}}}(k\Lambda_0)$ is a snowflake  $\fg_{\ol{0}}$-module
 (by~\Thm{corsnowflakes}). One has
$$\mathbb{Q}\Delta(k\Lambda_0)=\mathbb{Q}\Delta=\mathbb{Q}\Delta_{\ol{0}},$$
so $k$ is an admissible level for $\fg_{\ol{0}}$.

Take $N\in\CO_k$ and set $N':=Res_{\fg_{\ol{0}}}^{\fg} N$. Recall that
$\Delta(N')=\Delta(N)$.
By above, $N$ is  a  $V_k(\fg)$-module
if and only if $N'$ is a  $V_k(\fg_{\ol{0}})$-module. The latter,
 by Arakawa's Theorem,  is equivalent to the condition that $N'$
 is   a snowflake module of level $k$ with
$\Delta(N')\cong \Delta(k\Lambda_0)$.
By~\Thm{corsnowflakes}, $N'$ is a snowflake module if and only if $N$ is a snowflake module.
We conclude that  $N$ is  a  $V_k(\fg)$-module if and only if
$N$ is a snowflake module of level $k$ with $\Delta(N)\cong \Delta(k\Lambda_0)$,
which gives~(\ref{snowara}). Now the statement follows from~\ref{remara}.
\end{proof}

\subsection{}
Let $\tilde{\CO}^{inf}_k$ be the full category of $\fg$-modules $N$ of level $k$ such that
every $v\in N$ generates a finite-dimensional $\fh+\fn^+$-submodule. (In other words, we replace the condition that $\fh$ acts diagonally by
  the condition  that $\fh$ acts locally finitely.)
Clearly, all simple subquotients of a module $N\in \tilde{\CO}^{inf}_k$
lie in $\CO_k$. Let $\tilde{\CO}^{inf}$ be
the corresponding categories of $V^k(\fg)$-modules and $V_k(\fg)$-modules.

\subsubsection{}
\begin{cor}{Arakawatilde}
Let $\dot{\fg}$ be a simple Lie algebra or $\mathfrak{osp}(1|2\ell)$ and let $k$ be an admissible level.
The $V_k(\fg)$-category $\tilde{\CO}^{inf}$    is semisimple.
\end{cor}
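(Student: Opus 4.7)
The plan is to reduce semisimplicity of the $V_k(\fg)$-category $\tilde{\CO}^{inf}$ to \Cor{tildecat} (which gives complete reducibility over $[\fg,\fg]$) and then upgrade to complete reducibility as $\fg$-modules (equivalently, as $V_k(\fg)$-modules) using the Sugawara construction. Fix an object $N$ of the $V_k(\fg)$-category $\tilde{\CO}^{inf}$.

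First I would show that every simple subquotient of $N$ lies in $\CO_k$. Let $L$ be such a subquotient (still in $\tilde{\CO}^{inf}$, since the defining property is inherited by subquotients). For a non-zero $v\in L$ the space $V:=U(\fh+\fn^+)v$ is finite-dimensional; $\fn^+$ acts nilpotently on it, so by Engel's theorem $V^{\fn^+}\neq 0$. This subspace is $\fh$-stable with locally finite $\fh$-action, hence contains an $\fh$-generalized eigenvector of some weight $\lambda$; iterating the operator $h-\lambda(h)$ (which preserves being $\fn^+$-killed, since $[h-\lambda(h),e_\alpha]=\alpha(h)e_\alpha$) produces an honest $\fh$-eigenvector annihilated by $\fn^+$. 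So $L$ is a simple highest weight module of level $k$, and $L\in\CO_k$. By \Thm{thmAr12l} when $\dot{\fg}=\mathfrak{osp}(1|2\ell)$ and by Arakawa's Theorem (see~\ref{ThmAr}) when $\dot{\fg}$ is a simple Lie algebra, every simple $V_k(\fg)$-module in $\CO_k$ is $L(\lambda)$ with $\lambda$ admissible and $\Delta(\lambda)\cong \Delta(k\Lambda_0)$. By definition of admissibility $\mathbb{Q}\Delta(\lambda)=\mathbb{Q}\Delta$, hence $\mathbb{C}\Delta(L(\lambda))=\mathbb{C}\Delta$, and $L(\lambda)$ is a snowflake module by \Thm{thmsnow}.

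The hypotheses of \Cor{tildecat} are now in force: every $v\in N$ generates a finite-dimensional $(\fh+\fn^+)$-submodule by definition of $\tilde{\CO}^{inf}$, and every simple subquotient of $N$ is a snowflake module $L$ with $\mathbb{C}\Delta(L)=\mathbb{C}\Delta$. Hence $N$ is completely reducible as a $[\fg,\fg]$-module; write $N=\bigoplus_i N_i$ with $N_i$ simple over $[\fg,\fg]$. To upgrade to a $\fg$-decomposition, recall that on a restricted level-$k$ module with $k\neq -h^\vee$ the action of $d$ is $d=-L_0$, where $L_0$ is the Sugawara operator, a locally finite normally ordered expression in the currents $a(n)$, $a\in\dot{\fg}$. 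In particular $L_0$ preserves every $[\fg,\fg]$-submodule of $N$, so each $N_i$ is $\fg$-stable; being simple over $[\fg,\fg]$ it is a fortiori simple over $\fg$, hence a simple $V_k(\fg)$-module. This gives the desired decomposition. The only substantive ingredient is the identification of simples of $\tilde{\CO}^{inf}$ with simples of $\CO_k$ and the Arakawa-type classification; the promotion from $[\fg,\fg]$-semisimplicity to $\fg$-semisimplicity is purely formal once the Sugawara construction is invoked.
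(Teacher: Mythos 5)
Your proof is correct and follows essentially the same route as the paper: identify every simple subquotient of $N$ as a simple $V_k(\fg)$-module in $\CO_k$, invoke \Thm{thmAr12l} (resp.\ Arakawa's Theorem in the Lie algebra case) to see that it is an admissible snowflake module with $\mathbb{C}\Delta(L)=\mathbb{C}\Delta$, and then apply \Cor{tildecat}. The extra details you supply (the Engel-type argument producing a singular vector, and the Sugawara upgrade from $[\fg,\fg]$- to $\fg$-semisimplicity) are correct elaborations of steps the paper leaves implicit.
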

\begin{proof}
Take $N\in \tilde{\CO}^{inf}$ and view $N$ as a $\fg$-module.
Let $L$ be a simple $\fg$-subquotient of $N$. Then $L$ is a simple
$V^k(\fg)$-module which is annihilated by all fields $a_{(m)}$ with $a\in I$,
so $L\in\CO$ is a simple $V_k(\fg)$-module. Combining~\Thm{thmAr12l} and~\Cor{tildecat}
we conclude that $N$ is completely reducible.
\end{proof}

\subsection{Remarks}\label{occasional}
One can give another natural definition of the integral root system
by including odd non-isotropic roots
$$\Delta_{super}(\lambda):=\{\alpha\in\Delta_{re}|\ \lambda(\alpha^{\vee})\text{ is integral and is even if $\alpha$ is odd}\},$$
i.e., $\Delta_{super}(\lambda)_{\ol{0}}=\Delta(\lambda)$ and
$\Delta_{super}(\lambda)_{\ol{1}}=\{\alpha\in\Delta|\ 2\alpha\in\Delta(\lambda)\}$.
It turns out that the Arakawa Theorem does not hold if we substitute $\Delta(\lambda)$ by
$\Delta_{super}(\lambda)$,  see~\ref{ell1}.

\subsubsection{}\label{ell1}
Consider the case $\mathfrak{osp}(1|2)$.
The admissible weights for $\mathfrak{osp}(1|2)$ are described in~\cite{KRW}, Sect. 6.
Let $\lambda\in\fh^*_k$ be an admissible  weight. Then $k$ is an admissible level
and $\Delta(\lambda)\cong\Delta(k\Lambda_0)$ is  of the type $A_1^{(1)}$ (i.e., the Dynkin diagram of
$\Pi(\lambda)$ is of the type $A_1^{(1)}$). Therefore $L(\lambda)$ is a $V_k(\fg)$-module.
The root system
$\Delta_{super}(k\Lambda_0)$ is of type
$B(0|1)^{(1)}$ and the root system $\Delta_{super}(\lambda)$
can be $B(0|1)^{(1)}, A_1^{(1)}, C(2)^{(2)}$. Hence $\Delta_{super}(\lambda)$
is not always isomorphic to $\Delta_{super}(k\Lambda_0)$.

\subsubsection{}
For $\ell>1$ there are admissible weights which have non-admissible levels
(i.e., $\lambda\in\fh^*_k$ is admissible and $k\Lambda_0$ is not admissible) and
there are admissible levels $k$ such that $\Delta(\lambda)\not\cong\Delta(k\Lambda_0)$ for some admissible weights $\lambda\in\fh^*_k$.

\section{Snowflake modules for isotropic type}
When we try to define snowflake modules for Kac--Moody superalgebras of isotropic
type we encounter several problems.

The first problem concerns the definition itself since
$Re^{\rho}$ is not skew-invariant with respect to the Weyl group action introduced in~\ref{Weyldenom}. This means that if we literally extend the definition of snowflake
modules to this case,  then  the trivial module
would not to satisfy this definition. (However, $L(\lambda)$ would satisfy
such definition if $\lambda+\rho$ is a typical weight and
 $(\lambda+\rho)(\alpha^{\vee})>0$ for each $\alpha\in\Pi(\lambda)$,
see~\Prop{cortyp}). This can be remedied by introducing another (in fact, more natural)
action of the Weyl group as
we discuss below in~\ref{disc}.
Another way to deal with this issue is to consider the restriction to
$\fg_{pr}$.
If $\fg$ is finite-dimensional these two
approaches are equivalent.

The second problem appears in the case when $\Delta(N)$ is infinite.
Roughly speaking, snowflake modules exist only for connected $\Pi(N)$.
For example, the infinite-dimensional snowflake modules $N$ with
$\Delta(N)=\Delta_{\ol{0}}$
exist only for twisted affinizations of $\mathfrak{sl}(1|n)$ and
$\mathfrak{osp}(2|2n)$. It makes sense to consider invariance
condition with respect to a suitable root
subsystem of $\Delta(N)$ in the spirit of~\cite{KW}.

Finally, Theorem \ref{thmsnow} does not hold for the isotropic type, although
it holds for ``weakly typical'' weights
$\lambda$ such that
$(\lambda+\rho|\alpha)\neq 0$ for certain
finite set of isotropic $\alpha$.

In this section we assume that $\fg$ has isotropic type.

Retain notations of~\ref{sigmapr}.
From the classification theorem~\cite{H} (see~\S~\ref{isotropictype})
it follows
that  the equality
$$
W\Pi_{pr}=\cup_{\Sigma\in\cB}W_\Sigma\Sigma_{pr}
$$
 holds only for finite-dimensional $\fg$.
 If $\fg$ is infinite-dimensional all $W_\Sigma$ are finite, moreover,
 the set $\cup_{\Sigma\in\cB}W_\Sigma\Sigma_{pr}$ is
 finite while $ W\Pi_{pr}$ is infinite.

\subsection{Snowflake modules for $\dim\fg<\infty$}
Now let $\fg$ be finite-dimensional. In this case
$\fg_{pr}=\fg_{\ol{0}}$ and { the Weyl vector}
$\rho_{pr}=\rho_{\ol{0}}$ is well-defined;   one has
$$\rho_{\ol{0}}=\rho+\frac{1}{2}\sum_{\alpha\in\Delta_{\ol{1}}}\alpha.$$
{Define $\Sigma(\lambda)$ as in~\ref{sigmapr}.}
\subsubsection{}
\begin{lem}{lemuseful}
 For any $\lambda\in\fh^*$, any base
$\Sigma\in\cB$  and every $\beta\in \Sigma(\lambda)$
one has
\begin{equation}\label{usual0}
r_{\beta}\bigl(R_{\ol{0}}e^{\rho_0}\ch L(\lambda)\bigr)=-
R_{\ol{0}}e^{\rho_0}\ch L(\lambda)\ \Longleftrightarrow\
(\lambda+\rho)(\beta^{\vee})>0.\end{equation}
\end{lem}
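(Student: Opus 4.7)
The plan is to reduce the desired equivalence to~\Cor{corchN} by factoring out the odd contribution from $R_{\ol{0}}e^{\rho_0}$. Using $R=R_{\ol{0}}R_{\ol{1}}^{-1}$ and $\rho_{\ol{0}}=\rho+\rho_{\ol{1}}$, where $\rho_{\ol{1}}:=\tfrac12\sum_{\alpha\in\Delta^+_{\ol{1}}}\dim\fg_{\alpha}\cdot\alpha$, I would write
$$R_{\ol{0}}e^{\rho_0}\ch L(\lambda)=\bigl(Re^{\rho}\ch L(\lambda)\bigr)\cdot\bigl(R_{\ol{1}}e^{\rho_{\ol{1}}}\bigr).$$

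The key point is that the odd factor $R_{\ol{1}}e^{\rho_{\ol{1}}}$ is $W$-invariant. Distributing $e^{\rho_{\ol{1}}}$ into the product defining $R_{\ol{1}}$ one rewrites it in the manifestly symmetric form
$$R_{\ol{1}}e^{\rho_{\ol{1}}}=\prod_{\alpha\in\Delta^+_{\ol{1}}}\bigl(e^{\alpha/2}+e^{-\alpha/2}\bigr)^{\dim\fg_{\alpha}},$$
in which each factor is invariant under $\alpha\mapsto-\alpha$, so the product depends only on the unordered pairs $\{\alpha,-\alpha\}$ and on the multiplicities $\dim\fg_{\alpha}$. Since $\fg$ is finite-dimensional, $\fg_{\ol{1}}$ is an integrable $\fg_{\ol{0}}$-module, so $W=W(\fg_{\ol{0}})$ acts on $\Delta_{\ol{1}}$ by permutations preserving multiplicities. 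In particular, $r_{\beta}(R_{\ol{1}}e^{\rho_{\ol{1}}})=R_{\ol{1}}e^{\rho_{\ol{1}}}$ for every $\beta\in\Sigma(\lambda)\subset W\Pi_{pr}\subset\Delta_{\ol{0}}$.

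Combining these two ingredients I obtain
$$r_{\beta}\bigl(R_{\ol{0}}e^{\rho_0}\ch L(\lambda)\bigr)=\bigl(R_{\ol{1}}e^{\rho_{\ol{1}}}\bigr)\cdot r_{\beta}\bigl(Re^{\rho}\ch L(\lambda)\bigr),$$
and since $R_{\ol{1}}e^{\rho_{\ol{1}}}$ is a nonzero element of the integral algebra of formal characters, equation~(\ref{usual0}) is equivalent to $r_{\beta}(Re^{\rho}\ch L(\lambda))=-Re^{\rho}\ch L(\lambda)$, which by~\Cor{corchN} is in turn equivalent to $(\lambda+\rho)(\beta^{\vee})>0$.

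The only substantial point is the $W$-invariance of $R_{\ol{1}}e^{\rho_{\ol{1}}}$; this is exactly where finite-dimensionality enters, and it is consistent with the opening remarks of this section explaining why the naive extension of the definition of snowflake module fails in the affine isotropic setting.
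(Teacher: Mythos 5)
Your factorization $R_{\ol{0}}e^{\rho_{\ol{0}}}\ch L(\lambda)=\bigl(Re^{\rho}\ch L(\lambda)\bigr)\cdot R_{\ol{1}}e^{\rho_{\ol{1}}}$ and the $W$-invariance of $R_{\ol{1}}e^{\rho_{\ol{1}}}=\prod_{\alpha\in\Delta^+_{\ol{1}}}(e^{\alpha/2}+e^{-\alpha/2})^{\dim\fg_{\alpha}}$ are correct, and they do give the implication $\Longleftarrow$ exactly as the paper does: the skew-invariance of $Re^{\rho}\ch L(\lambda)$ supplied by \Cor{corchN} is simply multiplied by the $r_{\beta}$-invariant factor.

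The gap is in the converse, where you cancel $R_{\ol{1}}e^{\rho_{\ol{1}}}$. The identity you need to divide is an identity of formal sums $\sum a_{\nu}e^{\nu}$, and in the space of all such sums multiplication by a nonzero Laurent polynomial is not injective: already $(1+e^{-\alpha})\cdot\sum_{n\in\mathbb{Z}}(-1)^{n}e^{n\alpha}=0$. The ring in which $R_{\ol{1}}$ is genuinely invertible (series supported on finitely many cones $\nu-Q^+$) does not contain $r_{\beta}\bigl(Re^{\rho}\ch L(\lambda)\bigr)$: since $\beta\in\Sigma(\lambda)$ need not be simple, the set $r_{\beta}(\lambda+\rho-Q^+)$ contains the upward ray $r_{\beta}(\lambda+\rho)+\mathbb{Z}_{\geq 0}\beta$, so the only a priori bound on the support of $r_{\beta}(X)+X$ with $X=Re^{\rho}\ch L(\lambda)$ leaves it unbounded in both directions along $\beta$ --- precisely the situation of the counterexample above. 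Thus ``$R_{\ol{1}}e^{\rho_{\ol{1}}}$ is nonzero'' does not license the cancellation, and justifying it is essentially the whole content of $\Longrightarrow$. The paper proves that direction differently: using the $W$-invariance of $R_{\ol{1}}e^{\rho_{\ol{0}}-\rho}$ and the Enright functors of \Cor{corOnu1} it reduces to $\beta\in\Sigma_{pr}$, and then, assuming $(\lambda+\rho)(\beta^{\vee})\leq 0$, it computes the projection $P$ of $e^{-\lambda}R_{\ol{0}}\ch L(\lambda)$ onto $\mathbb{Q}\beta$; by~(\ref{suppN}) this projection equals $P(R_{\ol{1}})$, namely $1$ or $1+e^{-\beta/2}$, whose coefficients are all positive, which is incompatible with $r_{\beta}$-skew-invariance. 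You need an argument of this kind (or an honest justification of the cancellation on the relevant supports) to close the forward implication.
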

\begin{proof}
The implication  $\Longleftarrow $ follows from $W$-invariance of
$R_{\ol{1}}e^{\rho_{\ol{0}}-\rho}$
and~\Cor{corchN}. Since $R_{\ol{1}}e^{\rho_{\ol{0}}-\rho}$ is
$W$-invariant by the same argument as in
the proof of~\Thm{thmchN}, we can use Enright functors to reduce
$\Longrightarrow$ to the case $\beta\in  \Sigma_{pr}$.

Assume that $(\lambda+\rho)(\beta^{\vee})\leq 0$ for $\beta\in\Sigma_{pr}$.
Define the linear map $P$ by
$$P(\sum_{\nu\in \fh^*} a_{\nu}e^{\nu}):=\sum_{j\in \mathbb{Q}} a_{j\beta}e^{j\beta}.$$
By~(\ref{suppN}) one has $P(e^{-\lambda}R\ch L(\lambda))=1$.
Since $\supp(e^{-\lambda}R\ch L(\lambda))$ and $\supp R_{\ol{1}}$ lie
in $-Q^+$, the condition $\beta\in\Sigma_{pr}$ gives
$$P \bigl(e^{-\lambda}R_{\ol{0}}\ch L(\lambda)\bigr)=P(R_{\ol{1}})=\left\{
\begin{array}{ll}
1&\text{ if }\beta/2\not\in\Delta,\\
1+e^{-\beta/2}&\text{ if }\beta/2\in\Delta.\end{array}
\right.$$
and thus  the left-hand side of~(\ref{usual0}) does not hold.
This establishes~(\ref{usual0}).
\end{proof}

\subsubsection{}
As follows from classification of finite-dimensional Kac--Moody
superalgebras, we have the following two cases:
\begin{enumerate}
\item if $\fg$ is type I then $\Pi_{pr}\subset \Sigma$ for some $\Sigma\in\cB$;
  \item if $\fg$ is type II then $\Pi_{pr}=\Pi_1\coprod \Pi_2$  and there  exist $\Sigma_1,\Sigma_2\in\cB$
such that $\Pi_i\subset \Sigma_{i,pr}$ for $i=1,2$.
  \end{enumerate}
In the latter case we
denote by $\rho_1,\rho_2$ the Weyl vectors for $\Sigma_1$ and
$\Sigma_2$ respectively.
We set $W_i:=W(\Pi_i)$
(so $W=W_1\times W_2$).
In the former case we  assume $\Sigma_1:=\Sigma_2:=\Sigma$ and
$\Pi_1:=\Pi_2:=\Pi_{pr}$.

\subsubsection{}
\begin{thm}{}
  Let $\dim\fg<\infty$ and $L$ be a simple highest weight
  $\fg$-module.
  Let $\lambda_i$ be the highest weight of $L$ with respect to $\Sigma_i$.
Then
 $\Res^{\fg}_{\fg_{\ol{0}}} L$ is a snowflake module if and only if
$(\lambda_i+\rho_i)(\beta^{\vee})>0$ for every
$\beta\in\Pi(\lambda_i)\cap W_i\Pi_i$ and $i=1$ (resp., $i=1,2$) for
type I (resp., type II).
 \end{thm}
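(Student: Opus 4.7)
The plan is to combine \Lem{lemuseful} with a careful decomposition of $\Pi(L)$ aligned with the bases $\Sigma_1,\Sigma_2$. First I will unwind the snowflake property: by the remarks in~\ref{remsnowflake}, the restriction $\Res^{\fg}_{\fg_{\ol{0}}}L$ is snowflake if and only if $r_\beta(R_{\ol{0}}e^{\rho_{\ol{0}}}\ch L)=-R_{\ol{0}}e^{\rho_{\ol{0}}}\ch L$ for every $\beta\in\Pi(L)$. Since $\Delta_{\ol{0}}^+$ is independent of $\Sigma\in\cB$ (see~\ref{contra}), both $R_{\ol{0}}$ and $\rho_{\ol{0}}$ are intrinsic to $\fg$, and $\ch L$ is intrinsic, so the expression above depends only on $L$. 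Moreover, the partial order of~\ref{parord} is defined via $\Delta^{++}\subset\mathbb{Q}_{\geq 0}\Pi_{pr}$ and hence is independent of $\Sigma$, so $\Pi(L)=\Pi(\lambda_1)=\Pi(\lambda_2)$.

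For Type II, the fact that $W=W_1\times W_2$ acts blockwise on $W\Pi_{pr}=W_1\Pi_1\sqcup W_2\Pi_2$ yields the disjoint decomposition
\[ \Pi(L)=\bigl(\Pi(\lambda_1)\cap W_1\Pi_1\bigr)\sqcup\bigl(\Pi(\lambda_2)\cap W_2\Pi_2\bigr). \]
Consequently, the required skew-invariance of $R_{\ol{0}}e^{\rho_{\ol{0}}}\ch L$ under all $r_\beta$, $\beta\in\Pi(L)$, splits into two independent conditions, one on each $W_i\Pi_i$. In Type I one has $\Pi_1=\Pi_2=\Pi_{pr}$ and $\Sigma_1=\Sigma_2$ handles the whole of $\Pi(L)$, so only $i=1$ appears.

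The crux is the inclusion $\Pi(\lambda_i)\cap W_i\Pi_i\subset\Sigma_i(\lambda_i)$, which will let me invoke \Lem{lemuseful} with the base $\Sigma_i$ and the highest weight $\lambda_i$. Since $\Pi_i\subset\Sigma_{i,pr}$, I have $W_i\subset W_{\Sigma_i}$, hence $W_i\Pi_i\subset W_{\Sigma_i}\Sigma_{i,pr}$. Any $\beta\in\Pi(\lambda_i)\cap W_i\Pi_i$ therefore lies in $\Delta(\lambda_i)\cap W_{\Sigma_i}\Sigma_{i,pr}$; being indecomposable in $(\Delta(\lambda_i))^+$ it remains indecomposable in the positive part of the smaller subsystem, so by \Lem{lemPilambda} it belongs to the base $\Sigma_i(\lambda_i)$.

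Once this inclusion is in hand, \Lem{lemuseful} applied with $\Sigma=\Sigma_i$, $\lambda=\lambda_i$ delivers, for every $\beta\in\Pi(\lambda_i)\cap W_i\Pi_i$,
\[ r_\beta\bigl(R_{\ol{0}}e^{\rho_{\ol{0}}}\ch L\bigr)=-R_{\ol{0}}e^{\rho_{\ol{0}}}\ch L \ \Longleftrightarrow\ (\lambda_i+\rho_i)(\beta^\vee)>0. \]
Combining these equivalences over the relevant values of $i$ with the decomposition of $\Pi(L)$ above yields both directions of the theorem. The one mildly delicate point will be the inclusion $\Pi(\lambda_i)\cap W_i\Pi_i\subset\Sigma_i(\lambda_i)$; once that is secured, the rest is a direct appeal to \Lem{lemuseful} together with the structural splitting of $W\Pi_{pr}$ in Types I and II.
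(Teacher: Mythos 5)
Your proof is correct and follows essentially the same route as the paper's: both reduce the statement to Lemma~\ref{lemuseful} by splitting $\Pi(L)$ as $\coprod_i\bigl(\Pi(L)\cap W_i\Pi_i\bigr)$ via $W\Pi_{pr}=W_1\Pi_1\coprod W_2\Pi_2$ and observing that each piece lands in $\Sigma_i(\lambda_i)$ so that~(\ref{usual0}) applies with the base $\Sigma_i$. The only difference is cosmetic: you justify the inclusion $\Pi(\lambda_i)\cap W_i\Pi_i\subset\Sigma_i(\lambda_i)$ explicitly via indecomposability, a step the paper leaves implicit.
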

\begin{proof}
The group $W(L)$
is generated by $r_{\alpha}$ with $\alpha\in\Pi(L)$.
For $\fg$ of type I the statement follows from~(\ref{usual0}). Now let
$\fg$ be of type II.
Since  ${W}=W(\Pi_1)\times W(\Pi_2)$ one has
$${W}\Pi_{pr}=W_1\Pi_1\coprod W_2\Pi_2.$$
If $\beta\in W_i\Pi_i$ lies in $\Delta(L)$, then
$\beta\in \Delta_{\Sigma_i}(\lambda_i)$ (since $\Pi_i\subset\Sigma_{i,pr}$). Therefore
$$\Delta(L)=\coprod_{i=1}^2 (\Delta_{\Sigma_i}(\lambda_i)\cap W_i\Pi_i),\ \ \
\Pi(L)=\coprod_{i=1}^2 (\Sigma_i(\lambda_i)\cap W_i\Pi_i).$$
and the assertion  follows from~(\ref{usual0}).
\end{proof}

\subsection{Remarks}
\subsubsection{}
If $\Res^{\fg}_{\fg_{\ol{0}}} L(\lambda)$ is a snowflake module, it is
completely reducible over $\fg_{\ol{0}}$ by~\Thm{thmcomred}.
Sometimes we can have complete reducibility with respect to another
subalgebra of
$\fg$ as one can see from the following
example.

Let $\Res^{\fg}_{\fg_{\ol{0}}} L(\lambda)$ be a snowflake module
for $\fg:=\mathfrak{osp}(2m+1|2n)$. Then $L(\lambda)$  is completely reducible over
$\fg_{\ol{0}}=\mathfrak{o}(2m+1)\times \mathfrak{sp}(2n)$. Note that $\fg$ contains a copy of
$\mathfrak{osp}(1|2n)$; using~\Thm{corsnowflakes} we see that
 $L(\lambda)$ is is completely reducible over
 $\mathfrak{osp}(1|2n)$ (note that $\fg$ does not contain
$\mathfrak{o}(2m+1)\times \mathfrak{osp}(1|2n)$).

\subsubsection{}\label{disc}
Consider the example  $\fsl(2|1)$ with $\Sigma=\{\alpha_1,\alpha_2\}$
with isotropic $\alpha_1,\alpha_2$. We have $\rho=0$ and
 $W_{\Sigma}=\{Id\}$, ${W}=\{Id,r\}$, where $r:=r_{\alpha_1+\alpha_2}$.

Set $x_i:=e^{-\alpha_i}$, we have
$$Re^{\rho}=\frac{1-x_1x_2}{(1+x_1)(1+x_2)}=1+\sum_{j=1}^{\infty}(-1)^j
(x^{j}_1+x^{j}_2).
$$
One has $r(x_1)=x_2^{-1}$ and
$$r(Re^{\rho})=1+\sum_{j=1}^{\infty}(-1)^j (x^{-j}_1+x^{-j}_2)\not=-Re^{\rho}.$$
This shows that if we literally extend the definition of snowflake
modules to $\fsl(2|1)$,  then  the trivial module
would not to satisfy this definition.

If we consider the natural action of $W$ on the ring of rational
functions in $x_1,x_2$, then $r(Re^{\rho})=-(Re^{\rho})$. Note also that
a rational function $Re^{\rho}$ does not depend on the choice of
$\Sigma'\in\cB$ (i.e., $R'e^{\rho'}=\frac{1-x_1x_2}{(1+x_1)(1+x_2)}$
for any $\Sigma'\in\cB$).

\subsubsection{}
The above approach was generalized for symmetrizable affine Lie superalgebras in~\cite{GK}, 2.2.
Using this new action of ${W}$ we can define snowflake modules as modules satisfying the condition
$r_{\alpha}(\ch N)=\ch N$
for $\alpha\in\Pi(N)$. According to this definition $N$ is snowflake if and only if
$\Res^{\fg}_{\fg_{pr}} N$ is a snowflake $\fg_{pr}$-module;
  in particular, $N$ is snowflake if $\Res^{\fg}_{\fg_{pr}} N$ is integrable.
As before, $L(\lambda)$ is a snowflake module if
$\lambda+\rho$ is a  typical weight and
 $(\lambda+\rho)(\alpha^{\vee})>0$ for each $\alpha\in\Pi(\lambda)$.

\section{Index of Notations}
\ref{contra} \hspace{1cm} $\fg(A),\ \Sigma,\ \cB,\ \Delta,\ \Delta_{\ol{0}},\  \Delta_{\ol{1}},\  r_{\alpha}\Sigma$;

\ref{fgB} \hspace{1cm} $\fg_{pr}$, matrix $B$, principal roots; $\Pi_{pr}$;
 \hspace{0.7cm}
\ref{parord}  \hspace{0.2cm} partial order $\leq $ on $\fh^*$;

\ref{Weylgroup}  \hspace{1cm} $W$, $\Delta_{re}$, $\Delta_{im}$, $\ol{\Delta}_{re}$;
 \hspace{0.7cm}
\ref{wbetavee}  \hspace{0.3cm} $\alpha^{\vee}$;

\ref{types} \hspace{1cm} non-isotropic type; isotropic type;
 \hspace{0.7cm}
\ref{FINAFFIND} \hspace{0.2cm} (FIN), (AFF), (IND);

\ref{notat1} \hspace{1cm} $\Delta^+$, $Q^+$;
\hspace{0.7cm}
\ref{useful} \hspace{0.3cm} $\rho$, $w.$;
\hspace{0.7cm}
\ref{Weyldenom} \hspace{0.3cm} $R,\ R_{\ol{0}},\  R_{\ol{1}}$, $\supp$, $w(\sum a_{\nu}e^{\nu})$;

\ref{Deltalambda} \hspace{1cm} root subsystem;  $\Pi(\Delta')$, $\Delta(\lambda)$,
 $\Pi(\lambda)$, $W(\lambda)$; \hspace{1cm}
\ref{friends} \hspace{0.5cm} friendly;

\ref{Nnu} \hspace{1cm} $\supp(N),\ \Delta(N),\ \Pi(N),\  W(N),\ \ch_{\epsilon},\ \ch$;

\ref{COK} \hspace{1cm}  $\fn^+$, $\CO$, $\CO^{fin}$, $\CO^{inf}$, $N^{\#}$;
 \hspace{1cm}
\ref{ODelta} \hspace{0.3cm} $M(\lambda),\ L(\lambda),\ \CO_{\Delta'}, \ \CO^{fin}_{\Delta'}$;

\ref{lambdalambda'} \hspace{1cm} typical weight;
 \hspace{1cm}
\ref{delta} \hspace{0.5cm} critical weight;
\hspace{1cm}
\ref{equiv} \hspace{0.3cm} $\Lambda$;

\ref{modules} \hspace{1cm} critical module, typical module, $\CO_{crit},\   \CO_{noncrit},\ \CO_{typ},\ \CO_{atyp}$;

\ref{dD} \hspace{1cm} $f^a,\  U',\  \dD_a$;\hspace{1cm}
\ref{En1} \hspace{0.5cm} $\cC_a, \cM(b),\ \cM(b)^e$;

\ref{chain} \hspace{1cm} $\cC^{\alpha},\ \cC^{\alpha_1,\ldots,\alpha_s}$;
\hspace{1cm}
\ref{sigmapr} \hspace{0.5cm} $\Sigma_{pr},\ W_{\Sigma},\Sigma(\lambda)$.

\ref{defnsnowflake} \hspace{1cm} snowflake module;

\ref{prochee}  \hspace{1cm} $M_{pr}(\lambda), \ L_{pr}(\lambda)$, $I(\lambda)$, $I_{pr}(\lambda)$, $w\circ$;

\ref{snow}  \hspace{1cm} $Snow$;  \hspace{1cm} \ref{snowaffi} \hspace{0.3cm}
$\fh^*_k,\ S(k;\Delta')$.

\end{document}